\newtheorem{theoremalph}{Theorem}
\newtheorem*{Main Theorem}{Main Theorem}
\newtheorem{Theorem}{Theorem}[section]
\newtheorem*{Theorem A}{Theorem A}
\newtheorem*{Theorem A'}{Theorem A'}
\newtheorem*{Theorem B'}{Theorem B'}
\newtheorem*{Conjecture}{Conjecture}
\newtheorem{Definition}[Theorem]{Definition}
\newtheorem{Proposition}[Theorem]{Proposition}
\newtheorem{Lemma}[Theorem]{Lemma}
\newtheorem{Remark}[Theorem]{Remark}
\newtheorem{Remark-numbered}[Theorem]{Remark}
\newtheorem{Corollary}[Theorem]{Corollary}
\newtheorem*{Claim}{Claim}
\newtheorem{Claim-numbered}[Theorem]{Claim}
 \def\NN{{\mathbb N}}
    \def\cU{{\cal U}}
   \def\cP{{\cal P}}
\newcommand{\loc}{\operatorname{loc}}
\def\dim{\operatorname{dim}}
\def\diam{\operatorname{Diam}}
\def\supp{\operatorname{Supp}}
\newcommand\Prob{\mathbb P}
\newcommand\homrel[1]{\stackrel{\tiny #1}\sim}
\newcommand\htop{h_{\rm top}}
\newcommand\ignore[1]{}
\begin{document}

\title{Continuity properties of ergodic measures of maximal entropy for $C^r$ surface diffeomorphisms}

\author{J\'{e}r\^{o}me Buzzi, Chiyi Luo, and Dawei Yang\footnote
{D. Yang  was partially supported by National Key R\&D Program of China (2022YFA1005801), NSFC 12171348 \& NSFC 12325106, ZXL2024386 and Jiangsu Specially Appointed Professorship. C. Luo was partially supported by NSFC 12501244.}
}

\date{}
\maketitle

\begin{abstract} 
Let $f$ be a $C^r$ surface diffeomorphism with large entropy (more precisely, $h_{\rm top}(f)>\lambda_{\min}(f)/{r}$). 
Then the number of ergodic measures of maximal entropy is upper semicontinuous at $f$.
This generalizes the $C^\infty$ case studied in \cite{BCS22}, answering Question 1.9 there. 

Moreover, the number of such measures is locally constant if and only if every ergodic measure of maximal entropy of $f$ admits an ergodic continuation under small perturbations. 
In this case, the accumulation points of ergodic measures of maximal entropy are themselves ergodic. 
These facts are new, even in the $C^\infty$ case.
\end{abstract}

\tableofcontents	

\section{Introduction}\label{SEC:1}
Entropy plays a central role in understanding chaotic systems. 
Assume that $X$ is a compact metric space and $T:X\to X$ is a continuous map. 
The topological entropy of $T$, denoted by $h_{\rm top}(T)$, quantifies the complexity of a dynamical system by counting how the number of distinguishable orbits grow over time.
A classical theorem of Bogoliubov-Krylov \cite{BoK37} ensures the existence of at least one invariant probability measure.
For such measures, Kolmogorov and Sinai introduced the measure-theoretic entropy, or metric entropy, which quantifies the complexity of a dynamical system from the perspective of an invariant measure.
For a $T$-invariant probability measure $\mu$, we denote by $h_{\mu}(T)$ the metric entropy of $\mu$ with respect to $T$.
The topological entropy and the metric entropy are related by the following variational principle (see \cite{Dina71,Goo71}):
$$h_{\rm top}(T)=\sup\{h_\mu(T):\mu~\text{is}~T\text{-invariant}\}=\sup\{h_\mu(T):\mu~\text{is}~T\text{-ergodic}\}.$$
For systems with positive topological entropy, a special class of invariant measures is given by those for which the metric entropy equals the topological entropy of the system. Such measures are called \emph{measures of maximal entropy} (MME for short). 
Specifically, a $T$-invariant measure $\mu$ is said to be a measure of maximal entropy of $T$ if $h_\mu(T)=h_{\rm top}(T)$.

From now on we consider a $C^r$ surface diffeomorphism $f$, i.e., a $C^r$ smooth diffeomorphism of some compact, boundaryless, two-dimensional manifold $M$ (in this paper $r$ is a real number larger than $1$ (not only an integer), or $r=\infty$, see Remark~\ref{Rem:NonintegerR}).

If $f$ belongs to the large entropy surface setting (see Definition~\ref{Def:LargeEntropy}), Buzzi-Crovisier-Sarig  \cite{BCS22} have shown the finiteness of the number $N_f$ of ergodic MMEs.
This includes all $C^\infty$ diffeomorphisms $f$ of compact, boundaryless surfaces with $\htop(f)>0$, proving a well-known conjecture of Newhouse \cite{New91}.

The existence problem had been solved previously by a celebrated theorem of Newhouse \cite{New89}: all $C^\infty$ diffeomorphisms of compact manifolds have some MMEs.
More recently,  Burguet \cite{Bur24P} has established the existence of MMEs in the same large entropy surface setting, solving Conjecture 1 of \cite{BCS22}.

\medbreak

This article combines the techniques of \cite{BCS22} and \cite{Bur24P} to study how the ergodic MMEs depend on the diffeomorphism in the finite smoothness,  large entropy surface setting. 
Our proofs are carefully crafted to achieve the lower bound in \eqref{eqLargeEntropy}, that is, $\min(\lambda(f),\lambda(f^{-1}))/r$ rather than the larger $\max(\lambda(f),\lambda(f^{-1}))/r$. 

Our results generalize and make more precise the results obtained by \cite{BCS22} for $C^\infty$ diffeomorphisms of surfaces. 
In particular, Theorem~\ref{Thm:uniform-finite} solves the problem stated in \cite[Remark 1.9]{BCS22}. We give a family of examples showing that all drops allowed by Theorem~\ref{Thm:uniform-finite} actually occur. 
We also formulate some open problems in Section~\ref{Sec:Open}.

\subsection{Main results}
Given a diffeomorphism $f:M\rightarrow M$ on a compact  Riemannian manifold $M$, denote by $N_f$ the number of ergodic measures of maximal entropy of $f$, which may be zero, finite, or infinite.
For a compact $f$-invariant set $\Lambda$, one defines
\begin{equation}\label{eq:Lam}
	\lambda^u(\Lambda,f):=\lim_{n\to\infty}\frac{1}{n}\log\sup_{x\in\Lambda}\|D_xf^n\|,~~~\lambda^s(\Lambda,f):=\lim_{n\to\infty}\frac{1}{n}\log\sup_{x\in\Lambda}\|D_xf^{-n}\|.
\end{equation}
By taking $\Lambda=M$, one has important dynamical quantities\footnote{In \cite{LY24}, $\lambda^u(f)$ is denote by $R(f)$; hence $\lambda^s(f)$ is $R(f^{-1})$.}:
\begin{align}
	\lambda^u(f):=\lambda^u(M,f),&~~~\lambda^s(f):=\lambda^s(M,f); \label{eq:lambdaus} \\
	\lambda_{\min}(f):=\min\{\lambda^u(f),\lambda^s(f)\},&~~~\lambda_{\max}(f):=\max\{\lambda^u(f),\lambda^s(f)\}.\label{eq:lambdamin}
\end{align}

\begin{Definition}\label{Def:LargeEntropy}
A $C^r$ diffeomorphism $f$ belongs to the \emph{large entropy surface  setting}, if it acts on a compact, boundaryless two-dimensional manifold and satisfies 
 \begin{equation}\label{eqLargeEntropy}
     h_ {\rm top}(f) > \frac{\lambda_{\min}(f)}{r} .
  \end{equation}
Moreover we assume $r>1$ to be a real number or $r=\infty$ (in which case we set $\lambda_{\min}(f)/\infty:=0$).
\end{Definition}

\begin{Remark}\label{Rem:NonintegerR}
Note that we use the results of \cite[Lemma 12]{Bur24P}, which require $r>1$ to be an integer. 
However, without assuming $r$ is an integer, the reparametrization lemma still holds by following Burguet’s proof (see Luo-Yang \cite[Appendix]{LuY25}).
\end{Remark}

We generalize  to the large entropy surface setting the upper semicontinuity of the number of ergodic MMEs established by \cite[Theorem 4]{BCS22} in the case $r=\infty$. 
Note that this solves the problem in \cite[Remark 1.9]{BCS22}.

\begin{theoremalph}\label{Thm:uniform-finite}
Let $f$ be a $C^r$ diffeomorphism in the large entropy surface setting.
Then there is a  $C^r$  neighborhood $\mathcal U$ of $f$ such that for any $g\in\mathcal U$, one has that
$$1\le N_{g}\le N_{f}<\infty.$$
\end{theoremalph}

In particular, having a unique MME is an open property in the large entropy surface setting.

\begin{Remark}
In the case $r<\infty$, the techniques of \cite{BCS22} only yielded local boundedness, see Remark 1.9 there. 
A reparametrization lemma of Burguet \cite{Bur24P} will be key to our arguments.
\end{Remark}

It is obvious that the upper semicontinuous function $f\mapsto N_f$ cannot be continuous since it would then be locally constant. 
Let us be more precise:

\begin{Proposition}\label{Prop:Construction}
Given any two integers $1\le m\le n$ and any surface $S$, there is a smooth family $(f_t)_{|t|<1}$ of $C^\infty$-diffeomorphisms of that surface $S$, such that $\htop(f_{t})>0$ for all $t$, $N_{f_0}=n$, and $N_{f_t}=m$ for all $t>0$. 
\end{Proposition}

\medbreak

In the large entropy surface setting, we show that the \emph{ergodic} MMEs are continuous whenever their number is locally constant.

\begin{theoremalph}\label{Thm:equal}
Let $f$ be a $C^r$ diffeomorphism in the large entropy surface setting.
For any sequence of diffeomorphisms $f_n$ satisfying $f_n \rightarrow f$ in the $C^r$ topology, the following are equivalent:
 \begin{enumerate}
  \item[(1)] $N_{f_n}=N_f$ for all sufficiently large $n$; 
  \item[(2)] for any ergodic measure $\mu$ of maximal entropy of $f$, there are ergodic measures of maximal entropy $\mu_n$ of $f_n$ such that $\mu=\lim_n\mu_n$.
 \end{enumerate}
\end{theoremalph}

We show that the stability of the number of MMEs implies the stability of ergodicity for measures with entropy converging to the topological entropy.

\begin{theoremalph}\label{Thm:ergodicLimit}
Let $f$ be a $C^r$ diffeomorphism in the large entropy surface setting.
Fix some sequence of diffeomorphisms $f_n$ such that $f_n \rightarrow f$ in the $C^r$ topology. 
Assume that
$$ \forall n\ge1,\quad N_{f_n}=N_f. $$
Then we have (*) given any sequence of ergodic measures $\mu_n$ of $f_n$ with $\lim_n h_{\mu_n}(f_n)=\htop(f)$, any limit point of $(\mu_n)$ is an ergodic mesure of maximal entropy of $f$.
\end{theoremalph}

There is no converse to the above implication: property (*) does not imply that $N_{f_n}=N_f$ for all large $n$. 
See Proposition~\ref{Prop:Example-detailed}.

\begin{Remark}
Even the $C^\infty$ case of Theorem~\ref{Thm:equal} had not been considered before.
The special case of a constant sequence $f_n=f$ in Theorem~\ref{Thm:ergodicLimit} can be deduced from the study of SPR diffeomorphism in \cite{BCS25}. 
Indeed, any surface diffeomorphism with large entropy is SPR by \cite[Theorem 3.19]{BCS25} and therefore the conclusion of Theorem~\ref{Thm:ergodicLimit} holds by \cite[Theorem B]{BCS25}. 
We note that the proof of \cite[Theorem B]{BCS25} is completely different, e.g., it relies heavily on symbolic dynamics.
\end{Remark}

Theorems A, B, and C above will be deduced from the next result which generalizes Proposition~5.4 of \cite{BCS22} to the large entropy, $C^r$ setting with finite $r>1$.
We refer to Section~\ref{Sec:homoclinic-relation} for the notion of homoclinic relation. 

\begin{theoremalph}\label{Thm:HomoRel}
Let $f$ be a $C^r$ diffeomorphism in the large entropy surface setting.
Fix some sequence of diffeomorphisms $f_n$ such that $f_n \rightarrow f$ in the $C^r$ topology. 
Fix some $f_n$-ergodic measures $\mu_n$ converging to some $\mu_{0}$ in the weak $\ast$-topology.

Assume that $\lim_n h_{\mu_n}(f_n)=\htop(f)$. 
Then $\mu_{0}$ is an MME for $f$ and its ergodic decomposition is supported by finitely many hyperbolic measures. 
Let $\nu$ be one of them and let $O$ be some hyperbolic periodic orbit which is homoclinically related to~$\nu$. 

Then, for all large $n$, $\mu_n$ is homoclinically related to the hyperbolic continuation $O_n$ of $O$ with respect to $f_n$.
\end{theoremalph}

\subsection{Some open problems}\label{Sec:Open}
\paragraph{A continuity property for homoclinic classes}
Using \cite{Bur24P} we are able to control the entropy at small scale only near MMEs. 
But it is natural to expect some control as long as the entropy is larger than $\lambda_{\min}(f)/r$, provided that one considers, not invariant measures, but \emph{homoclinic classes of measures}, i.e., equivalence classes for the homoclinic relation among ergodic, hyperbolic measures. Let us recall some definitions (see Section~\ref{Sec:homoclinic-relation} for more background and references).

Let $C\subset\Prob_{\rm erg}(f)$ be some homoclinic class of measures for the diffeomorphism $f$. 
The support of $C\subset\Prob_{\rm erg}(f)$ is:  $\supp(C) := \overline{\bigcup_{\mu\in C} \supp(\mu)}$.
The entropy of $C\subset\Prob_{\rm erg}(f)$ is: $ h(f,C) := \sup \{h(f,\mu):\mu\in C\}$. 
We can now state a generalization of Theorem~\ref{Thm:uniform-finite}.

\medbreak

\begin{Conjecture}
Let $f$ be a $C^r$ diffeomorphism of a compact surface. Given $0<t\le \htop(f)$, let $\mathcal H(f,t)$ be the set of homoclinic classes of measures $C$ satisfying $h(f,C)\ge t$. Let $f_n\to f$ converge in the $C^r$ topology. 
Then, for any $t>\lambda_{\min}(f)/r$, for all large $n$,
 \begin{equation}\label{eq:dropHC}
     \#\mathcal H(f_n,t) \le  \#\mathcal H(f,t)\;.
  \end{equation}
\end{Conjecture}

Furthermore, one should be able to generalize Theorem~\ref{Thm:ergodicLimit} as follows. 
For a diffeomorphism $g$ that is $C^1$-close to $f$, we define the \emph{hyperbolic continuation} of some homoclinic class of measures $C$ of $f$ to be the homoclinic class $C_g$ of $g$ containing all the ergodic hyperbolic measures of $g$ that are homoclinically related to the hyperbolic continuation of some periodic orbit $O$ with $\mu_O\in C$, where  $\mu_O$ is an ergodic measure supported on a hyperbolic periodic orbit $O$.

\begin{Conjecture}
In the above setting, fix some $t>\lambda_{\min}(f)/r$. 
If $\#\mathcal H(f_n,t)=\#\mathcal H(f,t)$ for all large $n$, then, each homoclinic class in $\mathcal H(f_n,t)$ is the hyperbolic continuation of some homoclinic class of $\mathcal H(f,t)$, for all large $n$.
\end{Conjecture}

More precisely, one can relate any drop in \eqref{eq:dropHC} to homoclinic classes droping in entropy below $t$ or to the merging of several homoclinic classes.
More generally, we expect semicontinuity of the part of the spectral decomposition built in \cite{BCS22} above entropy $\lambda_{\min}(f)/r$.

\paragraph{A question about stability}
We have considered stability along a sequence $f_n$. 
There are stronger form of stability and we propose the following.

\paragraph{Question.} 
{\it Let $f$ be a $C^r$ diffeomorphism in the large entropy surface setting. 
	If $N_g=N_f$ for \emph{all} $g$ in a $C^r$ neighborhood of $f$, does it follow that the supports of all ergodic measures of maximal entropy of $f$ are pairwise disjoint?}

\paragraph{Other natural measures}

We are also interested in perturbing other natural measures.
For instance, it has been shown in \cite{BCS22} that for $C^r$ surface diffeomorphism $f$, for any $b>\lambda^u(f)/r$, there are at most finitely many SRB measures whose positive Lyapunov exponents are larger than $b$. 
Thus, we formulate the following conjecture.

\begin{Conjecture}
Given a $C^\infty$ surface diffeomorphism $f$ and $b>0$, there is a $C^\infty$ neighborhood $\mathcal U$ of $f$ such that for any $g\in\mathcal U$,
$$0\le N^{\textrm SRB}_{g}(b)\le N^{\textrm SRB}_{f}(b)<\infty,$$
where $N^{\textrm SRB}_{g}(b)$ is the number of SRB measures of $g$ with metric entropy larger than $b$.
\end{Conjecture}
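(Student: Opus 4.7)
The plan is to mirror the strategy used for measures of maximal entropy in Theorem~\ref{Thm:uniform-finite}, with SRB measures playing the role of MMEs and the positive Lyapunov exponent playing the role of topological entropy. First, on a surface every SRB measure $\mu$ of $g$ satisfies $h_\mu(g)=\lambda^+(\mu,g)$ by the Ledrappier--Young formula, so the condition $h_\mu(g)>b$ becomes a bound on the positive exponent. In particular, $N_f^{\mathrm{SRB}}(b)$ counts ergodic SRB measures of $f$ with $\lambda^+>b$, and the cited result of \cite{BCS22} already yields $N_f^{\mathrm{SRB}}(b)<\infty$: since $f$ is $C^\infty$ one may take $r$ arbitrarily large so that $\lambda^u(f)/r<b$, and their finiteness theorem applies.

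To obtain the upper semi-continuity, I would attach to each ergodic SRB measure $\mu$ of $g$ with $\lambda^+(\mu,g)>b$ a homoclinic class. Using Katok's approximation by horseshoes with unstable Lyapunov exponent close to $\lambda^+(\mu,g)$, one extracts a thick hyperbolic basic set $\Lambda_\mu\subset\supp(\mu)$ and a saddle $P_\mu\in\Lambda_\mu$ whose homoclinic class contains $\supp(\mu)$. One then needs an SRB analogue of the \cite{BCS22} uniqueness statement: inside the homoclinic class of any hyperbolic saddle $P$ with positive Lyapunov exponent larger than some fixed threshold, there is at most one SRB measure. Granting this, since uniformly hyperbolic basic sets persist under $C^\infty$-small perturbations, the continuation map $P_\mu\mapsto P_\mu(f)$ is well defined on a $C^\infty$ neighborhood of $f$ and should produce an injection from ergodic SRB measures of $g$ with $\lambda^+>b$ into the (finite) set of homoclinic classes of $f$ carrying SRB measures with positive exponent at least $b-\e$. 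Combined with upper semi-continuity of positive Lyapunov exponents on hyperbolic pieces and the continuity-of-Lyapunov-exponent tools of Burguet \cite{Bur24P}, this would yield $N_g^{\mathrm{SRB}}(b)\le N_f^{\mathrm{SRB}}(b)$ on a suitable $C^\infty$ neighborhood of $f$.

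The main obstacle, emphasized in the sentence preceding the conjecture, is that SRB measures lack the robustness enjoyed by measures of maximal entropy. For MMEs, upper semi-continuity of the metric entropy combined with the variational principle forces every weak-$*$ accumulation point of ergodic MMEs of $g_n\to f$ to be an MME of $f$; no such mechanism exists for SRB measures, since a weak-$*$ limit of SRB measures need not be absolutely continuous along unstable manifolds, and the Ledrappier--Young density can collapse under arbitrarily small perturbations. Overcoming this would require a quantitative, uniform Pesin theory restricted to the stratum of ergodic measures with positive exponent $>b$, together with a robust SRB-uniqueness-within-a-homoclinic-class theorem that is stable under $C^\infty$ perturbation. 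Both ingredients appear to be new, and their absence is precisely why the conjecture remains open even in the $C^\infty$ topology.
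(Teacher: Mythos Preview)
The statement you are asked to prove is labeled a \emph{Conjecture} in the paper, and the paper provides no proof of it. Indeed, the sentence immediately preceding the conjecture reads: ``It is worth noting that this is not even known in the $C^\infty$ topology. Thus, we formulate the following conjecture.'' There is therefore nothing in the paper to compare your argument against.

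Your proposal is not a proof but a strategy sketch, and you yourself acknowledge this in the final paragraph: the two ingredients you need --- (i) that weak-$*$ limits of SRB measures of nearby systems remain SRB (or at least land in a controlled finite family of homoclinic classes), and (ii) a perturbation-stable uniqueness of SRB measures within a homoclinic class --- are not established, and you correctly identify their absence as the reason the conjecture is open. So your write-up is an honest assessment of the difficulty rather than a proof, and it aligns with the paper's own position that the question is unresolved. One minor correction: the finiteness $N_f^{\mathrm{SRB}}(b)<\infty$ you attribute to \cite{BCS22} is stated there for SRB measures whose \emph{positive Lyapunov exponent} exceeds $b$, which on a surface is equivalent to $h_\mu(f)>b$ via Ledrappier--Young as you note, so that part is fine; but the injection you propose from SRB measures of $g$ into homoclinic classes of $f$ would also require knowing that the homoclinic class of $P_\mu(f)$ actually carries an SRB measure for $f$ with exponent $>b$, which is again the missing robustness.
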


It is worth noting that it is not even known whether $\sup_{g\in\mathcal U} N_g<\infty$.

\subsection{Outline the proof}\label{Sec:Ideas}
The key Theorem~\ref{Thm:HomoRel} studies how homoclinic relations change under perturbations. 
The general strategy of the proof is roughly similar to \cite{BCS22}: 
\begin{enumerate}
        \item one finds a scale at which the entropy can be computed with some arbitrarily small precision;
        \item one covers the support of limit measure $\mu_0$ by $su$-quadrilaterals, i.e., toplogical disks bounded by segments of stable and unstable laminations of $\mu_0$. 
                   This will show that large entropy measures $\mu$ close to $\mu_0$ must give rise to intersections between stable and unstable manifolds;
        \item the large transverse dimensions of invariant foliations associated to measures with large entropy imply that some of these intersections must be  transverse because of a variant of Sard's lemma.
\end{enumerate} 

However, the above steps have to be significantly modified with respect to the proof of \cite[Theorem 4]{BCS22}.

A fundamental difference occurs in Step 1. 
Indeed, there need not be a scale to uniformly approximate the entropy for all measures in contrast to the $C^\infty$ case where the tail entropy vanished.  
Thus, we use Burguet's reparametrizations \cite{Bur24P} in place of \cite{Buzzi-SIM} to get a uniform scale for unstable manifolds of measures with large entropy.

As a consequence,  Step 2 must consider separately the stable and the unstable laminations instead of their intersections (the small topological homoclinic classes in \cite{BCS22}). 
Thus, we show that both stable and unstable manifolds of $\mu$ must topologically cross the boundary of the $su$-quadrilaterals, hence the unstable and stable manifolds of $\mu_0$.

In Step 3, it may be that only the unstable laminations of measures with large entropy have large transverse dimensions (this property was called $u$-thickness in \cite{BCS22}). 
The dynamical Sard Lemma from \cite{BCS22} immediately gives transverse intersections between the unstable lamination of $\mu$ and  the stable manifold making up the stable boundary of the above $su$-quadrilaterals. 
This proves half the homoclinic relation: $\mu\preceq\mu_0$. 

But one cannot get the other half ($\mu_0\preceq\mu$) in a symmetric way since the stable lamination of $\mu$ may have a small transverse dimension. 
We need to use that the unstable boundary of the $su$-quadrilaterals belongs to the unstable \emph{lamination} of $\mu_0$ which does have a large transverse dimension.\footnote{Such an asymmetric argument was not necessary in the proof of Theorem 4 or Proposition 5.4 of \cite{BCS22}, but a related idea was used at the end of the proof of \cite[Theorem 5.2]{BCS22}.}

\subsection{Outline of the paper}\label{Sec:Outline}
We begin by some review of classical results in Section~\ref{Sec:NUH} and of more recent ones by Burguet and by Buzzi-Crovisier-Sarig in Section~\ref{Sec:revisited}.
The core of the paper is in Sections \ref{Sec:MRT}~and~\ref{Sec:MRT-geo}, where we prove the key Theorem~\ref{Thm:main-reduction}, a version of Theorem~\ref{Thm:HomoRel}.
In Section~\ref{Sec:ProofMainThms}, we prove a Corollary~\ref{Cor:LimitDisjoint} of Theorem~\ref{Thm:HomoRel} from which all the other main results follow easily (Theorems \ref{Thm:uniform-finite}, \ref{Thm:equal}, and \ref{Thm:ergodicLimit}).
In the last Section~\ref{Sec:Construction}, we build the examples announced as Proposition~\ref{Prop:Construction}.

\section{Non-uniformly hyperbolic dynamics}\label{Sec:NUH}

We review some classical results from the theory of nonuniform hyperbolic measures and their entropy.

\subsection{Lyapunov exponent}\label{Sec:LYE}

Let $f$ be a $C^1$ diffeomorphism on a compact manifold $M$. 
Recall the Oseledec theorem \cite{Ose68}: for an ergodic measure $\mu$ of $f$, there are finite numbers 
$$\lambda_1(\mu,f)>\lambda_2(\mu,f)>\cdots>\lambda_t(\mu,f)$$
and a $Df$-invariant splitting on a full $\mu$-measure set 
$$E^1\oplus E^2\oplus\cdots\oplus E^t$$
such that $\sum_{i=1}^t \dim E^i=\dim M$, and for any $1\leq i \leq t$ one has
$$\lim_{n\to\pm\infty}\frac{1}{n}\log\|D_{x}f^n|_{E^i(x)}\|=\lambda_i(\mu,f),~\mu\text{-a.e.}$$

Given an invariant measure of $\mu$ of $f$, one denotes
$$\lambda^+(\mu,f):=\lim_{n\to\infty}\frac{1}{n}  \int \log  \|D_xf^n\|{\rm d}\mu(x),~~~\lambda^-(\mu,f):=\lambda^+(\mu,f^{-1}).$$
When one considers a surface diffeomorphism $f$ and an ergodic measure $\mu$, then there is a $Df$-invariant measurable splitting $E^s\oplus E^u$, such that for $\mu$-almost every $x$,
$$\lim_{n\to\pm\infty}\frac{1}{n}\log\|Df^n|_{E^s(x)}\|=\lambda^-(\mu,f),~~~\lim_{n\to\pm\infty}\frac{1}{n}\log\|Df^n|_{E^u(x)}\|=\lambda^+(\mu,f)$$
If $\mu$ is only invariant, but $f$ is also a surface diffeomorphism, one has that
\begin{equation}\label{e.integrate-Lyapunov}
\lambda^-(\mu,f)+\lambda^+(\mu,f)=\int\log|{\rm Det}(D_xf)|{\rm d}\mu(x).
\end{equation}

An ergodic measure $\mu$ is said to be \emph{hyperbolic}, if for $\mu$-almost every point $x$, all Lyapunov exponents of $x$ are non-zero.  Thus, every point $x$ has a splitting $E^s(x)\oplus E^u(x)$ such that all Lyapunov exponents along  $E^s(x)$ are all negative, and  all Lyapunov exponents along  $E^u(x)$ are all positive.
By the Ruelle inequality \cite{Rue78}, one knows that if an ergodic measure $\mu$ of a surface diffeomorphism with positive metric entropy, then $\mu$ is a hyperbolic measure.

\subsection{Stable/unstable manifolds and homoclinic relations}\label{Sec:homoclinic-relation}

From Pesin theory \cite{BP07,Pes76,Pes77}, for a $C^{1+\alpha}$ diffeomorphism $f$, the Pesin unstable manifold of $x$ is defined as
$$W^u(x)=W^u_{\rm Pes}(x):=\{y\in M:~\limsup_{n\to\infty}\frac{1}{n}\log d(f^{-n}(x),f^{-n}(y))<0\},$$
and the Pesin stable manifold of $x$ is defined as
$$W^s(x)=W^s_{\rm Pes}(x):=\{y\in M:~\limsup_{n\to\infty}\frac{1}{n}\log d(f^{n}(x),f^{n}(y))<0\}.$$
It is known that $W^u_{\rm Pes}(x)$ is an immersed submanifold of dimension $\dim E^u(x)$, and $W^s_{\rm Pes}(x)$ is an immersed submanifold of dimension $\dim E^s(x)$. 
These manifolds exist on a set of full measure with respect to any hyperbolic ergodic measure.

We recall the definition of the homoclinic relation for ergodic hyperbolic measures (see \cite{BCS22}).
For two ergodic hyperbolic measures $\mu_1$ and $\mu_2$, we say that $\mu_1 \preceq \mu_2$, if  there are exist $\Lambda_1$ and $\Lambda_2$ such that 
\begin{itemize}
\item $\mu_1(\Lambda_1)>0$, $\mu_2(\Lambda_2)>0$;
\item for any $x\in\Lambda_1$ and $y\in\Lambda_2$, $W^u(x)$ intersects $W^s(y)$ transversely.
\end{itemize}
We say that $\mu_1$ is \textit{homoclinically related with} $\mu_2$, if $\mu_1 \preceq \mu_2$ and  $\mu_2 \preceq \mu_1$. 
Since the relation ``$\preceq$" is transitive ($\mu_1 \preceq \mu_2,\mu_2 \preceq \mu_3~\Rightarrow \mu_1 \preceq \mu_3$ ), the homoclinic relation is an equivalence relation for ergodic hyperbolic measures.

A special homoclinic relation occurs when a single ergodic measure is generated by a periodic orbit.
Suppose $O$ is a hyperbolic periodic orbit and $\mu_{O}$ is the ergodic measure supported on $O$. 
We say that an ergodic hyperbolic measures $\mu$ is homoclinically related with $O$, if $\mu$ is homoclinically related with $\mu_{O}$. 
In this case, for $\mu$-almost every $x$, we have $W^u(x)$ intersect $W^s(O)$ transversely, and $W^s(x)$ intersect $W^u(O)$ transversely.
Moreover, by the inclination lemma (see \cite[Lemma 2.7]{BCS22}), for $\mu$-almost every $x$, we have $W^s(O)$ \textit{accumulates to} $W^s(x)$ and $W^u(O)$ \textit{accumulates to} $W^u(x)$, i.e., 
\begin{itemize}
	\item for any compact discs $D^s\subset W^s(x)$ and $D^u\subset W^u(x)$, there exist $D^s_1,D^s_2,\cdots \subset W^s(O)$ and $D^u_1,D^u_2,\cdots \subset W^u(O)$ such that $D^s_n\rightarrow D^s$ and $D^u_n\rightarrow D^u$ in the $C^1$-topology. 
\end{itemize}
By the invariance of stable and unstable manifolds, this further implies
$${\rm Closure}(W^u({\rm Orb}(x)))={\rm Closure}(W^u(O)),~{\rm Closure}(W^s({\rm Orb}(x)))={\rm Closure}(W^s(O))$$
By \cite[Corollary 3.3]{BCS22},  two distinct ergodic measures of maximal entropy are not homoclinically related.
\begin{Lemma}[\cite{BCS22}, Corollary 3.3] \label{Lem: HES}
	Let $f: M \rightarrow M$ be a $C^{r},r>1$ surface diffeomorphism with positive entropy. 
	Then, for any ergodic measures $\mu$ and $\nu$ of maximal entropy, $\mu$ is homoclinically related with $\nu$ if and only of $\mu=\nu$.
\end{Lemma}

\subsection{Katok's shadowing and its extensions}\label{Sec:Katok-shadowing}
Katok's shadowding lemma \cite{Kat80} connects hyperbolic ergodic measures, hyperbolic periodic orbits, and hyperbolic basic sets.
For hyperbolic ergodic measures of surface diffeomorphisms, \cite{BCS22} provides a comprehensive summary of the extensions of Katok's shadowding lemma.
Below, we briefly outline the key points.

For a $C^{1+\alpha}$ surface diffeomorphism $f$, if $\mu$ is a hyperbolic ergodic measure, then there {are} a sequence of periodic orbits and a sequence of horseshoes\footnote{A horseshoe is an invariant, topologically transitive, infinite, totally disconnected, uniformly hyperbolic  compact set.} ``accumulating" to $\mu$. 

\paragraph{Related to periodic orbits.} There is a sequence of periodic orbits $\{O_n\}$ such that
\begin{itemize}
\item each $O_n$ is homoclinically related with $\mu$;
\item $\mu_{O_n}$ converges to $\mu$, $\lambda^{u}(\mu_{O_n},f)$ converges to $\lambda^{u}(\mu,f)$, $\lambda^{s}(\mu_{O_n},f)$ converges to $\lambda^{s}(\mu,f)$.
\end{itemize}

\paragraph{Related to horseshoes.}
There is an increasing sequence of horseshoes $\{\Lambda_n\}$ with
\begin{enumerate}
\item every ergodic measure supported on $\Lambda_n$ is homoclinically related with $\mu$;
\item if $O$ is a periodic orbit homoclinically to $\mu$, then there exists $n>0$ such that $O\subset \Lambda_{n}$;
\item $h_{\rm top}(f|_{\Lambda_n})$ converges to $h_\mu(f)$;
\item $\lambda^u(\Lambda_n,f)$ converges to $\lambda^u(\mu,f)$, $\lambda^s(\Lambda_n,f)$ converges to $\lambda^s(\mu,f)$ when $\dim M=2$. \label{Ie:2-4} 
\end{enumerate}
The Item \ref{Ie:2-4} can be seen as the main theorem in \cite{Gel16} and it also holds in any dimension, one can see \cite[Theorem 3.3]{ACW21}.

\subsection{Ledrappier-Young and Katok like entropy formula}\label{Sec:LY-Katok}
Let $f$ be a  diffeomorphism on a compact manifold $M$.  
For a compact subset $K\subset M$, $x\in K$, $n\in \mathbb N$ and $\varepsilon>0$, define $(n,\varepsilon)$-Bowen ball at $x$ with respect to $f$ by
$$B_n(x,\varepsilon,f;K):=\{y\in K:~d(f^i(y),f^i(x))<\varepsilon,~\forall 0\leq i<n\}.$$
We sometimes use the simplified notations when there is no confusion:
\begin{itemize}
\item When $K=M$, denote $B_n(x,\varepsilon,f)=B_n(x,\varepsilon,f;M)$;
\item $B_n(x,\varepsilon)=B_n(x,\varepsilon,f)$.
\end{itemize}

A subset $Y\subset K$ is said an $(n,\varepsilon)$-spanning set of $K$, if $K\subset \bigcup_{z\in Y}B_n(z,\varepsilon;K).$
We denote by $r_f(n,\varepsilon,K)$ the minimal cardinality of all possible $(n,\varepsilon)$-spanning sets of $K$.
The topological entropy of $K$ is thus defined to be 
$$h_{\rm top}(f,K):=\lim_{\varepsilon\to 0}\limsup_{n\to\infty}\frac{1}{n}\log r_f(n,\varepsilon,K).$$
Then, the topological entropy of $f$ satisfies $h_{\rm top}(f)=h_{\rm top}(f,M)$.

Let $\mu$ be a hyperbolic ergodic measure of $f$.
Recall the Oseledec theorem in Section \ref{Sec:LYE}.
For each $1\leq i \leq t$ such that $\lambda_i(\mu)>0$. We denote    
\begin{equation}\label{eq:SB}
	E^{u,i}(x)=\bigoplus_{j=1}^{i}E^j(x)
\end{equation}
By unstable manifolds theory \cite{Pes77}, when $f$ is $C^r$, for $\mu$-almost every $x$,
$$W^{u,i}(x):=\left\{y\in M: \limsup_{n\to\infty} \frac{1}{n} \log d(f^{-n}(x),f^{-n}(y))\leq -\lambda_i(\mu) \right\}$$
is a $C^{r}$ immersed submanifold tangent to $E^{u,i}(x)$ in \eqref{eq:SB} and inherits a Riemannian metric from $M$.
Denote this distance by $d_x^{u,i}$. 
With this distance we define $(n,\rho)$-Bowen balls along unstable manifolds by 
$$ V^{u,i}(x,n,\rho):=\left\{y\in W^{u,i}(x): d_{f^j(x)}^{u,i}(f^j(x),f^j(y))<\rho,~\forall 0\leq  j <n \right\}.$$

From \cite{LeS82} and \cite{LeY85}, there exists a measurable partition $\xi$ subordinate to $W^{u,i}$ with respect to $\mu$, meaning that for $\mu$-almost every point $x$, one has $\xi(x)\subset W^{u,i}(x)$ and $\xi(x)$ contains an open neighborhood of $x$ in $W^{u,i}(x)$.
From Rokhlin \cite{Rok67}, for the measurable partition $\xi$, there is a family of conditional measures $\{ \mu_{\xi(x)}\}$.

As in Ledrappier-Young \cite{LeY85}, the partial entropy along strong unstable manifolds $W^{u,i}$ is defined to be
\begin{equation} \label{eq:LY}
	h_{\mu}^{i}(f)=\lim_{\rho \rightarrow 0} \liminf_{n\rightarrow \infty} -\frac{1}{n} \log \mu_{\xi(x)}(V^{u,i}(x,n,\rho))=\lim_{\rho \rightarrow 0}\limsup_{n\rightarrow \infty} -\frac{1}{n} \log \mu_{\xi(x)}(V^{u,i}(x,n,\rho)).
\end{equation}
These limits exist and are constant $\mu$-almost everywhere. 
Note that if $\lambda^{i}(\mu)>0$ and $\lambda^{i+1}(\mu)\leq 0$, then $h_{\mu}^{i}(f)=h_{\mu}(f)$;
For each probability measure $\nu$ on $M$ and each $\delta>0$, we define 
$$r_{f}(n,\varepsilon,\nu,\delta):=\inf \{r_f(n,\varepsilon,Z):~ Z\subset M,~\nu(Z)>\delta\}.$$

\begin{Theorem}\label{Thm:Entropy-bound}
	Let $f,\mu,i,$ and $\xi$ be chosen as described above, then we have\footnote{Item (1) is not needed for the purposes of this paper.}
	\begin{enumerate}
		\item[(1)] 
		for $\mu$-almost every $x$ and every $\delta>0$ we have 
		$$h_{\mu}^{i}(f)\geq \lim_{\varepsilon\rightarrow 0}\limsup_{n\rightarrow +\infty} \frac{1}{n}\log r_{f}(n,\varepsilon, \mu_{\xi(x)},\delta),$$
		\item[(2)]  for every $\alpha>0$ and every $\delta_1,\delta_2\in (0,1)$, there exists a subset $K$ with $\mu(K)>1-\delta_1$, such that for every $x\in K$ and every $Z\subset \xi(x)$ with $\mu_{\xi(x)}(Z)>\delta_2$, one has
		$$h_{\mu}^{i}(f)\leq \lim_{\varepsilon\rightarrow 0}\liminf_{n\rightarrow +\infty} \frac{1}{n}\log r_{f}(n,\varepsilon, Z)+\alpha.$$
	\end{enumerate}
	
\end{Theorem}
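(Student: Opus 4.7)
The proof is a Brin--Katok-type argument applied to the conditional measures $\mu_{\xi(x)}$ along the strong unstable leaves. The Ledrappier--Young definition \eqref{eq:LY} already expresses $h_\mu^i$ as the exponential decay rate of intrinsic Bowen balls $V^{u,i}(y,n,\rho)$, so the task reduces to comparing these with the ambient Bowen balls $B_n(y,\varepsilon,f)$. Since $W^{u,i}$ carries the induced Riemannian metric, one has the trivial inclusion $V^{u,i}(y,n,\rho)\subset B_n(y,\rho,f)$, which will drive Item~(1). The converse inclusion requires Pesin theory: on a Pesin block $P_\ell$ of $\mu$-measure close to $1$ the leaves of $W^{u,i}$ are uniformly $C^1$-embedded, giving a bi-Lipschitz comparison $d^{u,i}\le C_\ell\,d$ on $W^{u,i}_{\rm loc}$ of uniform local size $r(\ell)$; this will drive Item~(2).

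For Item~(1), fix $\alpha>0$ and pick $\rho<\varepsilon$ so that both $\liminf_n$ and $\limsup_n$ in \eqref{eq:LY} lie within $\alpha/2$ of $h_\mu^i$ at scale $\rho$. Applying Egorov to the a.e.\ convergence defining $h_\mu^i$, for $\mu$-a.e.\ $x$ one can extract a subset $Y_x\subset\xi(x)$ with $\mu_{\xi(x)}(Y_x)>\delta$ and $N_0$ such that
$$\mu_{\xi(x)}\bigl(V^{u,i}(y,n,\rho/2)\bigr)\ge e^{-n(h_\mu^i+\alpha)}\quad\text{for every }y\in Y_x\text{ and every }n\ge N_0.$$
Take a maximal $(n,\rho)$-separated set $\{y_1,\dots,y_N\}\subset Y_x$ in the metric $d^{u,i}$. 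Maximality forces $Y_x\subset\bigcup_j V^{u,i}(y_j,n,\rho)$, while separation makes the half-radius balls $V^{u,i}(y_j,n,\rho/2)$ pairwise disjoint, so the mass estimate yields $N\le e^{n(h_\mu^i+\alpha)}$. The trivial inclusion $V^{u,i}(y_j,n,\rho)\subset B_n(y_j,\varepsilon,f)$ then shows $\{y_j\}$ ambiently $(n,\varepsilon)$-spans $Y_x$, so $r_f(n,\varepsilon,\mu_{\xi(x)},\delta)\le e^{n(h_\mu^i+\alpha)}$. Item~(1) follows by passing to $(1/n)\log$, $\limsup_n$, $\lim_{\varepsilon\to 0}$, and $\alpha\to 0$.

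For Item~(2), fix $\alpha>0$ and $\delta_1,\delta_2\in(0,1)$, choose $\ell$ large so that $\mu(P_\ell)$ is very close to $1$, and combine Egorov, Fubini and Chebyshev to produce $K$ with $\mu(K)>1-\delta_1$ on which (i) the Pesin block structure holds uniformly, (ii) $\mu_{\xi(x)}(\xi(x)\cap K)>1-\delta_2/2$ for every $x\in K$, and (iii) for $y\in K\cap\xi(x)$, $\varepsilon\le\varepsilon_0$ and $n\ge N_0$ one has
$$\mu_{\xi(x)}\bigl(V^{u,i}(y,n,C_\ell\varepsilon)\bigr)\le e^{-n(h_\mu^i-\alpha/2)}.$$
The key geometric input is the inclusion $B_n(y,\varepsilon,f)\cap\xi(x)\subset V^{u,i}(y,n,C_\ell\varepsilon)$ for $y\in K\cap\xi(x)$, derived from the bi-Lipschitz comparison on $W^{u,i}_{\rm loc}(f^jy)$ at the times $f^jy$ returns to the Pesin block, combined with the backward contraction of $d^{u,i}$ along $W^{u,i}$ to cover the intermediate times. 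Granting this, for $x\in K$ and any $Z\subset\xi(x)$ with $\mu_{\xi(x)}(Z)>\delta_2$, pass to $Z\cap K$ (of conditional mass $>\delta_2/2$); for any ambient $(n,\varepsilon)$-spanning set $\{y_j\}_{j=1}^N$ of $Z$, and each $j$ with $B_n(y_j,\varepsilon,f)\cap Z\cap K\ne\emptyset$, recenter at some $y_j'$ in this intersection. Then
$$\tfrac{\delta_2}{2}<\mu_{\xi(x)}(Z\cap K)\le\sum_j\mu_{\xi(x)}\bigl(V^{u,i}(y_j',n,2C_\ell\varepsilon)\bigr)\le N\,e^{-n(h_\mu^i-\alpha)},$$
so $N\ge(\delta_2/2)\,e^{n(h_\mu^i-\alpha)}$. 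Item~(2) follows by taking $(1/n)\log$, $\liminf_n$, $\lim_{\varepsilon\to 0}$, and $\alpha\to 0$.

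The principal difficulty is the geometric inclusion used in Item~(2): propagating the bi-Lipschitz comparison from the initial time to every iterate $0\le j<n$, despite $f^jy$ visiting $P_\ell$ only with positive density and the local unstable size at the remaining times being merely a tempered function of $j$. The standard remedy, which we follow, is to exploit the backward contraction of $d^{u,i}$ along $W^{u,i}$ to pull the estimate back from the latest return to $P_\ell$, and to shrink $K$ further so that the tempered Pesin bounds are uniform on $K$; this is technical but routine in the Pesin framework.
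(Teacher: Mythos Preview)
Your argument for Item~(1) is correct and essentially coincides with the paper's: both use the inclusion $V^{u,i}(y,n,\rho)\subset B_n(y,\rho)$ to get a lower bound on the conditional measure of ambient Bowen balls, and then a disjointness/separation count to bound the spanning number.

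For Item~(2) there is a genuine gap. The inclusion $B_n(y,\varepsilon)\cap\xi(x)\subset V^{u,i}(y,n,C_\ell\varepsilon)$ on which your argument rests is precisely the reverse of \eqref{eq:twoballs}, and the paper explicitly remarks that this reverse ``may not hold''. Your justification does not establish it: at a return time $j'$ to the Pesin block the bi-Lipschitz comparison $d^{u,i}\le C_\ell\, d$ is only available on the \emph{local} leaf $W^{u,i}_{r(\ell)}(f^{j'}y)$, and nothing forces $f^{j'}z$ to lie there --- the global leaf can revisit the ambient ball $B(f^{j'}y,\varepsilon)$ in components intrinsically far from $f^{j'}y$. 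Between returns, forward iteration expands the intrinsic arc from $y$ to $z$ exponentially, so it may exit the local leaf long before the next return; backward contraction helps only once you already know the point is in the local leaf at a later time, which is exactly what is in question. This is not routine Pesin bookkeeping, and it is why the paper takes a different route.

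The paper avoids the leafwise comparison entirely. It invokes Proposition~\ref{Prop:Two Balls} (from \cite{LY24}), which bounds $h_\mu^i$ from above by the growth rate of $\#\{P\in\cP^n:P\cap K\cap\Sigma\cap\xi(x)\neq\emptyset\}$ for any finite partition $\cP$ of small diameter. One then picks $\cP$ with $\mu(\partial\cP)=0$ and $\varepsilon$ so small that the $\varepsilon$-neighborhood $U_\varepsilon$ of $\partial\cP$ has tiny $\mu$-measure; by the ergodic theorem, points in a further good set $K_2$ visit $U_\varepsilon$ with frequency at most $\alpha'$, so each ambient ball $B_n(y,\varepsilon)$ with $y\in K_2$ is covered by at most $e^{n\alpha/2}$ atoms of $\cP^n$. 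Covering $Z\cap K'$ by $r_f(n,\varepsilon,Z)$ such balls and combining with Proposition~\ref{Prop:Two Balls} yields $h_\mu^i\le\liminf_n\frac{1}{n}\log r_f(n,\varepsilon,Z)+\alpha$ directly, with no intrinsic-versus-ambient comparison on the leaf.
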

\begin{proof}[Proof of Theorem \ref{Thm:Entropy-bound} (1)]
	Note that for $\mu$-almost every $x$ we have 
	\begin{equation}\label{eq:twoballs}
		\mu_{\xi(x)}(V^{u,i}(x,n,\rho))\leq \mu_{\xi(x)}(B_n(x,\rho)),~\forall \rho>0.
	\end{equation}
	By \eqref{eq:LY}, there exists $X$ with $\mu(X)=1$ such that for every $x\in X$,  one has
	$$h_{\mu}^{i}(f)\geq \lim_{\rho \rightarrow 0} \limsup_{n\rightarrow \infty} -\frac{1}{n} \log \mu_{\xi(x)}(B_n(x,\rho)).$$
	Choose $X'\subset X$ such that  $\mu(X')=1$ and  $\mu_{\xi(x)}(X)=1$ for every $x\in X'$. 
	Fix some $\alpha>0$ and $x\in X'$. 
	For $\varepsilon>0$ and $y\in \xi(x)\cap X$, since $\xi(x)=\xi(y)$, we have that
	$$\limsup_{n\rightarrow \infty} -\frac{1}{n} \log \mu_{\xi(x)}(B_n(y,\varepsilon))=\limsup_{n\rightarrow \infty} -\frac{1}{n} \log \mu_{\xi(y)}(B_n(y,\varepsilon))\leq h_{\mu}^{i}(f).$$
	For every $k>0$, consider
	$$A_{k}:=\left\{y\in  \xi(x)\cap X: \mu_{\xi(x)}(B_n(y,\varepsilon))\geq \exp (-n(h_{\mu}^{i}(f)-\alpha)),  ~\forall n\geq k \right\}.$$
	Then, we have $\mu_{\xi(x)}(A_k)\rightarrow 1$ as $k\rightarrow \infty$.
	
	For each $\delta>0$, choose $k$ large enough such that $\mu_{\xi(x)}(A_k)>\delta$.
	For $n>k$,  since for every $y\in A_k$ we have $\mu_{\xi(x)}(B_n(y,\rho))\geq \exp (-n(h_{\mu}^{i}(f)-\alpha))$, there are at most $\exp (n(h_{\mu}^{i}(f)+\alpha))$ numbers of disjoint $(n,\varepsilon)$-Bowen balls with center in $A_k$, and the same number of $(n,2\varepsilon)$-Bowen balls can cover $A_k$. Therefore, we have 
	$r_{f}(n,2\varepsilon,\mu_{\xi(x)},\delta)\leq \exp (n(h_{\mu}^{i}(f)+\alpha))$.
	
	Consequently, for every $x\in X'$ we have
	$$\lim_{\varepsilon \rightarrow 0}\limsup_{n\rightarrow +\infty} -\frac{1}{n}\log r_{f}(n,\varepsilon, \mu_{\xi(x)},\delta)\leq h_{\mu}^{i}(f)+\alpha.$$
	By the arbitrariness of $\alpha$, we complete the proof of Theorem \ref{Thm:Entropy-bound} (1).
\end{proof}

Since the reverse inequality of \eqref{eq:twoballs} may not hold, we introduce the following propositions, see \cite[Proposition 2.1, Proposition 2.2]{LY24}.
\begin{Proposition}\label{Prop:Two Balls}
	For every $\alpha>0$ and every $\delta\in(0,1)$, there exists $K\subset M$ with $\mu(K)>1-\delta$ and $\rho:=\rho_K>0$, such that for every $x\in K$, every measurable set $\Sigma\subset W^{u,i}_{{\rm loc}}(x)$ with $\mu_{\xi(x)}(\Sigma\cap K)>0$, and every finite partition $\cP$ with $ {\rm Diam}(\cP)<\rho$, one has
\begin{equation}\label{eq:LimK}
	h_{\mu}^{i}(f)\le\liminf_{n\to\infty}\frac{1}{n} \log \{P\in \cP^n: P\cap K \cap \Sigma \cap \xi(x)\neq \emptyset \}+\frac{\alpha}{2},
\end{equation}
where $\cP^n:=\bigvee_{j=0}^{n-1}f^{-j}\cP$.
\end{Proposition}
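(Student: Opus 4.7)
The plan is to combine the Ledrappier--Young partial-entropy formula \eqref{eq:LY} with a Pesin-block/Egorov argument, and to conclude via a simple counting inequality on atoms of $\cP^n$.

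First, I would use \eqref{eq:LY} to fix $\rho_0>0$ small enough that for $\mu$-almost every $y$,
$$\liminf_{n\to\infty} -\frac{1}{n}\log \mu_{\xi(y)}(V^{u,i}(y,n,\rho_0)) \ge h_\mu^i(f) - \frac{\alpha}{4}.$$
By Egorov's theorem, this decay is uniform on a set $K_0$ with $\mu(K_0) > 1-\delta/2$: there exists $N_0 \in \NN$ such that $\mu_{\xi(y)}(V^{u,i}(y,n,\rho_0)) \le \exp(-n(h_\mu^i(f)-\alpha/4))$ for every $y \in K_0$ and every $n \ge N_0$. Next, I would apply Pesin theory to shrink $K_0$ to a set $K$ with $\mu(K) > 1-\delta$ such that for every $y \in K$, the local unstable manifold $W^{u,i}_{\loc}(y)$ has uniform intrinsic radius, uniformly bounded curvature and uniformly transverse angle to $E^s$, and the intrinsic distance $d_y^{u,i}$ is comparable to the ambient distance on $W^{u,i}_{\loc}(y)$ within some scale $r_0$, with a uniform constant $C_0 \ge 1$. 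Set $\rho_K := \rho_0/(2C_0)$, shrinking further if needed to stay inside the local charts.

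Now fix $x \in K$, a finite partition $\cP$ with ${\rm Diam}(\cP) < \rho_K$, and $\Sigma \subset W^{u,i}_{\loc}(x)$ with $\eta := \mu_{\xi(x)}(\Sigma \cap K) > 0$. Let $\cN_n := \{P \in \cP^n : P \cap K \cap \Sigma \cap \xi(x) \neq \emptyset\}$, and for each $P \in \cN_n$ pick $y_P \in P \cap K \cap \Sigma \cap \xi(x)$. The partition-diameter hypothesis gives $P \subset B_n(y_P, \rho_K)$. The key step is to upgrade this ambient Bowen-ball containment to the intrinsic unstable containment
$$P \cap \xi(x) \subset V^{u,i}(y_P, n, \rho_0).$$
Granting this, the Egorov estimate applied at $y_P \in K \subset K_0$ yields
$$\mu_{\xi(x)}(P \cap K \cap \Sigma \cap \xi(x)) \le \exp(-n(h_\mu^i(f) - \alpha/4)) \quad \text{for every } n \ge N_0.$$
Summing over $\cN_n$ gives $\eta \le |\cN_n| \cdot \exp(-n(h_\mu^i(f) - \alpha/4))$, and taking $\log/n$ and $\liminf$ produces $\liminf_n \frac{1}{n}\log|\cN_n| \ge h_\mu^i(f) - \alpha/4 \ge h_\mu^i(f)-\alpha/2$, as required.

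The main obstacle is the distance upgrade described above: $y_P$ lies in $K$ but its forward iterates $f^j(y_P)$ typically leave $K$, so the intrinsic-versus-ambient comparison along $W^{u,i}$ is not a priori uniform along the orbit. The remedy I would adopt is the standard one: enlarge $K$ to a tempered Pesin set so that the intrinsic/ambient comparison constant at $f^j(y_P)$ grows at most subexponentially in $j$, then absorb that subexponential loss into the choice of $\rho_K$ using the genuine exponential expansion rate on $E^{u,i}$. This is precisely the kind of bounded-distortion argument carried out in \cite[Propositions 2.1 and 2.2]{LY24}, and the same strategy applies verbatim here.
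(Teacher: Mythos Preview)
The paper does not supply its own proof of this proposition: it simply refers the reader to \cite[Propositions 2.1 and 2.2]{LY24}. Your outline is the standard strategy---Ledrappier--Young local-entropy bound \eqref{eq:LY}, Egorov uniformisation on a Pesin block, and the counting inequality on $\cP^n$-atoms---and you correctly isolate the single genuine difficulty: upgrading the ambient containment $P\subset B_n(y_P,\rho_K)$ to the intrinsic containment $P\cap\xi(x)\subset V^{u,i}(y_P,n,\rho_0)$ when the forward orbit of $y_P$ need not remain in $K$. Since your proposed resolution (tempered Pesin charts with subexponentially degrading geometry, absorbed against the genuine exponential expansion on $E^{u,i}$) is exactly what is carried out in \cite{LY24}, and you explicitly invoke that reference at the key step, your proposal coincides with the paper's own treatment.

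One small quantifier issue to tidy up: in your first step you write that the bound $\liminf_n -\tfrac1n\log\mu_{\xi(y)}(V^{u,i}(y,n,\rho_0))\ge h_\mu^i(f)-\alpha/4$ holds for $\mu$-a.e.\ $y$ at a single $\rho_0$. A priori \eqref{eq:LY} only gives, for each $y$, a threshold $\rho_0(y)$ below which this holds; a uniform $\rho_0$ valid on a set of measure $>1-\delta/4$ is what you actually need and is obtained by the same Egorov/Lusin reduction you already invoke. This is cosmetic and does not affect the argument.
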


We now give the proof of Theorem \ref{Thm:Entropy-bound} (2).
\begin{proof}[Proof of Theorem \ref{Thm:Entropy-bound} (2)]
	For  $\alpha>0$ and $\delta_1,\delta_2>0$, let $\delta=\min\{\delta_1,\delta_2\}>0$. 
	We choose a subset $K_1$ and $\rho>0$ such that $\mu(K_1)>1-\frac{\delta^2}{2}$, and satisfying the conclusion of  Proposition \ref{Prop:Two Balls}.

	Choose a finite partition $\cP$ with $\mu(\partial \cP)=0$ and $\diam(\cP)\leq \rho$. 
	For $\varepsilon>0$ we define 
	$$U_{\varepsilon}=\{x: B(x,\varepsilon)~\text{is not contained in}~\cP(x)\}.$$ 
	Note that $U_{\varepsilon}$ is decreasing and $\mu(U_{\varepsilon})\rightarrow \mu(\partial \cP)=0$ whenever $\varepsilon\rightarrow 0$. 
	We choose $\alpha'>0$ such that 
	$$\sum_{j=0}^{\lceil n\alpha' \rceil} \binom{n}{j} \# \cP^{j}<e^{\frac{n\alpha}{2}}.$$
	Then, we can choose $\varepsilon>0$, a subset $K_2$ with $\mu(K_2)>1-\frac{\delta^2}{2}$ and $N_2\in \NN$, such that 
	$$\forall n\geq N_2,~\forall x\in K_2,~\# \{0\leq j<n: f^j(x)\in U_{\varepsilon}\}\leq n \alpha'.$$
	By the construction, for every $x\in K_2$, we have $B_n(x,\varepsilon)$ can be covered by at most $e^{\frac{n\alpha}{2}}$ numbers of elements which belong to $\cP^n$.
	
	Consider $K'=K_1\cap K_2$, one has $\mu(K')>1-\delta^2$. 
	Then, there exists a  subset $K \subset K'$ such that $\mu(K)>1-\delta$ and  $\mu_{\xi(x)}(K')>1-\delta$ for every $x \in  K$.	
	Now, fix some $x\in K$. For every subset $Z\subset \xi(x)$ with $\mu_{\xi(x)}(Z)>\delta$,
	let $\Sigma \subset Z \cap K'\subset K_1$ be a compact subset with $\mu_{\xi(x)}(\Sigma)>0$. 
	
	Let $\{y_i\}_{i=1}^{r_f(n,\varepsilon,\Sigma)}\subset \Sigma$ be an $(n,\varepsilon)$-spanning set of $\Sigma$. 
	Then, we have $\bigcup_{i=1}^{r_f(n,\varepsilon,\Sigma)} B_n(y_i,\varepsilon)\supset \Sigma$. 
	Hence,  by the construction of $K_2$, we have
	$$\{P\in \cP^n: P\cap K_1 \cap \Sigma \cap \xi(x)\neq \emptyset \}\leq e^{\frac{n\alpha}{2}} \cdot r_f(n,\varepsilon,\Sigma)\leq e^{\frac{n\alpha}{2}}  \cdot r_f(n,\varepsilon,Z).$$
	By Proposition \ref{Prop:Two Balls}, we have 
	$$h_{\mu}^{i}(f)\le \liminf_{n\to\infty}\frac{1}{n}\log  r_f(n,\varepsilon,Z)+\alpha.$$   	
	This completes the proof.
\end{proof}

\section{Burguet's and Buzzi-Crovisier-Sarig's results revisited} \label{Sec:revisited}

We state three theorems summarizing key the consequences we will need of some works of Burguet \cite{Bur24P} and Buzzi-Crovisier-Sarig \cite{BCS22}.

\subsection{Local finiteness}
\begin{Theorem}\label{Thm:BBCS-revisited}
Given a real number $r>1$, if a $C^r$ surface diffeomorphism $f$ satisfies
$$h_{\rm top}(f)>\frac{\lambda_{\min}(f)}{r}.$$
Then, there is a $C^r$ neighborhood $\mathcal U$ of $f$ such that for any $g\in\mathcal U$, one has that $1\le N_g<\infty$, i.e.,  $g$ has finitely many measures of maximal entropy. 
Moreover, distinct ergodic measures of maximal entropy of $g$ belong to disjoint homoclinic classes.
\end{Theorem}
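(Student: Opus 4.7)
The plan is to reduce the statement to the pointwise versions of Burguet (existence of an MME) and Buzzi--Crovisier--Sarig (finiteness of ergodic MMEs), by showing that the strict inequality $h_{\mathrm{top}}(g)>\lambda_{\min}(g)/r$ defines a $C^r$-open condition at any $f$ satisfying it. Once such a neighborhood $\mathcal U$ is located, applying \cite{Bur24P} gives $N_g\geq 1$ and \cite{BCS22} gives $N_g<\infty$ for every $g\in\mathcal U$; so the entire content of the theorem lies in the semi-continuity argument for the two sides of \eqref{e.critical-condition}.

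First I would handle the right-hand side. The sequence $n\mapsto \log\sup_{x\in M}\|Dg^n(x)\|$ is subadditive in $n$ and depends $C^0$-continuously on $g$ in the $C^1$ topology, so Fekete's lemma identifies $\lambda^u(g)$ with the infimum over $n$ of a family of $C^1$-continuous functions of $g$, and consequently $\lambda^u$ is upper semi-continuous in the $C^1$ topology. The same holds for $\lambda^s$, hence for $\lambda_{\min}=\min(\lambda^u,\lambda^s)$. Thus for any prescribed $\varepsilon>0$, there is a $C^1$-neighborhood of $f$ on which $\lambda_{\min}(g)/r<\lambda_{\min}(f)/r+\varepsilon/2$.

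Next I would establish a matching lower bound on $h_{\mathrm{top}}(g)$. The variational principle together with Ruelle's inequality applied to both $f$ and $f^{-1}$ (using also the surface identity~\eqref{e.integrate-Lyapunov}) produces a hyperbolic ergodic measure $\mu$ of $f$ whose metric entropy is arbitrarily close to $h_{\mathrm{top}}(f)$. By Katok's approximation theorem as recalled in Section~\ref{Sec:Katok-shadowing}, $\mu$ is accumulated by a sequence of hyperbolic basic sets $\Lambda_n$ with $h_{\mathrm{top}}(f|_{\Lambda_n})\to h_\mu(f)$. Each such basic set admits a $C^1$-robust hyperbolic continuation $\Lambda_n(g)$ topologically conjugate to $\Lambda_n$, giving $h_{\mathrm{top}}(g)\geq h_{\mathrm{top}}(g|_{\Lambda_n(g)})=h_{\mathrm{top}}(f|_{\Lambda_n})$ on a $C^1$-neighborhood of $f$. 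Choosing $n$ large enough yields $h_{\mathrm{top}}(g)\geq h_{\mathrm{top}}(f)-\varepsilon/2$ on that neighborhood.

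Combining the two estimates with $\varepsilon<r\bigl(h_{\mathrm{top}}(f)-\lambda_{\min}(f)/r\bigr)$ produces a $C^1$-neighborhood, a fortiori a $C^r$-neighborhood $\mathcal U$ of $f$, on which the large-entropy hypothesis holds for every $g\in\mathcal U$; applying \cite{Bur24P} and \cite{BCS22} on this neighborhood finishes the argument. The hard part is exactly the asymmetry of~\eqref{e.critical-condition}: $\lambda_{\min}$ is only upper semi-continuous and topological entropy is in general not lower semi-continuous in the $C^r$ topology for finite $r$, so one cannot avoid invoking Katok's horseshoes on the entropy side in order to obtain a genuinely $C^1$-robust lower bound on $h_{\mathrm{top}}$.
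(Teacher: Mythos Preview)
Your proposal is correct and follows essentially the same route as the paper: prove that the strict inequality $h_{\rm top}(g)>\lambda_{\min}(g)/r$ is $C^r$-open (Proposition~\ref{Pro:robust-condition} in the paper) via upper semi-continuity of $\lambda_{\min}$ (Lemma~\ref{Lem:ups-growth-rate}) and lower semi-continuity of $h_{\rm top}$, then apply \cite{Bur24P} and \cite{BCS22} pointwise. The paper quotes Burguet's continuity of $h_{\rm top}$ for the entropy side but explicitly notes, as you do, that Katok's horseshoes already suffice; the paper also spells out the passage from $\lambda^u$ to $\lambda_{\min}$ in Burguet's existence result by replacing $f$ with $f^{-1}$, a step you should make explicit when writing up.
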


\begin{proof}
Burguet \cite[Corollary 1]{Bur24P} has proved that if $h_{\rm top}(f)>\dfrac{\lambda^u(f)}{r}$, then $f$ admits an MME. 
Since MMEs for $f^{-1}$ are also MMEs for $f$, it follows directly from Burguet’s theorem that if
$$h_{\rm top}(f)=h_{\rm top}(f^{-1})>\frac{\lambda^u(f^{-1})}{r}=\frac{\lambda^s(f)}{r},$$
then $f^{-1}$, and hence $f$, admits an MME. 
This implies the condition in Burguet's result can be relaxed to be $h_{\rm top}(f)>\dfrac{\lambda_{\min}(f)}{r}$. 
Moreover, by the ``Main theorem revisited'' in Buzzi-Crovisier-Sarig \cite{BCS22}, it follows that $g$ has finitely many ergodic MMEs. 
By Theorem 2 of \cite{BCS22}, each homoclinic class of measures contains at most one MME (and distinct homoclinic classes are disjoint since they form a partition of the set of ergodic, hyperbolic measures).

To conclude the proof of the above theorem it suffices to show that this inequality is open. 
This follows immediately from the lower semicontinuity of the topological entropy (a classical result of Katok \cite{Kat80}) and the following easy remark.
\end{proof}

\begin{Lemma}\label{Lem:ups-growth-rate}
Both $\lambda^s(f)$ and $\lambda^u(f)$ vary upper semi-continuously with respect to $f$.
Hence, $\lambda_{\min}(f)$ varies upper semi-continuously with respect to $f$. 
\end{Lemma}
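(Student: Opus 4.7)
The plan is to exhibit $\lambda^u(f)$ as an infimum of a family of continuous functions in $f$; upper semi-continuity then follows automatically. First I would set, for each $n\ge 1$,
\[
\phi_n(f):=\tfrac{1}{n}\log\sup_{x\in M}\|Df^n(x)\|.
\]
By the chain rule, $\|Df^{n+m}(x)\|\le\|Df^n(f^m(x))\|\cdot\|Df^m(x)\|$, so taking suprema gives
\[
\sup_{x\in M}\|Df^{n+m}(x)\|\le\Bigl(\sup_{x\in M}\|Df^n(x)\|\Bigr)\Bigl(\sup_{x\in M}\|Df^m(x)\|\Bigr).
\]
Thus $n\phi_n(f)$ is subadditive in $n$, and Fekete's lemma yields
\[
\lambda^u(f)=\lim_{n\to\infty}\phi_n(f)=\inf_{n\ge 1}\phi_n(f).
\]

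Next I would check that each $\phi_n$ is continuous on $\diff^r(M)$ in the $C^r$ topology (in fact already in the $C^1$ topology, which is implied by $C^r$ for $r\ge 1$). If $g_k\to f$ in $C^1$, then by the chain rule and a standard induction on $n$, the maps $Dg_k^n$ converge uniformly to $Df^n$ on the compact manifold $M$; therefore $\sup_{x\in M}\|Dg_k^n(x)\|\to\sup_{x\in M}\|Df^n(x)\|$, which gives continuity of $\phi_n$. Since the pointwise infimum of a family of continuous functions is upper semi-continuous, we conclude that $\lambda^u(\cdot)=\inf_n\phi_n(\cdot)$ is upper semi-continuous on $\diff^r(M)$.

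For $\lambda^s(f)$, note that $\lambda^s(f)=\lambda^u(f^{-1})$ by the definitions \eqref{eq:lambdaus}. The inversion map $f\mapsto f^{-1}$ is continuous in the $C^r$ topology on $\diff^r(M)$, so upper semi-continuity of $\lambda^s$ follows from upper semi-continuity of $\lambda^u$ by composition. Finally, since the minimum of two upper semi-continuous functions is upper semi-continuous, we obtain that $\lambda_{\min}(f)=\min\{\lambda^u(f),\lambda^s(f)\}$ varies upper semi-continuously in $f$. There is no real obstacle here: the only thing one must be careful about is invoking Fekete's lemma to replace the $\limsup$ definition with an infimum, and observing that the simultaneous validity for $f$ and $f^{-1}$ hinges on the (elementary) continuity of the inversion map in the $C^r$ topology.
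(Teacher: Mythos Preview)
Your argument is correct and is exactly the elaboration of what the paper means by ``follows directly from the definitions'': subadditivity gives $\lambda^u(f)=\inf_n\phi_n(f)$ as an infimum of continuous functions, hence upper semi-continuous, and the case of $\lambda^s$ follows by inversion. There is nothing to add.
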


\subsection{Limits of high entropy measures}
From \cite[Corollary 1, Corollary2]{Bur24P}, we know the following theorem:

\begin{Theorem}\label{Thm:continuity-Lyapunov-exponent}
Given $r\ge 2$, if a $C^r$ surface diffeomorphism $f$ satisfies
$$h_{\rm top}(f)>\frac{\lambda_{\min}(f)}{r},$$
then for any sequence $\{f_m\}$ with $f_m \rightarrow f$ in the $C^r$ topology and any sequence of measures $ \{\mu_m\}$ satisfying
\begin{itemize}
\item for each $m$,  $\mu_m$ is an ergodic measure of $f_m$,
\item $\mu_m$ converges to $\mu$ and $h_{\mu_m}(f_m)$ converges to $h_{\rm top}(f)$;
\end{itemize}
we have $\mu$ is a measure of maximal entropy of $f$, and moreover, 
$$\lim_{m\to\infty}\lambda^+(\mu_m,f_m)=\lambda^+(\mu,f_m), ~\lim_{m\to\infty}\lambda^-(\mu_m,f_m)=\lambda^-(\mu,f_m).$$
\end{Theorem}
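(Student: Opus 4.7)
The statement packages two assertions: that $\mu$ is a measure of maximal entropy of $f$, and that the Lyapunov exponents pass to the limit. I would establish them in that order.

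For the first, I would note that $\mu$ is automatically $f$-invariant: since $\mu_m \to \mu$ weakly-$*$ and $f_m \to f$ in $C^0$, for every continuous $\varphi$ one has $\int \varphi \circ f_m \, d\mu_m \to \int \varphi \circ f \, d\mu$ and $\int \varphi \, d\mu_m \to \int \varphi \, d\mu$, while each $\mu_m$ is $f_m$-invariant. By the variational principle, $h_\mu(f) \leq h_{\rm top}(f)$. For the reverse inequality I would invoke Burguet's joint upper semi-continuity of the measure-theoretic entropy in the large-entropy regime, contained in~\cite[Corollary 2]{Bur24P}: since Proposition~\ref{Pro:robust-condition} makes (\ref{e.critical-condition}) $C^r$-open, the sequence $(\mu_m, f_m)$ is eventually in its domain of validity, yielding $\limsup_m h_{\mu_m}(f_m) \leq h_\mu(f)$. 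Combined with the hypothesis $h_{\mu_m}(f_m) \to h_{\rm top}(f)$, this forces $h_\mu(f) = h_{\rm top}(f)$.

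For the second assertion, the classical upper semi-continuity of the top Lyapunov exponent (from subadditivity of $n \mapsto \log \|Df^n(x)\|$) gives
\[
\limsup_m \lambda^+(\mu_m, f_m) \leq \lambda^+(\mu, f), \qquad \liminf_m \lambda^-(\mu_m, f_m) \geq \lambda^-(\mu, f),
\]
the second being the analogue for $f^{-1}$. The sum formula~(\ref{e.integrate-Lyapunov}), together with $C^1$ convergence of $f_m$ and weak-$*$ convergence of $\mu_m$, gives
\[
\lambda^+(\mu_m, f_m) + \lambda^-(\mu_m, f_m) = \int \log|{\rm Det}(Df_m)|\, d\mu_m \longrightarrow \int \log|{\rm Det}(Df)|\, d\mu = \lambda^+(\mu, f) + \lambda^-(\mu, f).
\]
These ingredients alone still permit a downward jump in $\lambda^+$ matched by an upward jump in $\lambda^-$. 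I would close this gap by invoking Burguet's reverse inequality (lower semi-continuity of $\lambda^+$ along entropy-maximizing sequences), the other content of~\cite[Corollary 2]{Bur24P}. Once $\liminf_m \lambda^+(\mu_m, f_m) \geq \lambda^+(\mu, f)$ is in hand, the upper bound pinches $\lambda^+$ to its limit, and the sum formula then delivers the convergence of $\lambda^-$.

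The principal obstacle is precisely this lower semi-continuity of $\lambda^+$. For arbitrary $C^r$ diffeomorphisms, Lyapunov exponents can collapse in the weak-$*$ limit, and it is exactly the large-entropy condition~(\ref{e.critical-condition}) that forestalls such a drop, via Burguet's Yomdin-type reparameterization analysis of the iterates $Df_m^n$ along entropy-maximizing ergodic measures. I would treat this result as a black box from~\cite{Bur24P}; its reproduction lies outside the scope of the present note, which is explicitly organized to derive Theorem~\ref{Thm:continuity-Lyapunov-exponent} as a packaging of~\cite[Corollaries 1 and 2]{Bur24P}.
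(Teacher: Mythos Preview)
Your approach is essentially the paper's own: invoke Burguet for the MME property and for convergence of one Lyapunov exponent, then recover the other from the sum formula~(\ref{e.integrate-Lyapunov}). The only omission is the preliminary reduction to $\lambda_{\min}(f)=\lambda^u(f)$ (passing to $f^{-1}$ otherwise), needed because Burguet's Lyapunov-continuity argument---which the paper locates in the proof of \cite[Corollary~1]{Bur24P}, not Corollary~2---is stated under the one-sided hypothesis $h_{\rm top}(f)>\lambda^u(f)/r$.
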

As mentioned earlier, one can consider $f^{-1}$ if necessary. 
Thus, one assumes that $\lambda_{\min}(f)=\lambda^u(f)$. 
Burguet \cite[Corollary 2]{Bur24P} has shown that $\mu$ is a measure of maximal entropy of $f$. 
Then from the proof of \cite[Corollary 1]{Bur24P}, it follows that $\lambda^+(\mu_m)$ converges to $\lambda^+(\mu)$. 
Using Equation~\eqref{e.integrate-Lyapunov} and the fact that the function $\log|{\rm Det}(Df)|$ is continuous, one has that 
$$\lim_{m\to\infty}(\lambda^-(\mu_m,f_m)+\lambda^+(\mu_m,f_m))=\lambda^-(\mu,f)+\lambda^+(\mu,f).$$
Consequently, $\lambda^-(\mu_m,f_m)$ converges to $\lambda^-(\mu,f)$.

\subsection{Burguet's reparametrization lemma}
We first recall the following fundamental notions and lemma from \cite{Bur24,Bur24P}.
Recall that a $C^r$ curve $\sigma:[-1,1]\rightarrow M$ is called \emph{bounded} if 
$$\sup_{2\leq s\leq r} \|D^s \sigma\|_{\sup} \leq \frac{1}{6} \|D\sigma\|_{\sup}.$$
For $\varepsilon>0$, a bounded $C^r$ curve $\sigma:[-1,1]\rightarrow M$ is called \emph{strongly $\varepsilon$-bounded}, if $\|D\sigma\|_{\sup}\leq \varepsilon$.

\begin{Lemma}[\cite{Bur24P}, Lemma 12]\label{Lem:local-reparametrization}
For $r\ge 2$, there is a constant $C_r>0$ with the following property.

Given any number $\Upsilon>0$, there is $\varepsilon(\Upsilon)>0$ such that if $g$ is a $C^r$ diffeomorphism with $\|g\|_{C^r}<\Upsilon$, then for any strongly $\varepsilon_{\Upsilon}$-bounded $C^r$ curve $\sigma:~[-1,1]\to M$ and any $\chi^+,\chi \in\mathbb Z$, there is a family of affine reparametrizations $\Theta$ such that
\begin{enumerate}
\item[(1)] $\{t\in[-1,1]:~x=\sigma(t),\lceil\log\|Dg(x)\|\rceil=\chi^+,~\lceil\log\|Dg|_{T_x{\sigma_*}}\|\rceil=\chi\}\subset \bigcup_{\theta\in\Theta}\theta([-1,1])$;
\item[(2)] $g\circ \sigma\circ\theta$ is bounded for any $\theta\in\Theta$;
\item[(3)] $\#\Theta\le C_r \exp(\frac{\chi^+-\chi}{r-1})$.
\end{enumerate}
\end{Lemma}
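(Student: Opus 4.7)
Lemma~\ref{Lem:local-reparametrization} is a quantitative Yomdin--Gromov reparametrization statement, and my plan is to carry out the standard Yomdin algebraic-lemma strategy, separately tracking the two Lyapunov-like scales $e^{\chi^+}$ and $e^{\chi}$ along the curve. The underlying principle is that along a highly curved direction the extra stretching can be absorbed into a small affine rescaling, and the number of rescalings needed is governed entirely by the discrepancy $\chi^+-\chi$.

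I would first establish pointwise derivative estimates via Faa di Bruno. Fix $\varepsilon(\Upsilon)>0$ small (to be tuned below) and a strongly $\varepsilon$-bounded curve $\sigma$, so that $\|D\sigma\|_{\sup}\le\varepsilon$ and $\|D^s\sigma\|_{\sup}\le\varepsilon/6$ for $2\le s\le r$. On the set
$$E := \{t\in[-1,1] : \lceil\log\|Dg(\sigma(t))\|\rceil=\chi^+,\ \lceil\log\|Dg|_{T_{\sigma(t)}\sigma_*}\|\rceil=\chi\},$$
the first derivative satisfies $\|D(g\circ\sigma)(t)\|$ comparable to $e^{\chi}\|D\sigma(t)\|$, while
$$D^s(g\circ\sigma) = \sum_{\pi\vdash\{1,\dots,s\}} D^{|\pi|}g\cdot \prod_{B\in\pi}D^{|B|}\sigma$$
is dominated by the one-block term $Dg\cdot D^s\sigma$ with norm at most $e^{\chi^+}\varepsilon/6$. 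Every other partition $\pi$ picks up a power $\varepsilon^{|\pi|-1}$ or higher, so these cross terms are absorbed by choosing $\varepsilon(\Upsilon)$ small enough in terms of $\Upsilon$ and $r$.

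Next I perform the affine subdivision. For $\theta(t)=a+\lambda t$ with $\lambda\le 1$, the affine chain rule gives $D^s(g\circ\sigma\circ\theta) = \lambda^s (D^s(g\circ\sigma))\circ\theta$. Strong $\varepsilon$-boundedness of $g\circ\sigma\circ\theta$ translates, after Step~1, into $\lambda\cdot e^\chi\|D\sigma\|_{\sup}\le\varepsilon$ and $\lambda^{s-1}\le c_r e^{\chi-\chi^+}$ for $2\le s\le r$ (with $s=r$ being the binding constraint). The second inequality fixes the scale $\lambda\sim c_r\,e^{-(\chi^+-\chi)/(r-1)}$; the first is then automatic once $\varepsilon(\Upsilon)$ is small. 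Partition $[-1,1]$ into closed subintervals of length $2\lambda$ and take $\Theta$ to be the collection of affine maps from $[-1,1]$ onto those subintervals whose image meets $E$. By construction $\#\Theta\le C_r e^{(\chi^+-\chi)/(r-1)}$, giving conclusions (1) and (3); conclusion (2) is exactly what the derivative bounds encode.

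The main obstacle is two pieces of bookkeeping. First, the defining conditions $\lceil\log\|Dg\|\rceil=\chi^+$ and $\lceil\log\|Dg|_{T\sigma}\|\rceil=\chi$ hold only pointwise on $E$, whereas strong $\varepsilon$-boundedness is a supremum bound on $\theta([-1,1])$; one must exploit the continuity of $Dg$ on the compact manifold and the smallness of $\lambda\varepsilon$ to guarantee that these integer levels do not change on the short subintervals, up to a multiplicative constant absorbed into $C_r$. Second, the Faa di Bruno cross terms such as $D^2g\cdot(D^{s-1}\sigma)(D\sigma)$ are a priori comparable to the main term; the strong $\varepsilon$-bounded hypothesis and the $C^r$ norm bound $\|g\|_{C^r}\le\Upsilon$ together ensure that all such terms contribute at most $O(\varepsilon)$ of the main term once $\varepsilon(\Upsilon)$ is chosen sufficiently small, and the combinatorial constants collect into $C_r=C_r(r)$ independent of $\Upsilon$.
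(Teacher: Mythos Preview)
The paper does not supply a proof of Lemma~\ref{Lem:local-reparametrization}; it is quoted from Burguet \cite[Lemma~12]{Bur24P} and used as a black box. So there is no in-paper argument to compare against, and your sketch is being measured against the Yomdin--Burguet machinery itself.

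Your outline is the right genre, but it has a real gap at the first-derivative step. You assert that the constraint $\lambda\cdot e^{\chi}\|D\sigma\|_{\sup}\le\varepsilon$ ``is then automatic once $\varepsilon(\Upsilon)$ is small.'' Since $\|D\sigma\|_{\sup}\le\varepsilon$, this inequality reduces to $\lambda e^{\chi}\le 1$, which is \emph{independent of} $\varepsilon$. When $\chi>0$ is large (the typical situation when $\sigma$ lies along an expanding direction), this forces $\lambda\le e^{-\chi}$ and hence $\#\Theta\gtrsim e^{\chi}$, which can vastly exceed the claimed $C_r\,e^{(\chi^+-\chi)/(r-1)}$. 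The isotropic case $\chi^+=\chi$ already breaks your count. Making $\varepsilon(\Upsilon)$ small does nothing here.

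There is a related slip in the higher-order analysis. If one naively requires $\lambda^{s-1}\le c_r\,e^{\chi-\chi^+}$ for every $2\le s\le r$, the \emph{binding} constraint is $s=2$, not $s=r$ (since $\lambda<1$ makes $\lambda^{s-1}$ decreasing in $s$); that would give $\#\Theta\sim e^{\chi^+-\chi}$, again not the stated bound. The reason the exponent $1/(r-1)$ appears in Yomdin--Burguet is \emph{not} a uniform affine chop: one first replaces $g\circ\sigma$ on each subinterval by its Taylor polynomial of degree $r-1$, applies the Yomdin--Gromov algebraic lemma to reparametrize the polynomial piece into $O_r(1)$ bounded curves, and only the degree-$r$ remainder is controlled by the scale $\lambda$. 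This polynomial step is absent from your sketch and is exactly what produces both the exponent $1/(r-1)$ and the decoupling of the count from $e^{\chi}$. The confinement of the reparametrized curves inside the Bowen ball (cf.\ the remark opening the proof of Lemma~\ref{Lem:reparametrization-up-to-m}) is what ultimately upgrades ``bounded'' to ``strongly $\varepsilon$-bounded'' in the applications, not a direct first-derivative bound coming from the subdivision itself.
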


We now apply inductively the above reparametrization to control the iterates of a small curves staying near a given orbit.
Let $r(M)$ be a positive constant such that $\exp_x$ maps $T_xM(2r(M))$ diffeomorphically onto a neighborhood of $B(x,r(M))$ for every $x\in M$.

Denote $\sigma_*:={\rm Image}(\sigma)$.
For $q>0$ and for $m$-diffeomorphisms $(g^{i_j})_{j=0}^{m-1}$ with $1\leq i_j \leq q$ for all $0\leq j<m$, and $2m$-integers $(\chi^+_j,\chi_j)_{j=0}^{m-1}$,  we define the set
$$\Sigma(\{\chi^{+}_k, \chi_k, g^{i_k}\}_{k=0}^{m-1})\subset \sigma_*$$
as follows: $z\in \Sigma(\{\chi^{+}_k, \chi_k, g^{i_k}\}_{k=0}^{m-1})$ if and only if for every $0\le k \le m-1$
$$\lceil\log\|Dg^{i_k}(g^{i_0+\cdots+i_{k-1}}(z))\|\rceil=\chi^+_j,~\lceil\log\|Dg^{i_k}|_{T_{g^{i_0+\cdots+i_{k-1}}(z)}{(g^{i_0+\cdots+i_{k-1}}(\sigma_*))}}\|\rceil=\chi_j$$
Meanwhile, for any $z\in \sigma_*$, one defines
$$\chi^+_j(z):=\lceil\log\|Dg^{i_k}(g^{i_0+\cdots+i_{k-1}}(z))\|\rceil,~~\chi_j(z):=\lceil\log\|Dg^{i_k}|_{T_{g^{i_0+\cdots+i_{k-1}}(z)}{(g^{i_0+\cdots+i_{k-1}}(\sigma_*))}}\|\rceil.$$
Let $\Upsilon>\max\{\|g\|_{C^r},\cdots,\|g^q\|_{C^r}\}$ and $0<\varepsilon<\min\{\frac{\Upsilon}{10},\frac{r(M)}{10\Upsilon}\}$.
We denote $B(x,\varepsilon,\{g^{i_k}\}_{k=0}^{m-1})$ by 
$$B(x,\varepsilon,\{g^{i_k}\}_{k=0}^{m-1}):=\{y\in M: d(x,y)\leq \varepsilon,~d(g^{i_0+\cdots+i_{k-1}}(x),g^{i_0+\cdots+i_{k-1}}(y))\leq \varepsilon, \forall 0\leq k<m \}.$$
The following lemma is close to Burguet \cite[Lemma 8]{Bur24}.
\begin{Lemma}\label{Lem:reparametrization-up-to-m}
Given $2m$-integers $(\chi^+_j,\chi_j)_{j=0}^{m-1}$ and $m$-diffeomorphisms $(g^{i_j})_{j=0}^{m}$, for each $x\in \sigma_*$, there exists a family $\Gamma_m'=\Gamma_m'(\Sigma(\{\chi^{+}_k, \chi_k, g^{i_k}\}_{k=0}^{m-1}),x)$ of reparametrizations satisfying
$$\#\Gamma_m'\le C_r^m\exp\left(\sum_{i=0}^{m-1}\frac{\chi_i^+-\chi_i}{r-1}\right),$$
and with the following properties
\begin{enumerate}
\item[(1)] $B(x,\varepsilon,\{g^{i_k}\}_{k=0}^{m-1})\cap \Sigma(\{\chi^{+}_k, \chi_k, g^{i_k}\}_{k=0}^{m-1})\subset\bigcup_{\gamma\in\Gamma_m'}\sigma\circ\gamma([-1,1])$
\item[(2)] for every $\gamma\in\Gamma_m'$, $g^{j}\circ\sigma\circ\gamma$ is strongly $2\varepsilon$-bounded for any $j\in \{0,i_0,i_0+i_1,\cdots,i_0+\cdots+i_{m-1}\}$.
\end{enumerate}
\end{Lemma}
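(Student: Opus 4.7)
The plan is to argue by induction on $m$, at each step peeling off the last diffeomorphism $g^{i_{m-1}}$ and applying Lemma \ref{Lem:local-reparametrization} to the appropriate iterated curve. For the base case $m=0$, take $\Gamma_0' = \{\mathrm{id}\}$ with $\mathrm{id}$ the identity of $[-1,1]$; the cardinality bound $C_r^0 = 1$ is trivial, and property (2) reduces to $\sigma$ itself being strongly $2\varepsilon$-bounded, which is built into the setting of the section (the curves that feed into this lemma come from the typical set $K$ and are already strongly $\varepsilon$-bounded by our choice in \eqref{e.bound-varepsilon}).

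For the inductive step, suppose $\Gamma_{m-1}'$ has been constructed satisfying all three properties. For each $\gamma \in \Gamma_{m-1}'$, the induction hypothesis applied at index $j = i_0 + \cdots + i_{m-2}$ says that the curve
$$\tau_\gamma \;:=\; g^{i_0+\cdots+i_{m-2}} \circ \sigma \circ \gamma$$
is strongly $2\varepsilon$-bounded. Since \eqref{e.bound-varepsilon} ensures $2\varepsilon < \varepsilon(\Upsilon_f)$ and since $\|g^{i_{m-1}}\|_{C^r} < \Upsilon_f$ for $g \in \mathcal U_{\mathrm{boun}}$, I feed $\tau_\gamma$ into Lemma \ref{Lem:local-reparametrization} with diffeomorphism $g^{i_{m-1}}$ and integers $(\chi_{m-1}^+, \chi_{m-1})$. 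This yields an affine family $\Theta_\gamma$ of cardinality at most $C_r \exp((\chi_{m-1}^+ - \chi_{m-1})/(r-1))$ that covers, in the parameter space of $\tau_\gamma$, all points satisfying the stage-$(m-1)$ derivative conditions, and such that $g^{i_{m-1}} \circ \tau_\gamma \circ \theta$ is strongly $\varepsilon$-bounded for every $\theta \in \Theta_\gamma$. I then define $\Gamma_m' := \{\gamma \circ \theta : \gamma \in \Gamma_{m-1}',\ \theta \in \Theta_\gamma\}$; the cardinality estimate follows by telescoping. For the coverage property, any $y \in B(x,\varepsilon,\{g^{i_k}\}_{k=0}^{m-1}) \cap \Sigma(\{\chi_k^+,\chi_k,g^{i_k}\}_{k=0}^{m-1}) \cap \sigma_*$ is placed by induction at $y = \sigma(\gamma(t_0))$ for some $\gamma \in \Gamma_{m-1}'$ and $t_0 \in [-1,1]$, and then the stage-$(m-1)$ derivative condition at $\tau_\gamma(t_0)$ puts $t_0$ inside $\theta([-1,1])$ for some $\theta \in \Theta_\gamma$, so $y \in \sigma\circ\gamma\circ\theta([-1,1])$.

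The main technical point I expect to wrestle with is verifying that property (2) is preserved for the \emph{already-controlled} indices $j \in \{0, i_0, \ldots, i_0+\cdots+i_{m-2}\}$ when $\gamma$ is replaced by $\gamma \circ \theta$. This is exactly where it matters that Lemma \ref{Lem:local-reparametrization} outputs \emph{affine} reparametrizations $\theta:[-1,1] \to [-1,1]$: such $\theta$ satisfy $|D\theta| \le 1$ and $D^s\theta \equiv 0$ for $s \ge 2$, so for any strongly $2\varepsilon$-bounded curve $\psi$ the chain rule yields $\|D(\psi\circ\theta)\|_{\sup} \le \|D\psi\|_{\sup} \le 2\varepsilon$ and $\|D^s(\psi\circ\theta)\|_{\sup} \le |D\theta|^s \|D^s\psi\|_{\sup} \le \tfrac{1}{6}\|D(\psi\circ\theta)\|_{\sup}$ for $2 \le s \le r$, so $\psi \circ \theta$ remains strongly $2\varepsilon$-bounded. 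The final index $j = i_0+\cdots+i_{m-1}$ is then covered directly by the strong $\varepsilon$-boundedness output of Lemma \ref{Lem:local-reparametrization}. Once this stability of the bounded-curve condition under composition with affine contractions is in hand, the counting and the coverage are routine.
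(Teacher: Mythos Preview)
Your inductive scheme is exactly the one the paper has in mind: apply Lemma~\ref{Lem:local-reparametrization} successively to the maps $g^{i_0},g^{i_1},\ldots,g^{i_{m-1}}$, compose the affine reparametrizations, and telescope the cardinality bound. The coverage argument and the treatment of the top index $j=i_0+\cdots+i_{m-1}$ are fine.

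There is one point where your write-up diverges from the paper and leaves a small gap. For the base case you take $\Gamma_0'=\{\mathrm{id}\}$ and assert that $\sigma$ itself is strongly $2\varepsilon$-bounded ``by the setting of the section''. But in the statement (and in the later application to $\sigma_*=W^u_{\rm loc}(x_0)$) the curve $\sigma$ is only assumed \emph{bounded}, not strongly $2\varepsilon$-bounded; nothing in \eqref{e.bound-varepsilon} forces $\|D\sigma\|_{\sup}\le 2\varepsilon$. The paper's one-line observation --- that a bounded curve whose image lies in a ball of radius $\varepsilon$ is automatically strongly $2\varepsilon$-bounded --- is precisely what starts the induction: you first pass to the portion of $\sigma$ lying in $B(x,\varepsilon)$ (which is all you need to cover, by the definition of $B(x,\varepsilon,\{g^{i_k}\})$), and that restricted bounded curve is then strongly $2\varepsilon$-bounded. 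The same observation, together with the Bowen-ball constraint $d(g^{i_0+\cdots+i_{k-1}}(x),g^{i_0+\cdots+i_{k-1}}(y))\le\varepsilon$, is what secures property~(2) at the intermediate indices; your purely affine chain-rule computation runs into the difficulty that $\|D(\psi\circ\theta)\|_{\sup}=|D\theta|\cdot\sup_{\theta([-1,1])}\|D\psi\|$ may be strictly smaller than $|D\theta|\cdot\|D\psi\|_{\sup}$, so the inequality $\|D^s(\psi\circ\theta)\|_{\sup}\le\tfrac{1}{6}\|D(\psi\circ\theta)\|_{\sup}$ does not follow from $\|D^s\psi\|_{\sup}\le\tfrac{1}{6}\|D\psi\|_{\sup}$ alone. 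Invoking the ball constraint at each stage sidesteps this and is how the paper intends the induction to proceed.
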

\begin{proof}
Since the intersection of a bounded curve of length smaller than the injectivity radius of the exponential map with an $\varepsilon$-ball is contained in a strongly $2\varepsilon$-bounded curve,
the proof of this lemma follows by applying the reparametrization lemma (see Lemma \ref{Lem:local-reparametrization}) inductively to the sequence $\{g^{i_k}\}_{k=0}^{m-1}$.
\end{proof}

\subsection{Sard's theorem for dynamical foliations}
We recall basic facts about uniformly hyperbolic sets including estimates on the transverse dimension of the associated foliation, and state a consequence of dynamical Sard's lemma from \cite{BCS22}.

\medbreak
 
A compact invariant set $\Lambda$ is said to be \emph{hyperbolic} if there are constants $C>0$ and $\lambda\in(0,1)$, and a $Df$-invariant continuous splitting $T_\Lambda M=E^s\oplus E^u$, such that for any $x\in\Lambda$ and any $n\in\mathbb N$, one has
$$\|Df^n|_{E^s(x)}\|\le C\lambda^n,~~~\|Df^{-n}|_{E^u(x)}\|\le C\lambda^n.$$
Recall that a \emph{horseshoe} is a hyperbolic invariant compact set $\Lambda$ which is infinite, totally discontinuous and topologically transitive (there is $x\in\Lambda$ whose orbit is dense in $\Lambda$). 
See also Section~\ref{Sec:Katok-shadowing}.

\begin{Theorem}\label{Thm:hyperbolic-easy}
Any hyperbolic invariant compact set $\Lambda$ admits measures of maximal entropy, i.e., there is an ergodic measure $\mu$ supported on $\Lambda$ such that $h_\mu(f)=h_{\rm top}(f|_\Lambda)$.
 
Any hyperbolic set $\Lambda$ admits a continuation, i.e., there is a $C^1$ neighborhood $\mathcal U$ of $f$ such that any $g\in\mathcal U$ admits a hyperbolic set $\Lambda_g$ which is topological conjugate to $\Lambda$. 
In particular, one has that $h_{\rm top}(g|_{\Lambda_g})=h_{\rm top}(f|_{\Lambda})$.
 
If $\Lambda$ is a horseshoe, then for any two points $x,y\in\Lambda$, there is an integer $n\ge0$ such that $W^u(x)$ is $C^1$-accumulated by $W^u(f^ny)$ in the compact open $C^1$-topology (recall the notion $W^u(x)$ being accumulated by $W^u(f^ny)$ in Section \ref{Sec:homoclinic-relation}).
\end{Theorem}

We will use the dynamical Sard's theorem from Buzzi-Crovisier-Sarig \cite[Theorem 4.2]{BCS22}. 
Based on Section 4 of \cite{BCS22}, we will state its version for a {horseshoe} of a $C^r$ surface diffeomorphism. 
Note that \cite{BCS22} considered some continuous lamination. 
However, in their applications, this continuous lamination is taken to be $W^u(\Lambda)$.
 
We need the following simple topological notion.
Let us say that a curve $A$ \emph{topologically crosses} a curve $B$ if there is a topological disk $\Delta$ such that $\Delta\setminus B$ has exactly two connected components $\Delta_-,\Delta_+$ and some connected component of $A\cap \Delta$ intersects both $\Delta_-,\Delta_+$. 
See Figure~\ref{Fig:TC} (i) and (ii). 
Recall that $\lambda^s(\Lambda,f)$ was defined in \eqref{eq:Lam}.
 
\begin{Theorem}\label{Thm:basic-set-sard}
Let $f$ be a $C^r$ surface diffeomorphism. 
Assume that $\Lambda$ is a {horseshoe} satisfying 
$$\frac{h_{\rm top}(f|_\Lambda)}{\lambda^s(\Lambda,f)}>1/r,$$
and that $\gamma$ is a $C^r$ curve. 
If for all points $x\in\Lambda$, $\gamma$ topologically crosses $W^u(f^kx)$ for some $k\ge0$, then for all points $y\in\Lambda$, there is an integer $k'\ge0$ such that $\gamma\pitchfork W^u(f^{k'}y)\neq\emptyset$.
\end{Theorem}

\begin{figure}[htp]
	\centering
	\includegraphics[width=0.6\linewidth]{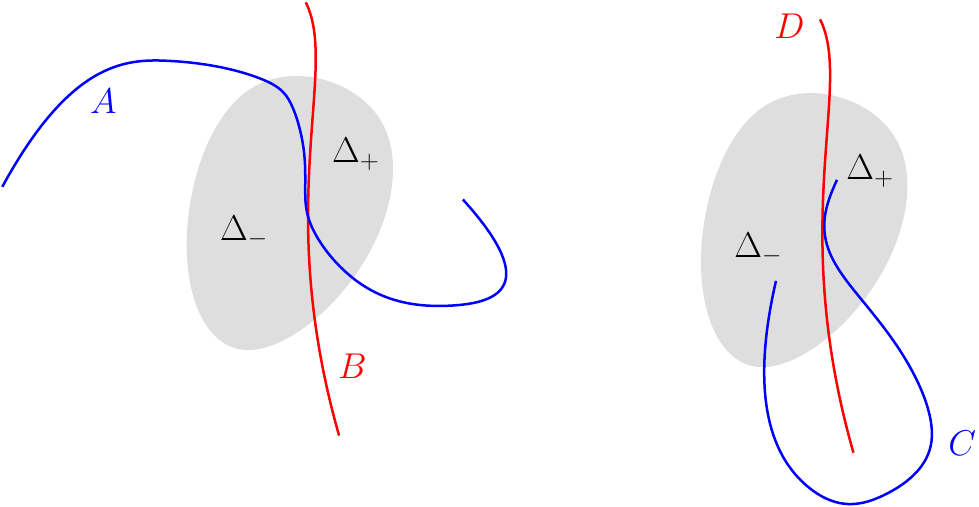}
	\caption{From left to right: (i) a curve $A$ topologically crosses a curve $B$; (ii) a curve $C$ does not topologically cross a curve $D$. }
	\label{Fig:TC}
\end{figure}

\section{The main reduction theorem - setup }\label{Sec:MRT}

We consider the following version of Theorem~\ref{Thm:HomoRel}.
It generalizes \cite[Proposition 5.4]{BCS22} to the large entropy surface setting. 
It is the key technical result underlying the proof of all main results through Corollary~\ref{Cor:LimitDisjoint}, see Section~\ref{Sec:ProofMainThms}.

\begin{Theorem}\label{Thm:main-reduction}
Let $r\ge 2$ and  $f$ be a $C^r$ surface diffeomorphism  satisfying
$$h_{\rm top}(f)>\frac{\lambda_{\min}(f)}{r}.$$
Assume that $\mu_0$ is a measure of maximal entropy of $f$, and there exist $\alpha\in(0,1]$ and an ergodic measure of maximal entropy $\nu$ such that
$$\mu_0=\alpha\nu+(1-\alpha)\nu'.$$
Let $O$ be a hyperbolic periodic orbit that is homoclinically related with $\nu$.

\smallskip

Then, there exists a $C^r$ neighborhood $\mathcal U$ of $f$, a neighborhood $U$ of $\mu_0$, and a number $h\in(0,h_{\rm top}(f))$, such that for any $g\in \mathcal U$, and any ergodic measure $\mu\in U$ of $g$ with $h_\mu(g)>h$, we have that $\mu$ is homoclinically related with the hyperbolic continuation $O_g$ of the orbit $O$.
\end{Theorem}

\begin{Remark}
The proof of Theorem~\ref{Thm:main-reduction} differs from \cite[Proposition 5.4]{BCS22} in two key respects. 
First we need to use the local entropy control from Burguet's reparametrization lemma, see Lemma~\ref{Lem:reparametrization-up-to-m} below. Second, our assumption $\htop(f)>\lambda_{\min}(f)/r$ only gives that one of the stable or unstable laminations has a large transverse dimension. 
Hence we must use non-symmetrical arguments unlike in the proof of \cite[Proposition 5.4]{BCS22} (but similarly as in \cite[Section 6]{BCS22}, see, e.g., \cite[Proposition 6.4]{BCS22}).
\end{Remark}

In the rest of Section~\ref{Sec:MRT}, we consider $\mu$ as above and select $\mu$-typical points in some local unstable manifold so that the entropy of $\mu$ can be bounded in terms of the number of reparametrizations needed to cover these typical points. 
This will involve consequences of Burguet's \cite{Bur24P}: (i) the convergence of the exponents near MMEs; (ii) the reparametrization lemma for curves; as well as a more classical result: (iii) entropy formula of Ledrappier-Young. 
The proof of Theorem~\ref{Thm:main-reduction} will be concluded in Section~\ref{Sec:MRT-geo}.

\subsection{Choose constants and neighborhoods of $f$}

\subsubsection{Choose $\delta>0$ and $h_1>0$}\label{SEC:delta}
We take $\delta>0$ such that
$$0<\delta<\frac{\alpha}{20} h_{\rm top}(f). $$
We then choose $h_1>0$ as follows.
\begin{Lemma}\label{Lem:h-neighborhood}
There exist a $C^r$ neighborhood $\mathcal U_{\rm ent}$ of $f$ and a number $h_1$ such that for any $g\in\mathcal U_{\rm ent}$, we have
\begin{equation}\label{eq:deltah1}
   \max\left(\frac{\lambda_{\min}(g)}{r},4\delta+(1-\frac{\alpha}{2})h_{\rm top}(f)\right)< h_1 < \htop(f).
\end{equation}
\end{Lemma}
\begin{proof}
Note $4\delta+(1-\frac{\alpha}{2})h_{\rm top}(f)< (1-(3/10)\alpha)\htop(f)<\htop(f)$. 
Recall also that $f$ has large entropy. Thus, we have that 
         $$\max\left((\lambda_{\min}(f)+\eta)/r,4\delta+(1-\frac{\alpha}{2})h_{\rm top}(f)\right)< \htop(f).$$
for any $\eta\in\left(0,r\cdot h_{\rm top}(f)-\lambda_{\min}(f)\right)$. 
Lemma~\ref{Lem:ups-growth-rate} implies that  $\lambda_{\min}(g)<\lambda_{\min}(f)+\eta$ for all $g$ in some neighborhood $\mathcal U_{\rm ent}$ of $f$, hence
  \begin{align*}
    \sup_{g\in\mathcal U_{\rm ent}} \max&\left((\lambda_{\min}(g)/r,4\delta+(1-\frac{\alpha}{2})h_{\rm top}(f)\right)\\ 
                    &\le \max\left((\lambda_{\min}(f)+\eta)/r,4\delta+(1-\frac{\alpha}{2})h_{\rm top}(f)\right)< h_{\rm top}(f).
  \end{align*}
We can pick $h_1$ as claimed.
\end{proof}
\subsubsection{Choose $h>0$ and the integer $q$}

\begin{Lemma}\label{Lem:Lyapunov-quantity}

Assume that $f$ is a $C^r$ surface diffeomorphism satisfying $h_{\rm top}(f)>\frac{\lambda_{\min}(f)}{r}$. 
Given $\eta>0$, there exists a positive integer $q_f\in\mathbb N$ with the following properties.

For any $q\ge q_f$, there exist a $C^r$ neighborhood $\mathcal U_{\rm con}$ of $f$ and a number $0<h<h_{\rm top}(f)$, such that for every $g\in \mathcal U_{\rm con}$ and every ergodic measure $\mu$ of $g$ with $h_{\mu}(g)>h$, the following holds:
\begin{align*}
	\frac{1}{q}\int\log\|D_xg^q\|{\rm d}\mu(x)&\in [\lambda^+(\mu,g),\lambda^+(\mu,g)+\eta);\\
	-\frac{1}{q}\int\log \|D_xg^{-q}\|{\rm d}\mu(x) &\in (\lambda^-(\mu,g)-\eta,\lambda^-(\mu,g)].
\end{align*}

\end{Lemma}
\begin{proof}
We will prove the result for the upper Lyapunov exponents. 
The result for the lower Lyapunov exponents can then be obtained by considering $f^{-1}$. 

Fix $\eta>0$.  
Recall that $N_f$ is the number of ergodic measures of maximal entropy of $f$. 
Denote by $\{\mu_1,\cdots,\mu_{N_f}\}$ the set of ergodic measures of maximal entropy of $f$. 
For each $i$, there exists $q_i\in\mathbb N$ such that for any $q\ge q_i$, one has that
$$\frac{1}{q}\int\log\|Df^q\|{\rm d}\mu_i \in \left[\lambda^+(\mu_i),\lambda^+(\mu_i)+\frac{\eta}{3}\right).$$
We define
$$q_f:=\max\{q_1,q_2,\cdots,q_{N_f}\}.$$
For any measure of maximal entropy $\nu$ of $f$, one has the decomposition $\nu=\sum_{i=1}^{N_f}\alpha_i\mu_i$ with $\alpha_i\ge 0$ and $\sum_{i=1}^{N_f}\alpha_i=1$. Thus, we get
$$\lambda^+(\nu)=\sum_{i=1}^{N_f}\alpha_i \lambda^+(\mu_i),$$
and for every $q\ge q_f$
$$\frac{1}{q}\int\log\|Df^q\|{\rm d}\nu= \sum_{i=1}^{N_f}\alpha_i \frac{1}{q}\int\log\|Df^q\|{\rm d}\mu_i.$$
Thus, one has
$$\frac{1}{q}\int\log\|Df^q\|{\rm d}\nu\in\left[\lambda^+(\nu),\lambda^+(\nu)+\frac{\eta}{3}\right).$$
Choose a $C^r$ neighborhood $\mathcal U^1_{\rm con}$ of $f$ and a constant $\delta_{\rm con}>0$, such that for any $g\in \mathcal U^1_{\rm con}$, and any probability measures $\mu$ and $\nu$ with $d(\mu,\nu)<\delta_{\rm con}$, we have
$$\left|\frac{1}{q}\int\log\|Df^q\|{\rm d}\nu-\frac{1}{q}\int\log\|Dg^q\|{\rm d}\mu\right|<\frac{\eta}{3}.$$

By Theorem \ref{Thm:continuity-Lyapunov-exponent}, we can choose a $C^r$ neighborhood $\mathcal U^2_{\rm con}\subset \mathcal U^1_{\rm con}$ of $f$ and a positive number $h_1<h<h_{\rm top}(f)$ such that for every $g\in \mathcal U^2_{\rm con}$ and every ergodic measure $\mu$ of $g$ with $h_{\mu}(g)>h$,  there exists a measure of maximal entropy $\nu$ of $f$ such that
\begin{itemize}
\item $d(\mu,\nu)\leq \delta_{\rm con}$, $|\lambda^+(\mu,g)-\lambda^+(\nu,f)|<\frac{\eta}{3}$ and $|\lambda^{-}(\mu,g)-\lambda^{-}(\nu,f)|<\frac{\eta}{3}$.
\end{itemize}
Thus, we have 
$$\frac{1}{q}\int\log\|Dg^q\|{\rm d}\mu<\frac{1}{q}\int\log\|Df^q\|{\rm d}\nu+\frac{\eta}{3} \le\lambda^+(\nu,f)+\frac{2\eta}{3}\le \lambda^+(\mu,g)+\eta.$$
Clearly, by the property of Lyapunov exponents, we also have
$$\frac{1}{q}\int\log\|Dg^q\|{\rm d}\mu\ge \lambda^+(\mu,g).$$
Finally, by the lower semi-continuity of topological entropy at $f$ (a classical result of Katok \cite{Kat80}), we can choose a $C^r$ neighborhood $\mathcal U_{\rm con}\subset \mathcal U^2_{\rm con}$ of $f$ such that $h_{\rm top}(g)>h$ for every $g\in \mathcal U_{\rm con}$. 
Thus, the conclusion of the lemma holds for $q_f$, $\mathcal U_{\rm con}$ and $h_2$.
\end{proof}

Based on Lemma \ref{Lem:Lyapunov-quantity}, we can choose $q\in \NN$, $h>0$ and a $C^r$ neighborhood  $\mathcal U_{\rm con}$ of $f$, such that for any $g\in \mathcal U_{\rm con}$ and any ergodic measure $\mu$ of $g$ with $h_{\mu}(g)>h$ (such measures exist), the following conditions hold (the constant $C_r$ will be chosen according to Lemma \ref{Lem:local-reparametrization})
\begin{align}
\frac{1}{r-1}\left(\frac{1}{q}\int\log\|Dg^q\|{\rm d}\mu-\lambda^+(\mu,g)\right)&<\frac{\delta}{5}; \label{e.q-Lyapunov} \\
\frac{1}{r-1}\left(\frac{1}{q}\int\log\|Dg^{-q}\|{\rm d}\mu-\lambda^+(\mu,g^{-1})\right)&<\frac{\delta}{5}; \label{e.q-Lyapunov-s}\\
 \frac{1}{q}\log\left(q\cdot(\|Df\|_{\rm sup}+\|Df^{-1}\|_{\rm sup}+4)\right)&<\frac{\delta}{10}; \label{e.q-norm} \\
\max\left\{ \frac{1}{q(r-1)},\frac{\log C_r}{q} \right\}&<\frac{\delta}{10}. \label{e.q-Cr} 
\end{align}

\subsubsection{Choose $\varepsilon>0$}

We will choose $\varepsilon>0$ according to Burguet's reparametrization lemma recalled above as Lemma~\ref{Lem:local-reparametrization}.

For the $C^r$ diffeomorphism $f$, we take $\Upsilon_f>0$ such that
$$\max_{1\leq j\leq q}2\|f^{j}\|_{C^{r}}<\Upsilon_f.$$ 
Then, there exists a $C^r$ neighborhood $\mathcal U_{\rm boun}$ such that for any $g\in\mathcal U_{\rm boun}$, the following holds:
$$\max_{1\leq j\leq q} \|g^j\|_{C^{r}}<\Upsilon_f,~\log\|Dg\|_{\sup}\leq \log\|Df\|_{\sup}+1,~\log\|Dg^{-1}\|_{\sup}\leq \log\|Df^{-1}\|_{\sup}+1.$$
Recall the numbers $C_r>0$ and $\varepsilon(\Upsilon_f)>0$ defined by Burguet's Lemma~\ref{Lem:local-reparametrization}.
We take $\varepsilon>0$  and $N_0\in\mathbb N$ such that (where $r(M)$ is a positive constant such that $\exp_x$ maps $T_xM(2r(M))$ diffeomorphically onto a neighborhood of $B(x,r(M))$ for every $x\in M$)
\begin{equation}\label{e.bound-varepsilon}
	0<\varepsilon<\frac{\varepsilon(\Upsilon)}{10},~~\max_{1\le j\le q}\{\|Df^j\|_{\rm sup}\}\cdot \varepsilon<\frac{r(M)}{10},~~r_f(\frac{\varepsilon}{2},N_0)<\exp\big((h_{\rm top}(f)+\delta)N_0\big).
\end{equation}
By reducing $\mathcal U_{\rm boun}$ if necessary, we can assume that for any $g\in \mathcal U_{\rm boun}$
\begin{equation}\label{e.bound-U}
	r_g(\varepsilon,N_0)<\exp\big((h_{\rm top}(f)+\delta)N_0\big)
\end{equation}
We take $C_0\in\mathbb N$ such that
\begin{equation}\label{eq:C0}
	C_0>\frac{2 h_{\rm top}(f)}{\delta}N_0.
\end{equation}
We denote $\cU'=\cU_{\rm ent}\bigcap \cU_{\rm con} \bigcap \cU_{\rm boun}$.
Recall the choice of $h$ implies that
\begin{equation}\label{eq:hU'}
	\forall g\in \cU',~h\in \left(
	\frac{\lambda_{\min}(g)}{r},h_{\rm top}(g)\right)\cap \left(4\delta+(1-\frac{\alpha}{2})h_{\rm top}(f), h_{\rm top}(f)\right).
\end{equation}

\subsection{Choose a typical set $K$}\label{Sec:choose-several}
Fix  $g\in \cU'$ and let $\mu$ be an ergodic measure of $g$ with $h_{\mu}(g)=h_{\mu}(g^{-1})>h$. 
Let $\xi$ be a measurable partition subordinate to $W^{u}$, with respect to the measure $\mu$, and let $\{\mu_{\xi(x)}\}$ denote the family of conditional measures for $\xi$.
Similarly, by considering $g^{-1}$, let $\xi^{s}$ be a measurable partition subordinate to $W^{s}$ of $\mu$, and let $\{\mu_{\xi^s(x)}\}$ denote the family of conditional measures for $\xi^s$.  
Recall that the $\delta$ was chosen in Section \ref{SEC:delta}.

We choose a subset $K:=K(g,\mu)$ that satisfies the following properties:
\begin{enumerate}
\item[(1)] $\mu(K)>1-\frac{\alpha}{2}$ and $\frac{1}{n}\sum_{j=0}^{n-1}\delta_{g^j(x)}\to\mu$ uniformly for $x\in K$;
\item[(2)] as $n\rightarrow \pm \infty$, $\frac{1}{n}\log \|Dg^n|_{E^u(x)}\| $ and $\frac{1}{n}\log \|Dg^n|_{E^s(x)}\| $ converge uniformly to $\lambda^{u}(\mu,g)$ and  $\lambda^{s}(\mu,g)$ on $K$.
\item[(3)]  for every $x\in K$, there are $C^r$ strongly $\varepsilon$-bounded curves $\sigma:[-1,1]\rightarrow M$ and $\sigma^s:[-1,1]\rightarrow M$ such that $\xi(x)\subset {\rm Image}(\sigma)\subset W^u_{\loc}(x)$ and $\xi^s(x)\subset {\rm Image}(\sigma^s)\subset W^s_{\loc}(x)$;
\item[(4)] for every $x\in K$, for every subset $Z$ with $\mu_{\xi(x)}(W^u_{\loc}(x)\cap Z)>0$, one has
$$h_{\mu}(g)\leq \lim_{\rho\rightarrow 0}\liminf_{n\rightarrow +\infty} \frac{1}{n}\log r_{g}(n,\rho, W^u_{\loc}(x)\cap Z)+\delta;$$
\item[(5)] for every $x\in K$, for every subset $Z$ with $\mu_{\xi^s(x)}(W^s_{\loc}(x)\cap Z)>0$, one has
$$h_{\mu}(g)=h_{\mu}(g^{-1})\leq \lim_{\rho\rightarrow 0}\liminf_{n\rightarrow +\infty} \frac{1}{n}\log r_{g^{-1}}(n,\rho, W^s_{\loc}(x)\cap Z)+\delta.$$
\item[(6)] for every $x\in K$, there exists a sequences of hyperbolic periodic points $\{p_n\}$ such that each $p_n$ is contained in a hyperbolic basic set $\Lambda_n$ satisfying the following properties:
\begin{itemize}
\item $p_n$ converges to $x$, and for each $n$, the orbit of $p_n$ is homoclinically related with $\mu$;
\item the entropy and Lyapunov exponents of  $\Lambda$  converge to those of  $\mu$:
$$\lim_{n\to\infty}h_{\rm top}(g|_{\Lambda_n})=h_\mu(g),~ \lim_{n\to\infty}\lambda^s(\Lambda_n,g)=\lambda^s(\mu,g),~\lim_{n\to\infty}\lambda^u(\Lambda_n,g)=\lambda^u(\mu,g);$$
\item for every $n$, the unstable manifolds of the orbit of $p_n$ accumulate to $W^u({\rm Orb}(x))$:
 $${\rm Closure}(W^u({\rm Orb}(x)))\subset {\rm Closure}(W^u({\rm Orb}(p_n)));$$
\item for every $n$, the stable manifolds of the orbit of $p_n$ accumulate to $W^s({\rm Orb}(x))$:
$${\rm Closure}(W^s({\rm Orb}(x)))\subset {\rm Closure}(W^s({\rm Orb}(p_n)));$$
\end{itemize}
\end{enumerate}
 \begin{Remark}
 	Item (1), Item (2), and Item (3) can be realized on a set of arbitrarily large $\mu$-measure.
 	Item (4) and Item (5) follow directly from Theorem \ref{Thm:Entropy-bound} by considering $g$ and $g^{-1}$.
 	Item (6) is based on katok shadowing lemma and its extensions (see Section \ref{Sec:Katok-shadowing}). 
 	We can assume that  $W^u(x)$ intersect $W^s({\rm Orb}(p_n))$ transversely, and $W^s(x)$ intersect $W^u({\rm Orb}(p_n))$ transversely for every $x\in K$ and every $n>0$, then the third and fourth statements of Item (6) follow.
 \end{Remark}
 
\subsection{Bounding the entropy of $\mu$ at a fixed scale}

We fix a point $x_0\in K$, and consider $\sigma:[-1,1]\rightarrow M$ be a strongly $\varepsilon$-bounded curve with $\xi(x_0)\subset \sigma_*\subset W^u_{\loc}(x_0)$. 
We apply Burguet's reparametrization lemma (more precisely, our Lemma~\ref{Lem:reparametrization-up-to-m}) using the ergodic properties ensured by the choice of $K$.

\begin{Proposition}\label{Pro:bound-reparametrizations}
	Let $\sigma$ be chosen as above, for every $n>0$ and every $x\in \sigma_*$, there is a family of reparametrizations $\Gamma_n=\Gamma_n(B_n(x,\varepsilon,g)\cap W^u_{\rm loc}(x_0)\cap K)$ such that	
	\begin{enumerate}
		\item[(1)] for every $\gamma\in \Gamma_n$, one has that $\|g^i\circ\sigma\circ\gamma\|\le1$ for any $1 \le i\le n$;
		
		\item[(2)] $\sigma^{-1}(K\cap W^u_{\rm loc}(x_0)\cap B_n(x,\varepsilon,g))\subset\bigcup_{\gamma\in\Gamma_n}\gamma([-1,1])$;
		
		\item[(3)] $\limsup\limits_{n\to\infty}\dfrac{1}{n}\log \#\Gamma_n\leq\delta$ (recall that the $\delta$ was chosen in Section \ref{SEC:delta}).
	\end{enumerate}
\end{Proposition}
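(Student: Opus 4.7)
The plan is to iterate Yomdin-Burguet's reparametrization lemma (Lemma~\ref{Lem:reparametrization-up-to-m}) in $m$ blocks of length $q$, covering $K\cap W^u_{\loc}(x_0)\cap B_n(x,\varepsilon,g)$ by reparametrizations classified according to their $q$-block signatures. Write $n=mq+s$ with $0\le s<q$. For each $z$ in the above set, let $I_m(z):=((\chi_k^+(z),\chi_k(z)))_{k=0}^{m-1}$ be the signature associated to the $m$ compositions of $g^q$ along the curve $\sigma$, as defined in Section~4.3 with $i_j=q$. Applying Lemma~\ref{Lem:reparametrization-up-to-m} to each fixed signature produces a family of reparametrizations of cardinality at most $C_r^m\exp\bigl(\tfrac{1}{r-1}\sum_{k}(\chi_k^+-\chi_k)\bigr)$, and the family $\Gamma_n$ will be the union over all realized signatures, with a single final refinement absorbing the residual $s<q$ iterations.

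The number of realizable signatures is easy to control: each integer $\chi_k^{\pm}$ lies in an interval of length at most $C_1:=q(\log\|Dg\|_{\sup}+\log\|Dg^{-1}\|_{\sup})+4$, so there are at most $C_1^{2m}$ signatures, and \eqref{e.q-norm} gives $\log C_1<q\delta/10$. Since $T_z\sigma_*=E^u(z)$ for $z\in W^u_{\loc}(x_0)$ and this identification is preserved by iteration, the chain rule identifies $\sum_k\chi_k(z)$ with $\log\|Dg^{mq}|_{E^u(z)}\|$ up to an $O(m)$ ceiling error, and Item~(2) in the choice of $K$ gives the uniform lower bound $\sum_k\chi_k(z)\ge mq\lambda^u(\mu,g)-o(n)$ on $K$.

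The main obstacle is the matching uniform upper bound on $\sum_k\chi_k^+(z)\le\sum_k\log\|Dg^q(g^{kq}z)\|+m$: \eqref{e.q-Lyapunov} tells us that $\int\log\|Dg^q\|\,d\mu\le q\lambda^u(\mu,g)+q(r-1)\delta/5$, but Item~(1) of $K$ only provides uniform $g$-Birkhoff convergence of the continuous function $\log\|Dg^q\|$, whereas we need uniform convergence of the $g^q$-Birkhoff subsequence along orbits in $K$. These differ when $\mu$ fails to be $g^q$-ergodic. I resolve this by augmenting the defining properties of $K$: applying the Birkhoff ergodic theorem to the $g^q$-action together with Egorov's theorem and Markov's inequality (using that the pointwise $g^q$-Birkhoff limit of $\log\|Dg^q\|$ always dominates $q\lambda^u(\mu,g)$ while its $\mu$-average is at most $q\lambda^u(\mu,g)+q(r-1)\delta/5$), one can shrink $K$ while retaining $\mu(K)>1-\alpha/2$ so that $\tfrac{1}{m}\sum_k\log\|Dg^q(g^{kq}z)\|\le q\lambda^u(\mu,g)+q(r-1)\delta/5+o(1)$ uniformly on $K$ as $m\to\infty$.

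Combining these bounds gives $\sum_k(\chi_k^+-\chi_k)\le mq(r-1)\delta/5+m+o(n)$ uniformly on $K$. Dividing by $r-1$ and using \eqref{e.q-Cr} in the forms $\log C_r/q<\delta/10$ and $1/(q(r-1))<\delta/10$ to absorb both $m\log C_r$ and $m/(r-1)$ into terms bounded by $mq\delta/10$, and then adding the signature overhead $2m\log C_1<mq\delta/5$ together with an $O(q)$ cost from the final $s<q$ iterates, we obtain $\limsup_{n\to\infty}\tfrac{1}{n}\log\#\Gamma_n\le\delta$, which is Conclusion~(3). Conclusion~(2) is Lemma~\ref{Lem:reparametrization-up-to-m}(1) applied block-wise and extended trivially across the final $s$ steps. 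Conclusion~(1), namely strong $1$-boundedness of $D(g^i\circ\sigma\circ\gamma)$ for all $1\le i\le n$, follows from the strong $2\varepsilon$-boundedness produced by Lemma~\ref{Lem:reparametrization-up-to-m}(2) at each block endpoint $kq$, combined with the choice $\max_{1\le j\le q}\|Dg^j\|_{\sup}\cdot 2\varepsilon<1$ from \eqref{e.bound-varepsilon}, which propagates the $C^1$ bound through intermediate iterates.
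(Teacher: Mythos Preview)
Your overall architecture is the same as the paper's: stratify $K\cap W^u_{\loc}(x_0)\cap B_n(x,\varepsilon,g)$ by the block signatures $(\chi_k^+,\chi_k)$, apply Lemma~\ref{Lem:reparametrization-up-to-m} on each stratum, and bound the total via \eqref{e.q-Lyapunov}--\eqref{e.q-Cr}. Your treatment of Conclusions~(1) and~(2), of the signature overhead through \eqref{e.q-norm}, and of the lower bound $\sum_k\chi_k(z)\ge\log\|Dg^{mq}|_{E^u(z)}\|$ via the chain rule along the one-dimensional bundle $E^u$, are all correct and parallel to the paper.

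The gap is in the Markov step for the upper bound on $\sum_k\chi_k^+(z)$. Write $\Phi(z)$ for the pointwise $g^q$-Birkhoff limit of $\log\|Dg^q\|$. You correctly observe $\Phi\ge q\lambda^u(\mu,g)$ $\mu$-a.e.\ and $\int\Phi\,d\mu\le q\lambda^u(\mu,g)+q(r-1)\delta/5$, but Markov applied to $\Phi-q\lambda^u\ge 0$ only yields $\mu\{\Phi>q\lambda^u+t\}<q(r-1)\delta/(5t)$; forcing this below $\alpha/2$ requires $t$ of order $q(r-1)\delta/\alpha$, not $q(r-1)\delta/5$. When $\mu$ splits into several $g^q$-ergodic components, $\Phi$ is piecewise constant and the sublevel set $\{\Phi\le q\lambda^u+q(r-1)\delta/5\}$ can have measure below $1-\alpha/2$, so one cannot shrink $K$ as you claim. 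With the honest Markov threshold the final exponent bound is of order $\delta/\alpha$ rather than $\delta$, which breaks Propositions~\ref{Pro:metric-entropy-K} and~\ref{Pro:topological-cross} with the constants already fixed in Section~\ref{Sec:MRT}. Your route can be repaired by returning to Lemma~\ref{Lem:Lyapunov-quantity} and choosing $q$ so that \eqref{e.q-Lyapunov} holds with right-hand side $c\alpha\delta$ for a suitable absolute $c$; since $\alpha$ is fixed before $q$, this is legitimate, but it is a change to the setup, not merely to $K$.

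The paper avoids this entirely by a pigeonhole over shifts. Item~(1) of $K$ already controls the $g$-Birkhoff average $\tfrac1n\sum_{j=0}^{n-1}\log\|Dg^q(g^jz)\|$ uniformly on $K$; decomposing $\{0,\dots,mq-1\}$ into the $q$ arithmetic progressions $\{qk+c:0\le k<m\}$ shows that for each $z\in K$ and each $n$ some shift $c=c_n(z)\in\{0,\dots,q-1\}$ satisfies $\tfrac1n\sum_{k=0}^{m-1}\log\|Dg^q(g^{qk+c}z)\|\le\tfrac1q\int\log\|Dg^q\|d\mu+\delta(r-1)/5$. One then applies Lemma~\ref{Lem:reparametrization-up-to-m} with the sequence $(g^{c},g^q,\dots,g^q)$ and stratifies additionally by the value of $c$, at the cost of an extra factor $q$ in the count. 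This delivers the bound $\delta$ directly, with no $\alpha$-dependence and no need to augment $K$ or touch the constants chosen earlier.
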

\begin{proof}
	By the choice of $K$, there exists $n_K$ such that for every $n\geq n_K$ and every $x\in K$
	\begin{align}
	\frac{1}{n}\sum_{j=0}^{n-1}\log\|Dg^q(g^{j}(x))\|&\leq \int \log\|Dg^q\|{\rm d}\mu+\frac{\delta(r-1)}{5};\\
	\frac{1}{n}\log\|Dg^n|E^u(x)\|&\geq  \lambda^u(\mu,g)-\frac{\delta(r-1)}{5}. \label{eq:Ly1}
	\end{align}
	Consider $n=mq+p$ with $mq>n_K$ and $0\leq p <q$. Then, for $x\in K$, there exists $0\leq c_n(x)<q$ such that 
	\begin{equation}\label{eq:Ly2}
		\frac{1}{n}\sum_{j=0}^{m-1} \log\|Dg^q(g^{qj+c_n(x)}(x))\| \leq \frac{1}{q}\int \log\|Dg^q\|{\rm d}\mu+\frac{\delta(r-1)}{5}.
	\end{equation}
	 Fix $0\leq c<q$, for each $x\in K$ with $c_n(x)=c$, we consider 
	 \begin{align*}
   &\chi^+_{c}(x):=\lceil\log\|Dg^{c}(x)\|\rceil,~\chi_{c}(x):=\lceil \log\|Dg^{c}|_{T_{x} \sigma_*}\|\rceil,~c>0;~~\chi^+_{c}(x)=\chi_{c}(x)=0,~c=0;\\
   &\chi^+_{c,j}(x):=\lceil\log\|Dg^{q}(g^{qj+c}(x))\|\rceil,~\chi_{c,j}(x):=\lceil \log\|Dg^{q}(g^{qj+c}(x))|_{T_{g^{qj+c}(x)} (g^{qj+c}\sigma)_*} \|\rceil, 0 \leq  j< m.
	 \end{align*}
	 Fix $(m+1)$-diffeomorphisms $(g^{c},g^q,\cdots,g^q)$ and $2(m+1)$-integers 
	 $$(\chi^+_{c},\chi_{c},\chi^+_{c,0},\chi_{c,0},\cdots,\chi^+_{c,m-1},\chi_{c,m-1}).$$
	 Note that for every $0\leq c<q$
	 $$\max \{\chi^+_{c}(z),\chi_{c}(z)\}\leq c(\log\|Dg\|+\log \|Dg^{-1}\|)+2\leq q(\log\|Dg\|+\log \|Dg^{-1}\|)+2$$
	 and for every $0\leq j<m$
	 $$\max \{\chi^+_{c,j}(z),\chi_{c,j}(z)\}\leq q(\log\|Dg\|+\log \|Dg^{-1}\|)+2.$$
	 Hence, we have
	 \begin{align*}
	 	&\# \{(\chi^+_{c},\chi_{c},\chi^+_{c,0},\chi_{c,0},\cdots,\chi^+_{c,m-1},\chi_{c,m-1}):~ \exists z\in K,~\text{such that}~c_n(x)=c, \\
	 	&~~~~~~~~~~ \chi^+_{c}(z)=\chi^+_{c},\chi_{c}(z)=\chi_{c},~\chi^+_{c,j}(z)=\chi^+_{c,j},\chi_{c,j}(z)=\chi_{c,j},~\forall 0\leq j<k\} \\
	 	  \leq   &q\cdot \left(q(\log\|Dg\|+\log \|Dg^{-1}\|)+2\right)^{2m+2} \\
	 	  \leq   &q\cdot \left(q(\log\|Df\|+\log \|Df^{-1}\|+4)\right)^{2m+2}.
	 \end{align*}
{Given integer indices $c,\chi_c,\chi_c^+,\chi_{c,j},\chi_{c,j}^+$, we introduce}
	 $$\Sigma:=\{z\in K: c_n(x)=c,~\chi^+_{c}(z)=\chi^+_{c},\chi_{c}(z)=\chi_{c},~\chi^+_{c,j}(z)=\chi^+_{c,j},\chi_{c,j}(z)=\chi_{c,j},~\forall 0\leq j<k \}.$$
{Note that we omit the indices from the notation $\Sigma$.}
 Therefore, by Lemma \ref{Lem:reparametrization-up-to-m}, there exists a family of reparametrizations $\Gamma'(\Sigma)$ such that 
    \begin{itemize}
    	\item $\# \Gamma'(\Sigma) \leq C_r^{m+1} \exp \left( \frac{1}{r-1}(\chi_{c}^{+}-\chi_c+\sum_{j=0}^{m-1}(\chi^{+}_{c,j}-\chi_{c,j})\right)$,
    	\item $\Sigma \cap B_n(x,\varepsilon,g)\subset \bigcup_{\gamma \in \Gamma'(\Sigma)} \gamma( [-1,1])$; and for every $\gamma \in \Gamma'(\Sigma)$
    	\item if $n>mq+c$, then $g^{k}\circ \sigma \circ \gamma$ is strongly $2\varepsilon$-bounded for $k\in \{0,c,c+q,\cdots,c+mq\}$,
    	\item if $n\leq mq+c$, then $g^{k}\circ \sigma \circ \gamma$ is strongly $2\varepsilon$-bounded for $k\in \{0,c,c+q,\cdots,c+(m-1)q\}$.
    \end{itemize}
     By the choice of $\varepsilon$ (Condition~\eqref{e.bound-varepsilon}), for any $\gamma\in \Gamma'(\Sigma)$, one has that
     \begin{equation}\label{e.up-to-n-initial}
     	\|D\big(g^j\circ\sigma\circ\gamma\big)\|\le 1,~~~\forall j=0,1,\cdots,n.
     \end{equation}
Now, we estimate $\# \Gamma'(\Sigma)$. For each $z\in \Sigma\subset K$ and $c_n(z)=c$, one has
\begin{align*}
	&\chi_{c}^{+}-\chi_c+\sum_{j=0}^{m-1}(\chi^{+}_{c,j}-\chi_{c,j})\\
	=~&\chi_{c}^{+}(z)-\chi_c(z)+\sum_{j=0}^{m-1}(\chi^{+}_{c,j}(z)-\chi_{c,j}(z))\\
    \leq~&m+1+\sum_{j=0}^{m-1} \log\|Dg^q(g^{qj+c}(z))\|-\log \|Dg^{mq}|E^u(z)\|+\log \|Dg^c(z)\|\\
	\leq~&m+1+n\left(\frac{1}{q}\int \log\|Dg^q\|{\rm d}\mu-\lambda^u(\mu,g)\right)+\frac{2n\delta(r-1)}{5}+q\log \|Dg\|, ~~\text{by \eqref{eq:Ly1}, \eqref{eq:Ly2}},\\
	\leq~&m+1+\frac{3n\delta(r-1)}{5}+q\log \|Dg\|,~~\text{by \eqref{e.q-Lyapunov}}.
\end{align*}
Note that the number of sets of the form  $\Sigma$ is at most
$$q\cdot \left(q(\log\|Df\|+\log \|Df^{-1}\|+4)\right)^{2m+2}.$$
Let $\Gamma_n$ denote the union of $\Gamma'(\Sigma)$ for all possible $\Sigma$. 
Then, for $n$ large enough we have
$$\# \Gamma_n\leq C_r^{m+1}\exp(\frac{3n\delta}{5})\cdot \exp (\frac{m+1+q\log \|Dg\|}{r-1})\cdot q\cdot \left(q(\log\|Df\|+\log \|Df^{-1}\|+4)\right)^{2m+2}.$$
Therefore, we have
\begin{align*}
	\limsup_{n\rightarrow \infty}\frac{1}{n} \log \# \Gamma_n&\leq \frac{\log C_r}{q}+\frac{3\delta}{5}+\frac{1}{q(r-1)}+\frac{2\log \left(q(\log\|Df\|+\log \|Df^{-1}\|+4)\right)}{q} \\
	&\leq \frac{\delta}{10}+\frac{3\delta}{5}+\frac{\delta}{10}+\frac{\delta}{5}\leq \delta,~~\text{by \eqref{e.q-norm}, \eqref{e.q-Cr}}.
\end{align*}
This completes the proof of the proposition.
\end{proof}

\begin{Remark}
	For the fixed point $x_0\in K$, consider a bounded curve $\sigma^s:[-1,1]\rightarrow M$ such that $\sigma^s_*=W^s_{\loc}(x_0)$, and the  $C^r $ diffeomorphism  $g^{-1}$.
	In this setting, we can also establish that a family of reparametrizations $\Gamma_n^s$ satisfies the conclusions of Proposition \ref{Pro:bound-reparametrizations}.
\end{Remark}		
\begin{Proposition}\label{Pro:metric-entropy-K}
For every $x_0\in K$ and every $Z\subset K$ with $\mu_{\xi(x_0)}(Z \cap W^u_{\rm loc}(x_0))>0$, we have 
$$h_{\mu}(g)\le \liminf_{n\to\infty}\frac{1}{n}\log r_{g}(\varepsilon,n, Z \cap W^u_{\rm loc}(x_0))+2\delta.$$
\end{Proposition}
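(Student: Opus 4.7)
The plan is to trade the $(\varepsilon,n)$-spanning set at the fixed scale $\varepsilon$ for a $(\rho,n)$-spanning set at an arbitrarily small $\rho>0$ by applying Proposition~\ref{Pro:bound-reparametrizations}. Each ``coarse'' Bowen ball of radius $\varepsilon$ intersected with $W^u_{\rm loc}(x_0)\cap K$ is covered by at most $\#\Gamma_n$ curves of the form $\sigma\circ\gamma$, and each such curve stays short after every iterate $g^i$ for $0\le i\le n$ (item~(1) of that proposition), so it can be $(\rho,n)$-spanned by $O(1/\rho)$ points. Since property~(4) of $K$ already gives the entropy bound in the $\rho\to 0$ limit at cost $+\delta$, and the reparametrization count grows at exponential rate at most $\delta$ (item~(3) of Proposition~\ref{Pro:bound-reparametrizations}), combining the two costs an additional $+\delta$ for a total of $+2\delta$, which is exactly the error stated.

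In detail, I fix $\rho\in(0,\varepsilon)$ and take an $(\varepsilon,n)$-spanning set $Y\subset Z\cap W^u_{\rm loc}(x_0)$ of minimum cardinality. For each $x\in Y\subset K$, applying Proposition~\ref{Pro:bound-reparametrizations} to the bounded parametrization $\sigma$ of $W^u_{\rm loc}(x_0)$ provided by property~(3) of $K$ yields a family $\Gamma_n(x)$ whose reparametrizations cover $K\cap W^u_{\rm loc}(x_0)\cap B_n(x,\varepsilon,g)$; since $Z\subset K$, it also covers $Z\cap W^u_{\rm loc}(x_0)\cap B_n(x,\varepsilon,g)$. Item~(1) of the proposition bounds the derivative of $g^i\circ\sigma\circ\gamma$ by $1$ for all $0\le i\le n$, so a uniform partition of $[-1,1]$ into intervals of length $\rho/2$ gives at most $\lceil 4/\rho\rceil+1$ points that $(\rho/2,n)$-span the arc $\sigma\circ\gamma([-1,1])$. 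Replacing each such center, whenever possible, by a nearby point of $Z\cap W^u_{\rm loc}(x_0)$ inside the same $(\rho/2,n)$-Bowen ball upgrades this to an $(\rho,n)$-spanning set of $Z\cap W^u_{\rm loc}(x_0)$ of cardinality at most
\[
r_g(\varepsilon,n,Z\cap W^u_{\rm loc}(x_0))\cdot\max_{x\in Y}\#\Gamma_n(x)\cdot\bigl(\lceil 4/\rho\rceil+1\bigr).
\]

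Taking $\tfrac{1}{n}\log$ and $\liminf_{n\to\infty}$, invoking item~(3) of Proposition~\ref{Pro:bound-reparametrizations}, and using that the $O(1/\rho)$ factor is independent of $n$, yields
\[
\liminf_{n\to\infty}\tfrac{1}{n}\log r_g(\rho,n,Z\cap W^u_{\rm loc}(x_0))\le \liminf_{n\to\infty}\tfrac{1}{n}\log r_g(\varepsilon,n,Z\cap W^u_{\rm loc}(x_0))+\delta.
\]
Letting $\rho\to 0$ and applying property~(4) of $K$ then produces the asserted inequality with $+2\delta$. The statement for $W^s$ follows by running the same argument for $g^{-1}$ and using property~(5) of $K$ in place of property~(4). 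The subtlest point is the uniformity in $x\in K$ of the rate bound $\limsup_n\tfrac{1}{n}\log\#\Gamma_n(x)\le\delta$: this holds because the threshold $n_K$ appearing in the proof of Proposition~\ref{Pro:bound-reparametrizations} comes from the uniform Birkhoff convergence on $K$ (properties~(1) and~(2) of $K$) and is therefore independent of $x$. The minor ``replacement'' step used to land spanning centers in $Z\cap W^u_{\rm loc}(x_0)$ is routine and does not affect any exponential rate.
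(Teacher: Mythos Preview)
Your proposal is correct and follows essentially the same route as the paper: use property~(4) of $K$ to reduce to arbitrarily small scale $\rho$ at cost $+\delta$, then bridge from scale $\varepsilon$ to scale $\rho$ by covering each $(n,\varepsilon)$-Bowen ball with the $\#\Gamma_n$ reparametrized arcs from Proposition~\ref{Pro:bound-reparametrizations} and subdividing each arc into $O(1/\rho)$ pieces via the derivative bound in item~(1), at cost $+\delta$. Your explicit remarks on the uniformity of $\#\Gamma_n(x)$ over $x\in K$ and on landing the spanning centers back in $Z\cap W^u_{\rm loc}(x_0)$ are welcome clarifications that the paper leaves implicit.
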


\begin{proof}
By the choice of $K$, one has that
$$h_{\mu}(g)\le\lim_{\beta \to 0}\liminf_{n\to\infty}\frac{1}{n}\log r_{g}(\beta,n,Z\cap W^u_{\rm loc}(x_0))+\delta.$$
Now, consider an $(n,\varepsilon)$-separated set $\{y_1,\cdots,y_\ell\}$ of $Z \cap W^u_{\rm loc}(x_0)$ with minimal cardinality. 

For each $x=y_j$ with $1\le j\le\ell$, by Proposition~\ref{Pro:bound-reparametrizations}, there exists a family of reparametrizations $\Gamma_n=\Gamma_n(B_n(x,\varepsilon,g)\cap W^u_{\rm loc}(x_0)\cap K)$ such that
\begin{enumerate}
\item for every $\gamma\in \Gamma_n$, one has that  $\|g^i\circ\sigma\circ\gamma\|\le1,~ \forall 0\le i\le n-1$;
\item $\sigma^{-1}(K\cap W^u_{\rm loc}(x_0)\cap B_n(x,\varepsilon,g))\subset\bigcup_{\gamma\in\Gamma_n} \gamma([-1,1])$;
\item $\limsup\limits_{n\to\infty}\dfrac{1}{n}\log \#\Gamma_n\leq \delta$.
\end{enumerate}
Given $\beta>0$, choose a family of affine reparametrizations $\Theta$ with $\#\Theta\le \lceil \frac{2}{\beta}\rceil+1$ such that for any $\theta\in \Theta$, one has $\theta'<\beta/2$. We consider a new family of affine reparametrizations
$$\Gamma_n^*:=\Gamma_n\times \Theta:=\{\gamma\circ\theta:~\gamma\in\Gamma_n,~\theta\in\Theta\}.$$
Then, we have
$$\#\Gamma_n^*\le \#\Gamma_n\times \# \Theta\le ([2/\beta]+1 )\#\Gamma_n.$$
Since $\|D(g^i\circ\sigma\circ\gamma\circ\theta) \|\leq \frac{\beta}{2}$ for every $0\leq i<n$, the length of $g^i\circ\sigma\circ\gamma\circ\theta$ is smaller than $\beta$. 
Hence, for each $1\le j\le\ell$, choose $Z_j:=\{z_{j,k}\}$ with $1\le k\le \#\Gamma_n^*(B_n(x,\varepsilon,g))$ such that each $z_{j,k}$ is contained in one ${\rm Image}(g^i\circ\sigma\circ\gamma\circ\theta)$. Since the length of $g^i\circ\sigma\circ\gamma\circ\theta$ is smaller than $\beta$, the union $\widehat Z_n=\bigcup_{j=1}^\ell Z_j$ forms an $(n,\beta)$-spanning set. Thus, we have
$$r_{g}(\beta,n,Z\cap W^u_{\rm loc}(x_0))\leq \#\widehat Z_n \leq r_{g}(\varepsilon,n, Z \cap W^u_{\rm loc}(x_0))\cdot \#\Gamma_n^*\leq r_{g}(\varepsilon,n, Z \cap W^u_{\rm loc}(x_0))\cdot  \lceil \frac{2}{\beta}\rceil \cdot \#\Gamma_n.$$
Taking logarithms and limits, we conclude
$$h_{\mu}(g)\le\liminf_{n\to\infty}\frac{1}{n}\log r_{g}(\varepsilon,n, Z \cap W^u_{\rm loc}(x_0))+2\delta.$$
Thus the proposition is proved.
\end{proof}
\begin{Remark}
By considering the dynamics of $g^{-1}$, for every $x_0\in K$ and every $Z\subset K$ with $\mu_{\xi^s(x_0)}(Z \cap W^s_{\rm loc}(x_0))>0$, we have
$$h_{\mu}(g)=h_{\mu}(g^{-1})\leq \liminf_{n\rightarrow \infty}\frac{1}{n}\log r_{g^{-1}}(\varepsilon,n,Z\cap W^s_{\rm loc}(x_0))+2\delta.$$
\end{Remark}

\section{The main reduction theorem - geometry}\label{Sec:MRT-geo}

Section~\ref{Sec:MRT} has reduced the proof of Theorem~\ref{Thm:main-reduction} to some counting of reparametrizations. 
In this section we conclude the proof of Theorem~\ref{Thm:main-reduction} by using some geometric arguments completed by applications of Sard's lemma from \cite{BCS22}. 
We argue carefully to be able to conclude using just $\htop(f)>\lambda_{\min}(f)/r$.

\subsection{$su$-Quadrilaterals}
We adopt the definition of $su$-quadrilaterals from Buzzi-Crovisier-Sarig \cite[Definition 2.18]{BCS22}. 

 \begin{figure}[htp]
	\centering
	\includegraphics[width=0.3\linewidth]{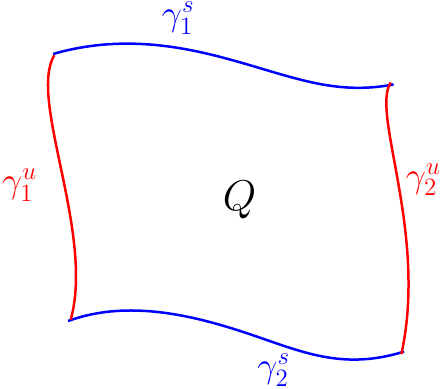}
	\caption{A $su$-quadrilateral $Q$ and its four boundary segments: $\gamma^s_1,\gamma^s_2$ (stable) and $\gamma^u_1,\gamma^u_2$ (unstable). }
	\label{Fig:Q}
\end{figure}

For a surface diffeomorphism $f$, given a hyperbolic periodic orbit $O$ of $f$, an $su$-quadri\-la\-te\-ral associated to $O$ is an open disk $Q$ in $M$ whose boundary is composed of four compact curves (or segments):
$$\partial Q=\gamma^s_1\cup \gamma^s_2\cup \gamma^u_1\cup \gamma^u_2,$$
where the stable boundary segments $\gamma^s_1$ and $\gamma^s_2$ are segments of the stable manifold of $O$ and  the unstable boundary segments $\gamma^u_1$ and $\gamma^u_2$ are segments of the unstable manifold of $O$. 
See Figure~\ref{Fig:Q}.

We will use without further comment the following simple topological fact. 
Recall the definition of topological crossing in Section~\ref{Sec:Katok-shadowing}.

\begin{Lemma}
Let $Q$ be some $su$-quadrilateral. If a segment $\gamma$ of some unstable manifold intersects both $Q$ and $M\setminus\overline{Q}$, then $\gamma$ topologically crosses at least one of the two stable boundary segments of $Q$.

The symmetric statement obtained by exchanging ``unstable'' and ``stable'' is also true.
\end{Lemma}

For $g$ close to $f$, denote by $O_g$ the hyperbolic continuation of $O$ with respect to $g$. 
Since the stable manifolds and the unstable manifolds of hyperbolic periodic orbits vary continuously with respect to diffeomorphisms, the $su$-quadrilaterals also have continuations. 
We denote by $Q(g)$ the continuation of $Q$ with respect to $g$.
 
We are in the setting of Theorem~\ref{Thm:main-reduction}. 
Recall that $\nu$ is an ergodic MME belonging to the ergodic decomposition of some MME $\mu_0$.
Without loss of generality, \textit{we assume that $$\lambda_{\min}(f)=\lambda^s(f)$$ in the next subsections}, otherwise we only need to consider $f^{-1}$. 
We use the following lemma, which is closely related to Buzzi-Crovisier-Sarig \cite[Proposition 2.19]{BCS22}. 
We add an additional {property}: the periodic orbit {$O$} is contained in a hyperbolic set whose entropy is large. 
Recall the constant $C_0$ from \eqref{eq:C0}.

\begin{Lemma}\label{Lem:quadrilateral}
In the setting of Theorem~\ref{Thm:main-reduction}, there are $su$-quadrilaterals $Q_1,Q_2,\cdots,Q_N$ associated to hyperbolic periodic orbits $P_1,\cdots,P_N$, which are homoclinically related with $\nu$, such that
$$\nu(\bigcup_{j=1}^N Q_j)>1/2,~~~{\rm Diam}(Q_i)<\frac{\varepsilon}{\|Df\|_{\rm sup}^{C_0}+2}<\varepsilon,~\forall 1\le i\le N.$$
Moreover, $\nu$ is homoclinically related with a horseshoe $\Lambda$ containing each $P_i$ for every $1\leq i\leq N$, and
$$\frac{h_{\rm top}(f|_\Lambda)}{\lambda^s(\Lambda,f)}>\frac{1}{r}.$$
\end{Lemma}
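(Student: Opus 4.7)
The lemma combines two ingredients: the construction of small $su$-quadrilaterals capturing at least half of the mass of $\nu$ (which is essentially \cite[Proposition 2.19]{BCS22}) and a new conclusion that a single hyperbolic basic set $\Lambda$ can be chosen to contain all of the associated periodic orbits while satisfying the ratio condition $h_{\rm top}(f|_\Lambda) > \lambda^s(\Lambda, f)/r$. I would follow BCS for the quadrilateral construction, then invoke the Katok horseshoe approximations recalled in Section~\ref{Sec:Katok-shadowing} together with the hypothesis $h_{\rm top}(f) > \lambda_{\min}(f)/r = \lambda^s(f)/r$ to produce $\Lambda$.

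\textbf{Quadrilaterals.} Since $\nu$ is ergodic of maximal entropy on a surface with $h_\nu(f) > 0$, Ruelle's inequality gives that $\nu$ is hyperbolic. Fix a Pesin block $K' \subset M$ with $\nu(K') > 1 - \alpha/4 > 1/2$ on which local stable and unstable manifolds have uniformly bounded-below size, vary $C^1$-continuously in the base point, and produce uniform local product structure. For each $x \in K'$, the homoclinic relation between $\nu$ and $O$ together with the manifold accumulation statements in Section~\ref{Sec:Katok-shadowing} yield a hyperbolic periodic orbit $P(x)$ homoclinically related to $\nu$ whose unstable and stable manifolds $C^1$-approximate $W^u_{\loc}(x)$ and $W^s_{\loc}(x)$. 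Four nearby heteroclinic intersections then bound an $su$-quadrilateral $Q(x) \ni x$ whose diameter can be made smaller than $\varepsilon/(\|Df\|_{\sup}^{C_0}+2)$ by taking the approximating arcs close enough. A Vitali-type covering argument applied to the family $\{Q(x)\}_{x \in K'}$ selects finitely many quadrilaterals $Q_1, \ldots, Q_N$, associated to periodic orbits $P_1, \ldots, P_N$ all homoclinically related to $\nu$, with $\nu(\bigcup_j Q_j) > 1/2$ and each diameter below the required bound.

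\textbf{The basic set.} By Section~\ref{Sec:Katok-shadowing}, there is an increasing sequence $\{\Lambda_k\}$ of hyperbolic basic sets such that every ergodic measure supported on $\Lambda_k$ is homoclinically related to $\nu$, every periodic orbit homoclinically related to $\nu$ eventually lies in some $\Lambda_k$, and both $h_{\rm top}(f|_{\Lambda_k}) \to h_\nu(f)$ and $\lambda^s(\Lambda_k, f) \to \lambda^s(\nu, f)$. Because $\lambda^s(\nu, f) \leq \lambda^s(f)$ and
$$h_\nu(f) = h_{\rm top}(f) > \frac{\lambda^s(f)}{r} \geq \frac{\lambda^s(\nu,f)}{r},$$
for every sufficiently large $k$ one has $h_{\rm top}(f|_{\Lambda_k}) > \lambda^s(\Lambda_k, f)/r$. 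Choosing $k$ additionally large enough to absorb all of the finitely many periodic orbits $P_1, \ldots, P_N$ inside $\Lambda_k$ then yields the required basic set $\Lambda := \Lambda_k$.

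\textbf{Main obstacle.} The genuinely new content relative to \cite{BCS22} is the simultaneous realization of the ratio condition and the inclusion of the $P_j$ inside one basic set; this is precisely where the large-entropy hypothesis $h_{\rm top}(f) > \lambda_{\min}(f)/r$ enters, and the finiteness of the family $\{P_j\}$ is what allows a monotone Katok approximation to terminate. The quadrilateral construction itself reduces to Pesin theory and $C^1$ manifold continuity, so no $C^\infty$ regularity is lost when transferring the BCS argument to the $C^r$ setting.
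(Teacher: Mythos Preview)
Your proposal is correct and follows essentially the same approach as the paper: the quadrilateral construction is deferred to \cite[Proposition 2.19]{BCS22}, and the basic set is produced via the increasing Katok horseshoe sequence of Section~\ref{Sec:Katok-shadowing} using the inequality $h_\nu(f)=h_{\rm top}(f)>\lambda^s(f)/r\ge \lambda^s(\nu,f)/r$. If anything, you are more explicit than the paper's own proof about why the finitely many $P_j$ can all be absorbed into a single $\Lambda_k$ (the paper leaves this implicit in property~(2) of the horseshoe sequence).
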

\begin{proof}
\cite[Proposition 2.19]{BCS22} gives periodic orbits $P_1',\dots,P_N'$ homoclinically related to the ergodic measure $\nu$ and associated $su$-quadrilaterals $Q_1',\dots,Q_N'$ satisfying the above claims, except for the inclusions of the periodic orbits in the horseshoe. 
Recall that
 $$\frac{h_{\nu}(f)}{\lambda^s(\nu,f)} {\geq} \frac{h_{\rm top}(f)}{\lambda^s(f)}>\frac{1}{r}.$$
By Katok's shadowing lemma (see Section~\ref{Sec:Katok-shadowing}), there exists a horseshoe $\Lambda'$ such that $\lambda^s(\Lambda',f)$ is close to $\lambda^s(\nu,f)$ and $h_{\rm top}(f|_{\Lambda'})$ is close to $h_{\nu}(f){=\htop(f)}$.  
Since the periodic orbits $P_1',\dots,P_N'$ are homoclinically related to $\Lambda$, one can find a larger horseshoe $\Lambda'\supset\Lambda\cup P'_1\cup\dots\cup P'_N$. 
Fix a hyperbolic periodic orbit $P:=P_1=\dots=P_N$  in $\Lambda$.  
By minimality of the dynamical foliations of $\Lambda'$ (the last sentence of Theorem \ref{Thm:hyperbolic-easy}), we see that one can approximate each $su$-quadrilateral $Q_i'$ by a $su$-quadrilateral $Q_i$ associated to $P_i$.
 \end{proof}
 
Recall that $\mu_0=\alpha\nu+(1-\alpha) \nu'$, so we have $\mu_0(\bigcup_{j=1}^N Q_j)>\alpha/2$. 
Choose a neighborhood $U$ of $\mu_0$ such that 
\begin{equation}\label{eq:ChooseU}
	\forall \mu'\in U,~\mu'(\bigcup_{j=1}^N Q_j)>\alpha/2.
\end{equation}
We choose a  $C^r$ neighborhood $\mathcal U_{\rm quad}$ such that for any $g\in\mathcal U_{\rm quad}$, for every $i\in \{1,\cdots,N\}$ the continuation of $P_i$ and $Q_i$ are well defined, denoted by $P_{i,g}$ and $Q_i(g)$.
Moreover, for each $i$, $P_{i,g}$ is homoclinically related with $O_g$ and for every $\mu \in U$ one has
 $$\mu(\bigcup_{j=1}^N Q_i(g))>\alpha/2,~~~{\rm Diam}(Q_i(g))<\frac{\varepsilon}{\|Dg\|_{\rm sup}^{C_0}+1}<\varepsilon,~\forall 1\le i\le N.$$
 Now, we let 
 \begin{equation}
 	\cU=\cU'\cap \cU_{\rm quad}=\cU_{\rm quad} \bigcap \cU_{\rm ent}\bigcap \cU_{\rm con} \bigcap \cU_{\rm boun}
 \end{equation}
 
The following proposition establishes the topological crossing of typical unstable or stable manifolds with the boundary of the $su$-quadrilaterals, which constitutes one of the main ingredients of Theorem \ref{Thm:main-reduction}.
 
 \begin{Proposition}\label{Pro:topological-cross}
 In the setting of Theorem~\ref{Thm:main-reduction}, there exists a set $K_0$ of positive $\mu$-measure such that for every point $x\in K_0$, there are $n\ge 0$ and an $su$-quadrilateral $Q_{i_x}(g)$ satisfying the following 
 \begin{itemize}
 \item $g^n(x)$ is contained in the interior of $Q_{i_x}(g)$;
 \item $W^u(g^n(x))$ intersects both  $Q_{i_x}(g)$ and $M\setminus \overline{Q_{i_x}(g)}$.
 \end{itemize}
 \end{Proposition}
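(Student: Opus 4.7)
The plan is to argue by contradiction, combining Birkhoff's ergodic theorem, a forward ``trapping'' propagation built into the choice of ${\rm Diam}(Q_i(g))$, and the entropy lower bound along unstable leaves in Proposition~\ref{Pro:metric-entropy-K}. Suppose for contradiction $\mu(K_0) = 0$; then for $\mu$-almost every $x \in K$ and every $j \ge 0$ with $g^j(x) \in {\rm int}(Q_{i}(g))$ one has $W^u(g^j(x)) \subset \overline{Q_{i}(g)}$. The mild subtlety that $\mu$ might see $\bigcup_i \partial Q_i(g)$ is handled by passing to a $\mu$-conull set $Z$ disjoint from the countable collection of stable/unstable arcs of $P_1,\ldots,P_N$, using that $\mu$ is non-atomic (from $h_\mu(g) > 0$) and these arcs are $1$-dimensional immersed submanifolds.

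The key observation is that the choice ${\rm Diam}(Q_i(g)) < \varepsilon/(\|Dg\|_{\rm sup}^{C_0}+1)$ forces the trapping to persist for $C_0$ more iterates: for every $0 \le k \le C_0$ the set $W^u(g^{j+k}(x)) = g^k(W^u(g^j(x)))$ has diameter at most $\|Dg\|_{\rm sup}^{C_0} \cdot {\rm Diam}(Q_i(g)) < \varepsilon$. Hence for any $y' \in W^u_{\rm loc}(x_0)$ lying on the same unstable leaf as a base point $x_0$, one has $d(g^{j+k}(x_0), g^{j+k}(y')) < \varepsilon$ whenever $j$ is a trapping time of $x_0$; moreover $x_0$ and $y'$ share the same trapping sequence, since $g^j(y') \in W^u(g^j(x_0)) \subset Q_{i}(g)$ is automatic. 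Using $\mu(\bigcup_i Q_i(g)) > \alpha/2$ from \eqref{eq:ChooseU}, Birkhoff's theorem applied to $\mathbf{1}_{\bigcup_i Q_i(g)}$ shows that the trapping-time set $T(x) := \{j \ge 0 : g^j(x) \in \bigcup_i Q_i(g)\}$ has lower density at least $\alpha/2$ for $\mu$-a.e.\ $x$, and hence the free set $U_n := [0,n] \setminus \bigcup_{j \in T(x_0) \cap [0,n]}[j, j+C_0]$ satisfies $|U_n| \le (1 - \alpha/2)n + o(n)$.

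By Fubini together with $\mu(K_0)=0$, I pick $x_0 \in K$ with $\mu_{\xi(x_0)}(W^u_{\rm loc}(x_0) \cap K \cap Z) > 0$, restricting further via Egorov to a positive-measure sub-leaf on which Birkhoff holds uniformly. For any two points in this sub-leaf, the previous paragraph shows that their $(n,\varepsilon)$-separation can occur only at times in $U_n$. Covering the orbit profiles over the maximal free subintervals by iterated $(N_0, \varepsilon)$-Bowen balls of $g$ via \eqref{e.bound-U}, the number of $(n,\varepsilon)$-separated points in this sub-leaf is at most $e^{(h_{\rm top}(f) + \delta)(1 - \alpha/2)n + o(n)}$. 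On the other hand, Proposition~\ref{Pro:metric-entropy-K} bounds the same quantity from below by $e^{(h_\mu(g) - 2\delta)n - o(n)}$. Passing to the limit gives $h_\mu(g) \le (1-\alpha/2)h_{\rm top}(f) + 3\delta$, contradicting $h_\mu(g) > h > 4\delta + (1-\alpha/2)h_{\rm top}(f)$ from \eqref{eq:deltah1} and \eqref{eq:hU'}. The stable-manifold version is obtained by running the same argument for $g^{-1}$.

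The main obstacle is the upper bound in this counting step. Because $U_n$ is a union of $O(n)$ intervals separated by trapped blocks of length at most $C_0$, one must match orbit profiles across those trapped blocks without incurring $e^{\Omega(n)}$ slack per transition. The choice \eqref{eq:C0}, $C_0 > 2h_{\rm top}(f) N_0/\delta$, is calibrated precisely so that the extra complexity added by each trapped-block transition is absorbed into $o(n)$ in the exponent, which allows the effective length $|U_n| \le (1-\alpha/2)n$ to dominate the bound and produce the desired contradiction.
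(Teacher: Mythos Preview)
Your proposal is correct and follows essentially the same strategy as the paper's proof: Birkhoff gives visit density $>\alpha/2$ to the quadrilaterals, the diameter bound propagates trapping for $C_0$ further iterates, and the free/trapped time decomposition combined with Proposition~\ref{Pro:metric-entropy-K} yields the entropy contradiction. One minor correction: the transition cost across trapped blocks is not $o(n)$ but $O(\delta n)$ (there are at most $n/C_0+1$ free intervals, each contributing a boundary factor $\le e^{\delta C_0}$ via \eqref{e.bound-U} and \eqref{eq:C0}), which turns your $3\delta$ into $4\delta$ --- but the contradiction with $h_\mu(g)>h>4\delta+(1-\tfrac{\alpha}{2})h_{\rm top}(f)$ from \eqref{eq:hU'} still goes through.
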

 
\begin{proof}
 We refer to Section~\ref{Sec:Ideas} for a comparison with the approach of Buzzi-Crovisier-Sarig \cite{BCS22}.
 We take $K'=K\cap\big(\bigcup_{j=1}^N Q_j(g)\big)$. 
 Since $\mu(K)>1-\alpha/2$ and $\mu(\bigcup_{j=1}^N Q_i(g))>\alpha/2$, one has that $\mu(K')>0$. 
 By the Birkhoff ergodic theorem, there exist $K''\subset K'$ and $n_0\in \NN$ such that $\mu(K'')>0$ and
 $$\forall n\geq n_0,~\forall x\in K'',~\# \left\{0\leq k<n: g^k(x)\in  \bigcup_{j=1}^N Q_i(g)\right\}>\frac{n\alpha}{2}.$$
 We now take $K_0\subset K''$ such that for every $x\in K_0$, we have  $\mu_{\xi(x)}(K''\cap W^u_{\loc}(x))>0 $.
 
 We will prove the proposition by contradiction. 
 We assume that there exists a point $x_0\in K_0$ such that for every $n\in \NN$, $g^n(W^u(x_0))$ is either totally contained in one of the $su$-quadrilaterals or disjoint from all these $su$-quadrilaterals. 
 
 By the choice of $K''$ and Proposition \ref{Pro:metric-entropy-K}, we have
 \begin{equation}\label{eq:Prop410}
 	h_{\mu}(g)\leq \liminf_{n\rightarrow +\infty} \frac{1}{n} r_{g}(\varepsilon,n,K''\cap W^u(x_0))+\delta.
 \end{equation}
 Now, we estimate $r_{g}(\varepsilon,n,K''\cap W^u_{\rm loc}(x_0))$.
 For $i=0,1,\cdots,n-1$, if $g^i(x_0)\in  \bigcup_{j=1}^N Q_i(g)$, then $g^i(W^u(x_0)\cap K_0)$ is totally contained in one $su$-quadrilateral.
 In this case, we have
 $${\rm Diam}(g^i(W^u(x_0)\cap K''))<\frac{\varepsilon}{\|Dg\|_{\rm sup}^{C_0}+1}.$$
 Therefore, $g^{i+j}(K''\cap W^u_{\rm loc}(x_0))$ is contained in a ball of size $\varepsilon$ for all $0\le j\le C_0$. Let
 $$J_1:=\{i:g^i(x_0)\in~\textrm{some $su$-quadrilateral}\},~~~\widehat J:=J_1+[0,C_0).$$
 Write $\mathbb N\cup\{0\}\setminus\widehat J$ as a disjoint union of maximal subintervals 
 $$[a_1,b_1]<[a_2,b_2]<\cdots~~~\textrm{with}~a_{s+1}>b_s+C_0,~\forall s\ge 1.$$
 Denote by $L_s$ an $(\varepsilon/2,b_s-a_s)$-spanning set of $M$ with minimal cardinality. 
 Recall the choice of $\varepsilon$, $N_0$ and $C_0$ (see \eqref{e.q-norm}, \eqref{e.bound-U} and \eqref{eq:C0}). 
 For all $k=sN_0+\tau>0$ with $\tau\in [0,N_0)$, we have
 $$r_g(\varepsilon,k,K''\cap W^u_{\rm loc}(x_0))\leq r_g(\varepsilon,k)\leq e^{(s+1)N_0 (h_{\rm rop}(f)+\delta)}\leq  e^{(\frac{k}{N_0}+1)N_0(h_{\rm rop}(f)+\delta)}\leq e^{k(h_{\rm rop}(f)+\delta)+\delta C_0}.$$
 As discussed above,
 $$\# L_s\le {\rm e}^{\delta C_0}\cdot\exp\big((h_{\rm top}(f)+\delta)(b_s-a_s)\big).$$
 Consider $n=b_t$ for some large $t$. By the Birkhoff ergodic theorem, we have
 $$\sum_{i=1}^t(b_i-a_i+1)<(1-\frac{\alpha}{2})n,~~~\textrm{and}~~~t<\frac{n}{C_0}+1.$$
 Thus, we obtain
 \begin{align*}
 r_{g}(\varepsilon,n,K''\cap W^u_{\rm loc}(x_0))&\le\prod_{i=1}^t\# L_i\le \prod_{i=1}^t{\rm e}^{\delta C_0}\cdot\exp\big((h_{\rm top}(f)+\delta)(b_s-a_s)\big)\\
 &\le {\rm e}^{n\delta+\delta C_0}\exp\big((1-\frac{\alpha}{2})(h_{\rm top}(f)+\delta)n\big).
 \end{align*}
 This implies that
 $$\liminf_{n\to\infty}\frac{1}{n}\log r_{g}(\varepsilon,n,K''\cap W^u_{\rm loc}(x_0))\le 2\delta+(1-\frac{\alpha}{2})h_{\rm top}(f).$$
 By combining inequality \eqref{eq:Prop410}, we deduce that $h_\mu(g)\leq h$ (see \eqref{eq:hU'} for $h$).
 Thus, we obtain a contradiction.
 \end{proof}
\begin{Remark}\label{Re:stable}
	By considering the dynamics of $g^{-1}$ and reducing $K_0$ if necessary, we can assume that $\mu(K_0)>0$ and that for every $x\in K_0$, there exist $n:=n_x\geq 0$ and an $su$-quadrilateral $Q_{i_x}(g)$ such that  $g^{-n}(x)$ is contained in the interior of $Q_{i_x}(g)$ and  $W^s(g^{-n}(x))$ intersects both  $Q_{i_x}(g)$ and $M\setminus \overline{ Q_{i_x}(g)}$.
\end{Remark}
 
 \subsection{Unstable lamination of $\mu$ and stable manifold of $O_g$}
In this subsection, we use that the ergodic MME $\mu$ has an unstable lamination with a large transverse dimension to find a transverse intersection
 \begin{equation}\label{eq-mu-prec-O}
    W^u(x_0)\pitchfork W^s(P_i) \ne\emptyset
  \end{equation}
for any $x_0\in K$ and some periodic orbit $P_i$ corresponding to some $su$-quadrilateral $Q_i$ from Lemma~\ref{Lem:quadrilateral}. Since $\mu(K_0)>0$, this will give: $\mu\preceq O_g$.

Recall that, by  Proposition~\ref{Pro:topological-cross}, for each $x_0\in K_0$ there exist integers $n>0$ and $i_0$ such that $W^u(g^n(x_0))$ {topologically crosses the boundary of one $su$-quadrilateral} $Q_{i_0}$.
Since the entropy of $\mu$ is larger than $h$, by Lemma~\ref{Lem:h-neighborhood}, one has that
$$h_{\mu}(g)\ge h>\frac{\lambda^s(g)}{r}\ge\frac{\lambda^s(\mu,g)}{r}.$$
By the choice of $K$ in Section~\ref{Sec:choose-several}, there is a horseshoe $\Lambda_0$ and a periodic point $p\in\Lambda_0$ such that
\begin{itemize}
 \item $h_{\rm top}(g|_{\Lambda_0})/\lambda^s(g,\Lambda_0)>1/r$, hence the transverse dimension of the unstable lamination of  $\Lambda_0$ is large.
 \item the orbit ${\rm Orb}(p)$ is homoclinically related with $\mu$ and the unstable manifold of ${\rm Orb}(p)$ accumulates to the unstable manifold of $x_0$: ${\rm Closure}(W^u(x_0))\subset {\rm Closure}(W^u({\rm Orb}(p)))$. 
 \end{itemize}
 Since $W^u(g^{n}(x_0))$ topologically crosses the boundary of $Q_{i_0}$, and the unstable manifolds of ${\rm Orb}(p)$ $C^1$-accumulate the unstable manifold of $x_0$, one can select $q\in {\rm Orb}(p)$ such that $W^u(q)$ topologically crosses the boundary of $Q_{i_0}$.
 Since unstable manifolds are equal or disjoint,  $W^u(q)$ topologically crosses at least one of the stable boundary of $Q_{i_0}(g)$, which we write as the image of a $C^r$ curve $\gamma:[-1,1]\rightarrow M$.

 \begin{figure}[htp]
	\centering
	\includegraphics[width=1.0\linewidth]{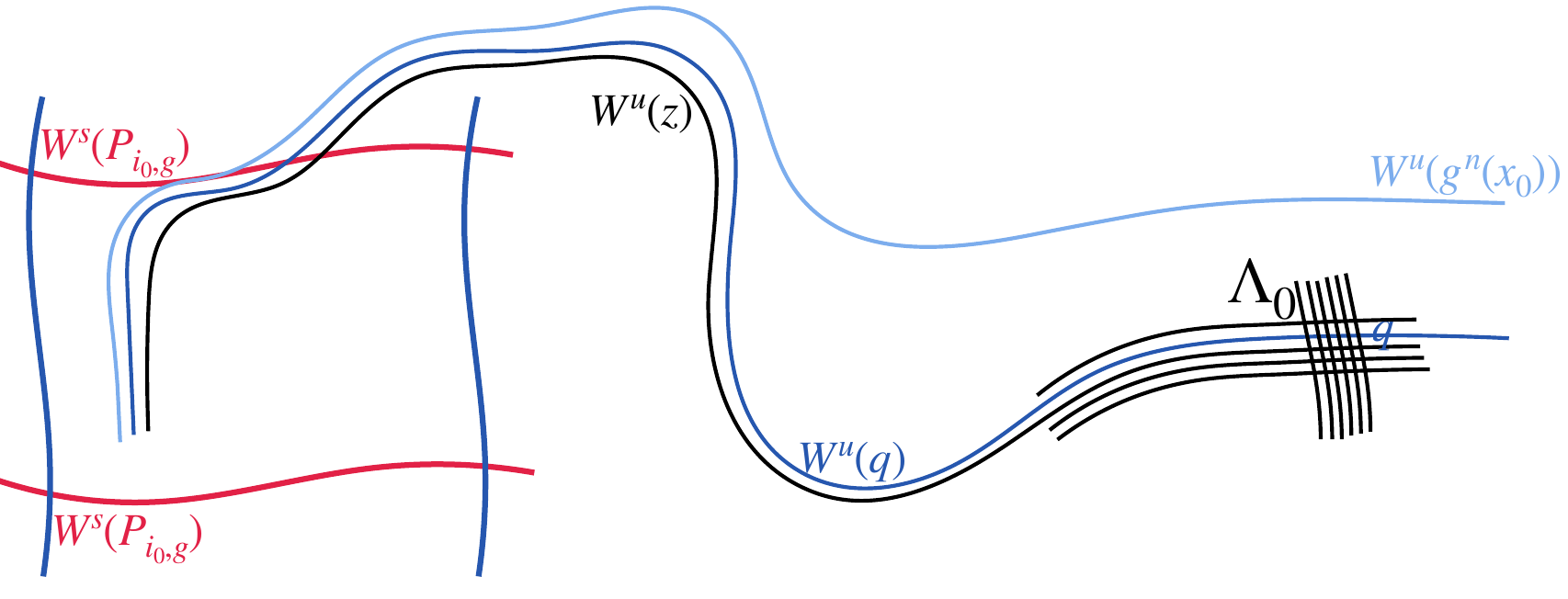}
	\caption{The transverse intersection of $W^u({\rm Orb}(p))$ and  stable boundaries of $Q_{i_0}(g)$ }
	\label{Fig:U}
\end{figure}
 
 By minimality of the unstable lamination of $\Lambda_0$, this is true not only for $p$ but for any $z\in \Lambda_{0}$: the unstable manifold of some iterate of $z$ intersects with $\gamma$. We refer to Figure \ref{Fig:U}. 

 Since the transverse dimension of the unstable lamination of $\Lambda_0$ is large, we can apply Theorem \ref{Thm:basic-set-sard} and deduce that ${\rm Image}(\gamma) \pitchfork W^u(z)$ for any $z\in \Lambda_{0}$. 
 Hence, we conclude that $W^u({\rm Orb}(p))$ intersect $W^s(P_{i_0,g})$ transversely, i.e., \eqref{eq-mu-prec-O} is proved.
 
 \subsection{Stable lamination of the $\mu$ and unstable manifold of $O_g$} \label{SEC:Stable}
 We know that, for any $x_0\in K_0$, for some integers $n>0$ and $i_0$, $W^s(f^{n} x_0)$ topologically crosses the boundary of $Q_{i_0}$. We need to get transversality:
 \begin{equation}\label{eq-O-prec-mu}
    W^s(x_0)\pitchfork W^u(P_i) \ne\emptyset.
  \end{equation}
In general,  the stable lamination of $\mu$ can have a small transverse dimension. 
For this reason, we use the additional information from Lemma~\ref{Lem:quadrilateral} (in comparison to \cite[Proposition 2.19]{BCS22}): the unstable boundary of the $su$-quadrilaterals $Q_1,\dots,Q_N$ is accumulated by the unstable lamination of the horseshoe $\Lambda$. 
Thus $W^s(f^{-n}x_0)$ will have to topologically cross not just the unstable boundary of $Q_i$, but this unstable lamination. 
We will conclude by noting that the unstable lamination of $\Lambda$ has large transverse dimension.
 
By Proposition~\ref{Pro:topological-cross} and Remark~\ref{Re:stable}, we know that there is a measurable set $K_0$ with $\mu(K_0)>0$ such that for every point $x\in K_0$, there is an integer $n\ge 0$ and an $su$-quadrilateral $Q_{i_0}(g)$ associated to $P_{i_0,g}$ such that 
 \begin{itemize}
 \item $g^{-n}(x)$ is contained in the {open disk} $Q_{i_0}(g)$;
\item  $W^s(g^{-n}(x))$ intersect both  $Q_{i_0}(g)$ and $M\setminus {\overline{Q_{i_0}(g)}}$.
 \end{itemize}

Since $O$ is homoclinically related with the measure $\nu$ of maximal entropy of $f$, by Lemma \ref{Lem:quadrilateral}, there is a horseshoe $\Lambda$ of $f$ such that $O\subset\Lambda$ and 
$$\frac{h_{\rm top}(f|_\Lambda)}{\lambda^s(\Lambda,f)}>\frac{1}{r}.$$

\begin{Claim}
	There is a $C^r$ neighborhood $\mathcal U_{\rm dim}$ of $f$ such that for any $g\in \mathcal U_{\rm dim}$, one has that
	\begin{equation}\label{eq:Udimen}
		\frac{h_{\rm top}(g|_{\Lambda_g})}{\lambda^s(\Lambda_g,g)}>1/r,
	\end{equation}
	where $\Lambda_g$ is the hyperbolic continuation of $\Lambda$ with respect to $g$.
\end{Claim}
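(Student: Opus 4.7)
The plan is to exploit the structural stability of the hyperbolic basic set $\Lambda$ supplied by Lemma~\ref{Lem:quadrilateral}. For $g$ sufficiently $C^1$-close (hence $C^r$-close) to $f$, the continuation $\Lambda_g$ exists and is topologically conjugate to $\Lambda$, so $h_{\rm top}(g|_{\Lambda_g})$ is \emph{exactly} preserved, while $\lambda^s(\Lambda_g,g)$ can only increase by an arbitrarily small amount under a standard upper semi-continuity argument. The strict inequality $h_{\rm top}(f|_\Lambda)/\lambda^s(\Lambda,f) > 1/r$ then propagates to all $g$ close to $f$.

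\textbf{Step 1 (Continuation and preservation of entropy).} By structural stability of hyperbolic basic sets, there is a $C^1$ neighborhood $\mathcal V_1$ of $f$ such that for every $g \in \mathcal V_1$ the continuation $\Lambda_g$ exists and $g|_{\Lambda_g}$ is topologically conjugate to $f|_\Lambda$ via a Hölder homeomorphism $h_g : \Lambda \to \Lambda_g$ depending continuously on $g$. Topological entropy being a conjugacy invariant, $h_{\rm top}(g|_{\Lambda_g}) = h_{\rm top}(f|_\Lambda)$ for every $g \in \mathcal V_1$. In addition, $\Lambda_g$ varies continuously in the Hausdorff topology.

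\textbf{Step 2 (Upper semi-continuity of $\lambda^s$).} The invariance of $\Lambda_g$ under $g$ together with the chain rule yield the subadditivity
\begin{equation*}
\log \sup_{x \in \Lambda_g} \|Dg^{-(n+m)}(x)\| \;\le\; \log \sup_{x \in \Lambda_g} \|Dg^{-n}(x)\| + \log \sup_{x \in \Lambda_g} \|Dg^{-m}(x)\|,
\end{equation*}
so that $\lambda^s(\Lambda_g,g) = \inf_n \tfrac{1}{n} \log \sup_{x \in \Lambda_g} \|Dg^{-n}(x)\|$. For each fixed $n$, the Hausdorff continuity of $g \mapsto \Lambda_g$ and the joint continuity of $(g,x)\mapsto \|Dg^{-n}(x)\|$ make $g \mapsto \log \sup_{x \in \Lambda_g}\|Dg^{-n}(x)\|$ continuous at $f$. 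As an infimum of continuous functions, $g \mapsto \lambda^s(\Lambda_g,g)$ is therefore upper semi-continuous at $f$.

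\textbf{Step 3 (Conclusion).} Set $\eta := (r\,h_{\rm top}(f|_\Lambda) - \lambda^s(\Lambda,f))/2$, which is strictly positive by Lemma~\ref{Lem:quadrilateral}. Step~2 yields a $C^r$ neighborhood $\mathcal U_{\rm dim} \subset \mathcal V_1$ of $f$ such that $\lambda^s(\Lambda_g,g) < \lambda^s(\Lambda,f) + \eta$ for every $g \in \mathcal U_{\rm dim}$. Combining with Step~1 gives
\begin{equation*}
\frac{h_{\rm top}(g|_{\Lambda_g})}{\lambda^s(\Lambda_g,g)} \;=\; \frac{h_{\rm top}(f|_\Lambda)}{\lambda^s(\Lambda_g,g)} \;>\; \frac{h_{\rm top}(f|_\Lambda)}{\lambda^s(\Lambda,f)+\eta} \;>\; \frac{1}{r}
\end{equation*}
by the choice of $\eta$. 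I do not anticipate a substantive obstacle; the only nontrivial input is the Hausdorff continuity of the continuation map $g \mapsto \Lambda_g$, which is a standard consequence of the cone field / local product structure characterization of $\Lambda$.
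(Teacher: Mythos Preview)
Your proof is correct and follows essentially the same route as the paper: both rely on the conjugacy invariance $h_{\rm top}(g|_{\Lambda_g})=h_{\rm top}(f|_\Lambda)$ together with the upper semi-continuity of $g\mapsto\lambda^s(\Lambda_g,g)$ at $f$. The paper packages this as a short contradiction argument while you argue directly with an explicit $\eta$, and you supply more detail on why $\lambda^s$ is upper semi-continuous (via the subadditive infimum representation), but the underlying content is identical.
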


This is obvious since $\htop(g|\Lambda_g)=\htop(f|\Lambda)$ and $g\mapsto \lambda^s(\Lambda_g,g)$ is continuous.

\medbreak

Take a $C^r$ curve $\gamma:~[-1,1]\to M$  such that ${\rm Image}(\gamma)$ is a compact part of $W^s(g^{-n}(x))$  that topologically crosses the unstable boundary of $Q_{i_0}(g)$.
Recall that the unstable boundary of  $Q_{i_0}(g)$ is contained in $W^u(P_{i,g})$ with $P_{i,g}\subset\Lambda_g$.
Condition \eqref{eq:Udimen} ensures that the horseshoe $\Lambda_g$ has an unstable lamination with transverse dimension strictly larger than $1/r$. 

To prove \eqref{eq-O-prec-mu}, it is enough apply Theorem~\ref{Thm:basic-set-sard} for $g$ close to $f$. 
We only need to verify the following claim.
\begin{Claim}
For any point $z\in\Lambda_g$, $\gamma$ topologically crosses $W^u({\rm Orb}(z),g)$.
\end{Claim}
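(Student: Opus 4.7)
The plan is to show that $\gamma$, as a compact stable arc straddling $Q_{i_0}(g)$, must cross the unstable boundary of $Q_{i_0}(g)$ transversely at some point $w$, and then to propagate this single crossing to every unstable manifold $W^u({\rm Orb}(z))$ with $z\in\Lambda_g$ via the $C^1$-density of the unstable lamination of a transitive basic set.

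First, I would rule out that $\gamma$ meets the stable boundary $\gamma^s_1\cup\gamma^s_2\subset W^s(P_{i_0,g})$. Any intersection point would lie in $W^s(g^{-n}(x))\cap W^s(P_{i_0,g})$, forcing the forward $g$-orbit of $g^{-n}(x)$ to be asymptotic to the periodic orbit $P_{i_0,g}$; since $\mu$ has positive entropy, $\mu$-almost every point (and hence, after a null modification of $K_0$, every point we need to consider) is aperiodic, so this is impossible. Because $\gamma$ enters and exits the open disk $Q_{i_0}(g)$, it must therefore cross the unstable boundary $\gamma^u_1\cup\gamma^u_2\subset W^u(P_{i_0,g})$ at some point $w$. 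At $w$ the tangent line of $\gamma$ lies in $E^s(w)$ (as $\gamma\subset W^s$), while the tangent to the unstable boundary lies in $E^u(w)$, so the crossing is automatically transverse.

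Next, I would exploit the transitivity of $\Lambda_g$. Since $P_{i_0,g}\subset\Lambda_g$ is a periodic orbit in a transitive hyperbolic basic set, its orbit admits transverse homoclinic points inside $\Lambda_g$; combined with the density of the orbit of any $z\in\Lambda_g$ and the inclination lemma recalled in Section~\ref{Sec:homoclinic-relation} (see also \cite[Lemma 2.7]{BCS22}), for any compact disc $D^u\subset W^u(P_{i_0,g})$ containing $w$ in its interior there is a sequence of compact discs $D^u_k\subset W^u({\rm Orb}(z))$ with $D^u_k\to D^u$ in the $C^1$ topology. Because transverse intersections of curves are $C^1$-open and $\gamma$ meets $D^u$ transversely at $w$, for all sufficiently large $k$ the curve $\gamma$ also meets $D^u_k$ transversely in a small neighborhood of $w$, producing the desired point in $\gamma\cap W^u({\rm Orb}(z))$.

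The main obstacle is securing the transverse crossing described above: one must be certain that $\gamma$ actually exits $Q_{i_0}(g)$ across an unstable side, rather than through a corner where the transversality argument would degenerate. The corner case can be avoided by slightly extending or shortening $\gamma$ along $W^s(g^{-n}(x))$, while the stable-side case is excluded by the aperiodicity of $\mu$-typical points. Once this crossing is in hand, the propagation to an arbitrary $z\in\Lambda_g$ is a standard consequence of the $\lambda$-lemma inside a transitive hyperbolic basic set together with the $C^1$-openness of transversal intersections, and yields exactly the hypothesis required to invoke Theorem~\ref{Thm:basic-set-sard}.
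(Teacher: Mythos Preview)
Your proof has a genuine gap at the transversality step. You assert that at the crossing point $w\in\gamma\cap\gamma^u_{i_0}$ the intersection is ``automatically transverse'' because the tangent of $\gamma$ lies in $E^s(w)$ and the tangent of the unstable boundary lies in $E^u(w)$. But these two directions come from different hyperbolic structures: $\gamma$ is a piece of the Pesin stable manifold of the $\mu$-generic point $g^{-n}(x)$, while $\gamma^u_{i_0}\subset W^u(P_{i_0,g})$ carries the uniformly hyperbolic unstable direction of $\Lambda_g$. There is no common invariant splitting at $w$ forcing these to be transverse; ruling out such tangencies is precisely the content of the dynamical Sard theorem (Theorem~\ref{Thm:basic-set-sard}), which is invoked only \emph{after} the Claim is established. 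Without transversality at $w$, your $C^1$-openness step fails: a tangential intersection can disappear under arbitrarily small $C^1$ perturbations of $D^u$, so $D^u_k\to D^u$ in $C^1$ does not by itself guarantee $\gamma\cap D^u_k\neq\emptyset$.

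The paper's proof avoids transversality entirely by a purely topological trapping argument. Your first step (that $\gamma$ is disjoint from $W^s(P_{i_0,g})$) is correct and is also used there. From it one knows $\gamma$ passes from one side of the unstable arc $\gamma^u_{i_0}$ to the other. One then builds a thin auxiliary quadrilateral $Q_y$ bounded by $\gamma^u_{i_0}$, a nearby arc $\gamma^u_y\subset W^u(y)$ for some $y\in\Lambda_g$, and two stable arcs of $P_{i_0,g}$; by construction $\gamma$ joins the two unstable sides of $Q_y$ without meeting its stable sides. For any $z\in\Lambda_g$, the unstable manifolds of ${\rm Orb}(z)$ accumulate on those of ${\rm Orb}(y)$ and hence contain arcs that cut $Q_y$ into two smaller quadrilaterals, separating its two unstable sides; $\gamma$ is then forced to meet such an arc. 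This Jordan-curve style argument needs only topological crossing. Your approach can be repaired in the same spirit---topological crossing is stable under $C^0$ perturbation of the crossed arc, provided one controls the endpoints via a quadrilateral structure as above---but that is not the argument you wrote.
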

\begin{proof}[Proof of the claim]
Consider a point $y\in\Lambda_g$ such that $W^u(y,g)$ accumulates on $W^u(P_{i_0,g},g)$. 
Since $\gamma$ topologically crosses $W^u(P_{i_0,g},g)$, it must topologically cross $W^u(y)$.

Since $\gamma$ has no intersection with the stable manifold of $P_{i_0,g}$, we choose $\gamma^u_{i_0}$ to a connected component of the unstable boundary of $Q_{i_0}(g)$ such that $\gamma$ intersects both the left and right sides of $\gamma^u_{i_0}$. 
Therefore, one can take
\begin{itemize}
\item $\gamma_1^s$ and $\gamma_2^s$ {the two stable boundary segments} of $P_{i_0,g}$
\item $\gamma^u_y$ from the unstable manifold of $y$ which is very close to $\gamma^u_{i_0}$.
\end{itemize}

\begin{figure}[htp]
	\centering
	\includegraphics[width=1.0\linewidth]{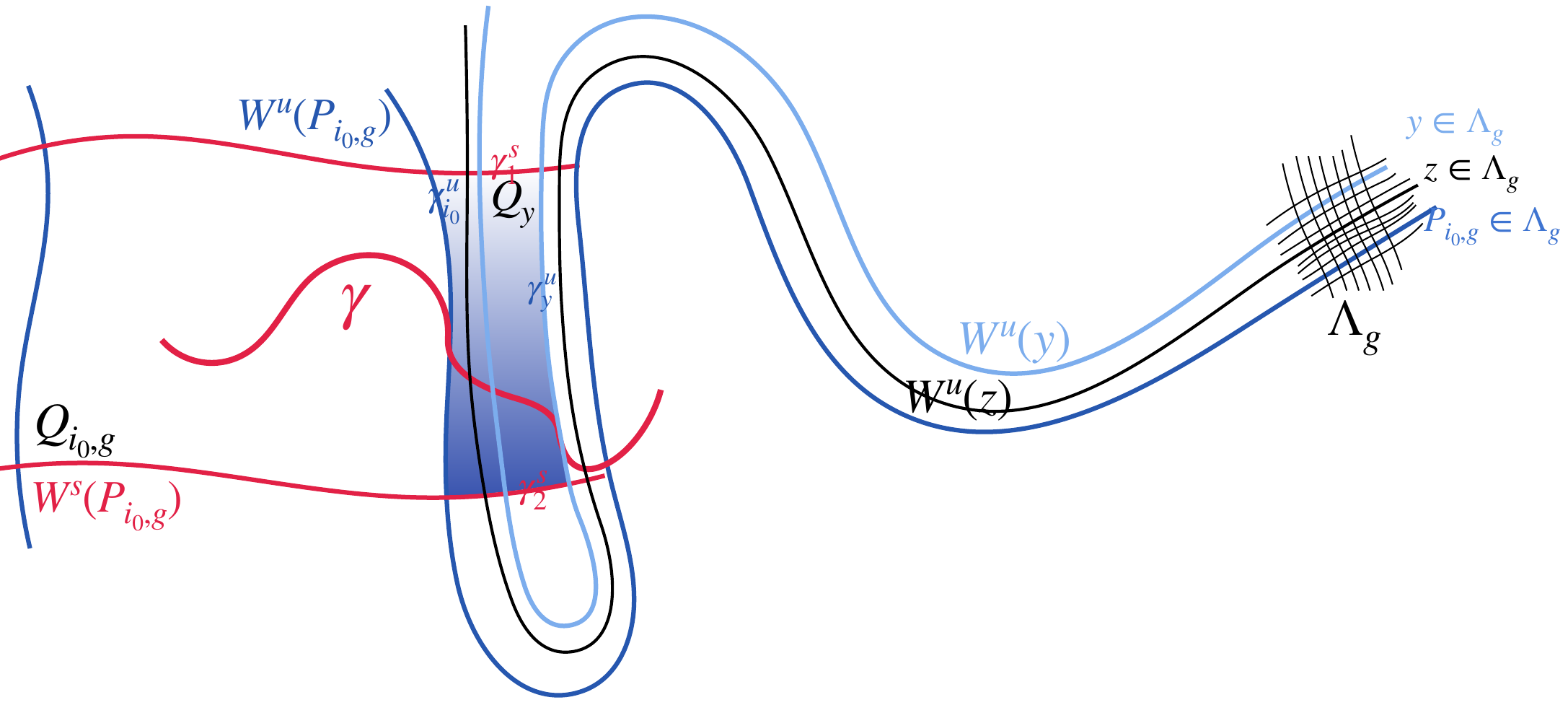}
	\caption{Small $su$-quadrilateral $Q_y$ bounded by $\gamma^u_{i_0}$, $\gamma^u_y$ and $\gamma_1^s$, $\gamma_2^s$.}
	\label{Fig:S}
\end{figure}

This allows us to define a small $su$-quadrilateral $Q_y$ bounded by $\gamma^u_{i_0}$, $\gamma^u_y$ and $\gamma_1^s$, $\gamma_2^s$. 
It follows that $\gamma$ intersects the two connected components of the unstable boundary $\gamma^u_{i_0}\cup \gamma^u_y$ of $Q_y$. Moreover, one can assume that the unstable manifold of $y$ contains many connected components that are contained in $Q_y$ and accumulate on $\gamma^u_{i_0}$. 
We refer to Figure \ref{Fig:S}. 

For any point $z\in\Lambda_g$, the unstable manifolds of ${\rm Orb}(z)$ accumulate the unstable manifolds of ${\rm Orb}(y)$. 
Thus, the unstable manifolds of ${\rm Orb}(z)$ intersect $Q_y$ and one part of the unstable manifolds of ${\rm Orb}(z)$ cuts $Q_y$ into two disjoint connected smaller $su$-quadrilaterals. 
Thus, $\gamma$ intersects the unstable manifolds of ${\rm Orb}(z)$. 
This completes the proof of the claim.
\end{proof}

By Theorem~\ref{Thm:basic-set-sard}, one has that $\gamma\pitchfork W^u({\rm Orb}(y))\neq\emptyset$ for any point $y\in\Lambda$. Since $P_{i_0,g} \subset\Lambda_g$, one has that $\gamma$ intersects $W^u(P_{i_0,g})$ transversely.
Thus, we conclude that $W^s(x_0)$ intersects $W^u(P_{i_0,g})$ transversely.

From the discussion in Section \ref{Sec:MRT}, we have completed the proof of Theorem \ref{Thm:main-reduction}.

\section{Proof of the Main Theorems and Constructions}\label{Sec:ProofMainThms}

We deduce our main results, Theorems \ref{Thm:uniform-finite} to \ref{Thm:HomoRel}, from Theorem~\ref{Thm:main-reduction}.

\subsection{Proof of Theorem~\ref{Thm:HomoRel} and Corollary~\ref{Cor:LimitDisjoint}}
We establish the relevant consequences of Theorem~\ref{Thm:main-reduction}.

\begin{proof}[Proof of Theorem~\ref{Thm:HomoRel}]
Let $f_n\to f$ and $\mu_n\to\mu_0$ be as in the assumptions  of the theorem.
Since $\htop(f)=\lim_n h_{\mu_n}(f_n)$, Theorem~\ref{Thm:continuity-Lyapunov-exponent} implies that  $\mu_0=\lim_n\mu_n$ is an MME of $f$.
In particular, the ergodic decomposition of $\mu_0$ is carried by the ergodic MMEs. Now, there are only finitely many such measures and they are all hyperbolic. Let $\nu$ be one of them and let $O$ be a hyperbolic periodic orbit with $\nu\homrel{f}O$. Letting $O_n$ be the hyperbolic continuation of $O$, Theorem~\ref{Thm:main-reduction} shows that $\mu_n\homrel{f_n}O_n$ for all large $n$.
\end{proof}

Our proofs of Theorems A, B, and C will rely on the following consequences of Theorem~\ref{Thm:HomoRel}. In fact, we will no longer need to refer Theorems \ref{Thm:HomoRel} or \ref{Thm:main-reduction} once we have Corollaries \ref{Cor:LimitDisjoint}~and~\ref{Cor:LimitDisjointMME}.

\begin{Corollary}\label{Cor:LimitDisjoint}
Let $f$ be a $C^r$ surface diffeomorphism satisfying \eqref{eqLargeEntropy} {\rm(}for some $r>1${\rm)}. 
Let $f_n\to f$ in the $C^r$ topology. 
For each $n \ge 1$, let $m^1_n,\dots,m^k_n$ be ergodic, hyperbolic measures satisfying, for each $1\le i\le k$,
 \begin{itemize}
  \item[(*)]  $m^i_n$ is not homoclinically related to any $m^j_n$ with $j\in\{1,\dots,k\}\setminus\{i\}$;
  \item[(**)] $\lim_n h_{m^i_n}(f_n) = \htop(f)$;
  \item[(***)] the weak$\ast$-limit $\mu^i:=\lim_n m^i_n$ exists.
\end{itemize}
Then each limit $\mu^i$ is an MME and any pair of measures $\mu^i$ and $\mu^j$ {\rm(}$i\ne j${\rm)} are mutually singular. 
In particular, denoting by $\mathcal E^i$ the finite support of the ergodic decomposition of $\mu^i$, we have:
 \begin{equation}\label{eq:EqClass}
    k\le \sum_{i=1}^k \#\mathcal E^i\le N_f.
  \end{equation}
\end{Corollary}

\begin{proof}
Let $f_n\to f$, $k\ge1$, and $m^i_n$ be as in the assumptions above.
Together with (**), Theorem~\ref{Thm:continuity-Lyapunov-exponent} implies that each $\mu^i$ is an MME. 
Let us prove that $\mu^i=\lim_n m^i_n$ and $\mu^j=\lim_n m^j_n$ are mutually singular for any pair $1\le i\ne j\le k$. 

Let $\nu,\nu'$ belong to the supports of the ergodic decompositions of $\mu^i$ and $\mu^j$, respectively. 
Let $O,O'$ be hyperbolic periodic orbits of $f$ with the following homoclinc relations: $O\homrel{f}\nu$ and $O'\homrel{f}\nu'$. 
Together with (**), Theorem~\ref{Thm:HomoRel} implies that the hyperbolic continuations $O_n,O_n'$ satisfy: $O_n\homrel{f_n} m^i_n$ and $O'_n\homrel{f_n} m^j_n$. 
Now, if $\nu$ and $\nu'$ were equal,  $O$ and $O'$ would be homoclinically related. 
Hence, by robustness of the homoclinic relation among hyperbolic periodic orbits, we would get that, for all large $n$, $O_n \homrel{f_n} O'_n$  and therefore $m^i_n \homrel{f_n}  m^j_n$, contradicting assumption~(*). 
The contradiction proves that $\nu\ne\nu'$, hence $\mu^i,\mu^j$ are mutually singular, as claimed.

Since the supports of the ergodic decompositions of the measures $\mu^1,\dots,\mu^k$ are pairwise disjoint, nonempty, and included in the set of ergodic MMEs of $f$, equation \eqref{eq:EqClass} follows.
\end{proof}

We will mostly use the following restriction to the case of ergodic MMEs.

\begin{Corollary}\label{Cor:LimitDisjointMME}
Let $f$ be a $C^r$ surface diffeomorphism satisfying \eqref{eqLargeEntropy} {\rm(}for some $r>1${\rm)}. 
Let $f_n\to f$ in the $C^r$ topology. 
Fix some $k\ge1$. Assume that, for each $n\ge1$, there are $k$ distinct ergodic MMEs: $\mu^1_n,\dots,\mu^k_n$. Assume moreover, that, for each $1\le i\le k$, the weak$\ast$-limit $\mu^i:=\lim_n \mu^i_n$ exists.

Then each limit $\mu^i$ is an MME and any pair of measures $\mu^i$ and $\mu^j$ {\rm(}$i\ne j${\rm)} are mutually singular. 
In particular, denoting by $\mathcal E^i$ the finite support of the ergodic decomposition of $\mu^i$, we have:
 \begin{equation}\label{eq:EqClassMME}
    k\le \sum_{i=1}^k \#\mathcal E^i\le N_f.
  \end{equation}
\end{Corollary}

\begin{proof}
Let $f_n\to f$, $k\ge1$, and $\mu^i_n$ be as in the assumptions above.
We apply Corollary~\ref{Cor:LimitDisjoint} to $m^i_n:=\mu^i$ for $1\le i\le k$. Since these ergodic MMEs are distinct they are not homoclinically related by Theorem~\ref{Thm:BBCS-revisited}, so condition (*) holds. Since the topological entropy is continuous in the large entropy surface setting, it follows that $\lim_n h_{\mu^i_n}(f) = \lim_n \htop(f_n)=\htop(f)$, i.e., (**). The convergence (***) is part of the assumptions. Thus we can apply Corollary~\ref{Cor:LimitDisjoint}.
\end{proof}

\subsection{Proof of Theorem~\ref{Thm:uniform-finite}}
Let $f$ be a $C^r$ diffeomorphism in the large entropy, surface setting. 
It is enough to show the inequality $1\le N_{f_n}\le N_f$ for an arbitrary sequence $f_n\to f$ in the $C^r$ topology. We fix such a sequence.

First, Theorem~\ref{Thm:BBCS-revisited} shows that $N_{f_n}\ge1$. To see that $N_{f_n}\le N_f$ for all large $n$, we assume by contradiction that there is $k>N_f$ and a subsequence such tha $N_{f_{n_i}}\ge k$ for all $i$.
Then Corollary~\ref{Cor:LimitDisjointMME} and especially \eqref{eq:EqClassMME} show that $k\le N_f$, a contradiction.

Theorem~\ref{Thm:uniform-finite} is proved. \qed

\subsection{Proof of Theorem~\ref{Thm:equal}}
Let $f_n \to f$ be as in the assumptions of the theorem. 
We first assume item (1), i.e., $N_f=N_{f_n}$ for all $n\ge1$, and deduce item (2).

Corollary~\ref{Cor:LimitDisjointMME} applied to the ergodic MMEs $m^1_n,\dots,m^k_n$ ($k:=N_f$)  yields mutually singular, hence distinct MMEs $\mu^i=\lim_n m^i_n$, $1\le i\le k$.
Since $k=N_f$, \eqref{eq:EqClassMME} becomes $k=\sum_{i=1}^k\#\mathcal E^i$. 
It follows that $\#\mathcal E^1=\dots=\#\mathcal E^k=1$. 
This means that each $\mu^i$, $1\le i\le k$, is ergodic. Since $k=N_f$ we have all the ergodic MMEs of $f$. 
Item (2) is proved.

\medbreak

We now assume that item (1) fails and deduce that item (2) also fails. 
Thus, maybe after passing to some subsequence, for all $n$, $N_{f_n}$ is equal to some integer $k<N:=N_f$ for all  $n$. 
But $N$ distinct ergodic MMEs of $f$ cannot be obtained as limits of $k$ distinct ergodic MMEs of $f_n$: item (2) fails. This proves Theorem~\ref{Thm:equal}. \qed

\subsection{Proof of Theorem~\ref{Thm:ergodicLimit}}

We let $f_n \to f$, $\mu$, and $(\mu_n)$ as in the assumptions of the theorem. 
We set $m^1_n:=\mu_n$. 
Let $k:=N_f$. 
By assumption, $k=N_{f_n}$ for all $n\ge1$.
By Theorem~\ref{Thm:BBCS-revisited}, the $k$ ergodic MMEs of each $f_n$ are pairwise non homoclinically related. 
Since at most one can be homoclinically related to $\mu_n$, one can find $k-1$ ergodic MMEs $m^2_n,\dots,m^k_n$ of $f_n$ such that, for all $1\le i\ne j\le k$, $m^i_n\not\homrel{f_n}m^j_n$. 
Note that $\lim_n h_{m^i_n}(f_n)=\htop(f)$ by assumption for $i=1$, since the topological entropy is continuous for $i\ge2$. 

Let us check the assumptions (*)- (**)-(***) of Corollary~\ref{Cor:LimitDisjoint}.
We already know for (*) and (**). 
To ensure (***) it is enough to pass to a subsequence (this does not change the limit $\mu=m^1=\lim_n m_n^1$). 
Thus Corollary~\ref{Cor:LimitDisjoint} applies, showing that all weak$\ast$-limits $m^1,\dots,m^k$ are MME whose ergodic decompositions satisfy equation~\eqref{eq:EqClass}. 
Since $k=N_f$, this equation implies that each $\mathcal E^i$ has single element, i.e., each $m^i$ is ergodic. 
In particular $\mu=m^1$ is ergodic.
Theorem~\ref{Thm:ergodicLimit} is proved.\qed

\subsection{Construction of examples}\label{Sec:Construction}

We prove the following slight strenthening of Proposition~\ref{Prop:Construction}:

\begin{Proposition}\label{Prop:Example-detailed}
Given any two integers $1\le m\le n$ and any surface, there is a smooth family of $C^\infty$-diffeomorphisms $f_t$, $|t|<\epsilon$, of that surface such that $N_{f_0}=n$ and $N(f_t)=m$ for all $t>0$. 
Moreover, the family $(f_t)$ satisfies property (*) in Theorem~\ref{Thm:ergodicLimit} even when $m<n$. 
\end{Proposition}

\medbreak

We first show that there exists a family $(g_t)_{0\le t<\epsilon}$ such that:
 \begin{enumerate}
  \item $g_0$ contains a basic piece $\Lambda_0$ which carries the unique MME;
  \item $\Lambda_0$ presents a first homoclinic tangency;
  \item for all $0\le t<\epsilon$, $g_t$ has a unique MME;
  \item for all $t>0$, $\htop(g_t)>\htop(g_0)>0$.
\end{enumerate}

Indeed, the starting point of Palis-Yoccoz \cite[page 3]{Palis} is a $C^\infty$ diffeomorphism of a disk, which is the identity near the boundary and satisfies the above items 1 and 2.  
Item 3 follows from Theorem~\ref{Thm:uniform-finite} and the fact that $N_{g_0}=1$ (perhaps after reducing $\epsilon$).

It remains to prove the last item. for $t>0$, the hyperbolic continuation $\Lambda_t$ of $\Lambda_0$ has a transverse homoclinic intersection and therefore is contained in a strictly larger horseshoe. 
By the analoguous result for transitive subshifts of finite type, the larger horseshoe must have larger entropy. 
This proves item 4, maybe after reducing again $\epsilon>0$.

We have a family $(g_t)_{0\le t<\epsilon}$ with properties (1)-(4). 
We now build the example.
Give an arbitrary surface $S$ we define a family $f_t:S\to S$ in the following way.
We find disjoint closed disks $D_1,\dots,D_n\subset S$. We let $f_t$ be the identity on $S\setminus (D_1\cup\dots\cup D_n)$. 
We let $f_t=g_t$ on $D_i$ for $1\le i\le m$. We let $f_t=g_0$ on $D_i$ for $m<i\le n$. 
 
 \medbreak
 
Finally we observe that each ergodic MME of $f_t$ converges to an ergodic MME of $f_0$, i.e., Theorem~\ref{Thm:ergodicLimit} holds, irrespective of the equality $N_f=N_{f_t}$.
\qed

\vskip 5pt

\flushleft{\bf J\'{e}r\^{o}me Buzzi} \\
\small Laboratoire de Math\'ematiques d'Orsay,  CNRS - UMR 8628, Universit\'e Paris-Saclay,  Orsay 91405, France\\
\textit{E-mail:} \texttt{jerome.buzzi@universite-paris-saclay.fr}\\

\flushleft{\bf Chiyi Luo} \\
\small School of Mathematics and Statistics, 
Jiangxi Normal University, Nanchang,   330022, P. R. China\\
\textit{E-mail:} \texttt{luochiyi98@gmail.com}\\

\flushleft{\bf Dawei Yang} \\
\small School of Mathematical Sciences,  Soochow University, Suzhou, 215006, P.R. China\\
\textit{E-mail:} \texttt{yangdw@suda.edu.cn}\\
\end{document}